\definecolor{darkblue}{rgb}{0.0, 0.0, 0.55}
\renewcommand{\qedsymbol}{\rule[.12ex]{1.2ex}{1.2ex}}
\def\ds{\displaystyle}
\DeclareMathOperator{\Span}{span}
\DeclareMathOperator{\Rank}{rank}
\DeclareMathOperator{\Pos}{Pos}
\DeclareMathOperator{\Sq}{Sos}
\DeclareMathOperator{\Vol}{Vol}
\DeclareMathOperator{\conv}{conv}
\DeclareMathOperator{\codim}{codim}
\DeclareMathOperator{\GL}{GL}
\DeclareMathOperator{\SO}{SO}
\DeclareMathOperator{\sq}{sq}
\DeclareMathOperator{\Tr}{tr}
\DeclareMathOperator{\Jac}{Jac}
\DeclareMathOperator{\grad}{grad}
\newcommand{\crp}{cross-positive\xspace}
\newcommand{\ccrp}{completely cross-positive\xspace}
\newcommand{\expp}{exponentially-positive\xspace}
\newcommand{\cexpp}{completely exponentially-positive\xspace}
\newcommand{\exotic}{proper\xspace}
\newcommand{\sa}[1]{\mathbb S_{#1}(\RR)}
\newcommand{\psd}[1]{M_{#1}(\RR)_{\succeq0}}
\newcommand{\beq}{\begin{equation}}
\newcommand{\eeq}{\end{equation}}
\newcommand{\ben}{\begin{enumerate}}
\newcommand{\een}{\end{enumerate}}
\numberwithin{equation}{section}
\newcommand{\x}{{\tt x}}
\newcommand{\z}{{\tt z}}
\newcommand{\w}{{\tt w}}
\newcommand{\vv}{{\tt v}}
\newcommand{\PHI}{\underline{\phi}}
\newcommand{\PSI}{\underline{\psi}}
\newcommand{\ur}{{\underline r}}
\newcommand{\RR}{\mathbb R}
\newcommand{\NN}{\mathbb N}
\newcommand{\ZZ}{\mathbb Z}
\newcommand{\CC}{\mathbb C}
\newcommand{\cQ}{\mathcal Q}
\newcommand{\sym}{\mathbb S}
\newcommand{\y}{{\tt y}}
\newcommand{\e}{e}
\newcommand{\cU}{\mathcal U}
\newcommand{\cH}{\mathcal H}
\newcommand{\cK}{\mathcal K}
\newcommand{\PP}{\mathbb P}
\newcommand{\dd}{{\rm d}}
\newcommand{\cM}{\mathcal M}
\newtheorem{theorem}{Theorem}[section]
\newtheorem{corollary}[theorem]{Corollary}
\newtheorem{cor}[theorem]{Corollary}
\newtheorem{thm}[theorem]{Theorem}
\newtheorem{lemma}[theorem]{Lemma}
\newtheorem{lem}[theorem]{Lemma}
\newtheorem{proposition}[theorem]{Proposition}
\newtheorem{prop}[theorem]{Proposition}
\newtheorem{algorithm}[theorem]{Algorithm}
\theoremstyle{definition}
\newtheorem{remark}[theorem]{Remark}
\newcommand{\pushright}[1]{\ifmeasuring@#1\else\omit\hfill$\displaystyle#1$\fi\ignorespaces}
\newcommand{\pushleft}[1]{\ifmeasuring@#1\else\omit$\displaystyle#1$\hfill\fi\ignorespaces}
\author[I. Klep]{Igor Klep${}^{1,Q}$}
\address{Igor Klep, Faculty of Mathematics and Physics,
 University of Ljubljana \& 
Faculty of Mathematics, Natural Sciences
and Information Technologies
University of Primorska, Koper
\&
 Institute of Mathematics, Physics and Mechanics, Ljubljana, Slovenia}
\email{igor.klep@fmf.uni-lj.si}
\thanks{${}^1$Supported by the 
Slovenian Research Agency program P1-0222 and grants 
J1-50002, J1-8132, J1-2453, N1-0217, J1-3004, J1-60011.}
\author[K. \v Sivic]{Klemen \v Sivic${}^2$}
\address{Klemen \v Sivic, Faculty of Mathematics and Physics, University of Ljubljana  \& Institute of Mathematics, Physics and Mechanics, Ljubljana, Slovenia}
\email{klemen.sivic@fmf.uni-lj.si}
\thanks{${}^2$Supported by the Slovenian Research Agency program P1-0222 and grants J1-8132, N1-0217, J1-2453, J1-3004, J1-60011.}
\author[A. Zalar]{Alja\v z Zalar${}^{3,Q}$}
\address{Alja\v z Zalar, 
Faculty of Computer and Information Science, University of Ljubljana  \& 
Faculty of Mathematics and Physics, University of Ljubljana  \&
Institute of Mathematics, Physics and Mechanics, Ljubljana, Slovenia}
\email{aljaz.zalar@fri.uni-lj.si}
\thanks{${}^3$Supported by the Slovenian Research Agency grants J1-50002, J1-2453, J1-8132, J1-3004, J1-60011 and P1-0288.}
\thanks{${}^Q$This work was performed within the project COMPUTE, funded within the QuantERA II Programme that has received funding from the EU's H2020 research and innovation programme under the GA No 101017733 {\normalsize\euflag}}
\subjclass[2020]{13J30, 15B48, 46L07, 47D06, 52A40 (Primary); 47L25, 90C22 (Secondary)}
\date{\today}
\keywords{positive polynomial, sum of squares, positive map, completely positive map, one-parameter semigroups,  convex cone}
\begin{document}

\numberwithin{equation}{section}

\dottedcontents{section}[3.8em]{}{2.3em}{.4pc} 
\dottedcontents{subsection}[6.1em]{}{3.2em}{.4pc}
\dottedcontents{subsubsection}[8.4em]{}{4.1em}{.4pc}

\begin{abstract}
A $\ast$-linear map $\Phi$ between matrix spaces is cross-positive if it is positive on orthogonal pairs $(U,V)$ of positive semidefinite matrices in the sense that $\langle U,V\rangle:=\Tr(UV)=0$ implies $\langle \Phi(U),V\rangle\geq0$, and is completely cross-positive if all its ampliations $I_n\otimes \Phi$ are cross-positive. (Completely) cross-positive maps arise in the theory of operator semigroups, where they are sometimes called exponentially-positive maps, and are also important in the theory of affine processes on symmetric cones in mathematical finance.

To each $\Phi$ as above a bihomogeneous form is associated by $p_\Phi(x,y)=y^T\Phi(xx^T)y$. Then $\Phi$ is cross-positive if and only if $p_\Phi$ is nonnegative on the variety of pairs of orthogonal vectors $\{(x,y)\mid x^Ty=0\}$. Moreover, $\Phi$ is shown to be completely cross-positive if and only if $p_\Phi$ is a sum of squares modulo the principal ideal $(x^Ty)$. These observations bring the study of cross-positive maps into the powerful setting of real algebraic geometry. Here this interplay is exploited to prove quantitative bounds on the fraction of cross-positive maps that are completely cross-positive. Detailed results about cross-positive maps $\Phi$ mapping between $3\times 3$ matrices are given. Finally, an algorithm to produce cross-positive maps that are not completely cross-positive is presented.
\end{abstract}

\title[Cross-positive linear maps, positive and sums of squares polynomials]{Cross-positive linear maps, positive polynomials\\[1mm] and sums of squares}

\maketitle

\tableofcontents

\section{Introduction}

Let $M_n(\RR)$ be the vector space of $n\times n$ real matrices equipped with the \textbf{involution $T$} which is the usual transposition of matrices.
We use $\psd n$ to denote the set of all positive semidefinite (symmetric) matrices. We let $I_n$ (resp.\ $0_n$) stand for the $n\times n$ identity (resp.\ zero) matrix.
A linear map $A: M_n(\RR)\to M_n(\RR)$ 
is $\ast$-linear if $A(U^T)=A(U)^T$ for all $U\in M_n(\RR)$. A
$\ast$-linear map $A$ is
 \textbf{positive} if it maps
positive semidefinite matrices into positive semidefinite matrices,
and is \textbf{completely positive} if the  ampliations 
$$
I_k\otimes A:
M_k(\RR)\otimes M_n(\RR)
\to M_k(\RR)\otimes M_n(\RR),\quad
U\otimes V\mapsto U\otimes A(V)
$$
are positive for every $k\in\NN$. Here $\otimes$ stands for the Kronecker tensor product of matrices.
Relaxing positivity of $A$ to the condition
\beq\label{eq:crp}
	\forall U,V\in\psd n: \langle U,V\rangle=0\quad\Rightarrow\quad\langle A(U),V\rangle\geq 0,
\eeq
where $\langle\textvisiblespace,\textvisiblespace\rangle$ denotes the standard scalar product
on $M_n(\RR)$, i.e., $\langle B,C\rangle:=\Tr(C^TB)$,
gives a definition of a \textbf{cross-positivity} of $A$ in which case $A$ is \textbf{\crp}.  Similarly, we call $A$ \textbf{\ccrp} if
\beq\label{eq:ccrp}
	\forall k\in\NN, \, \forall U,V\in\psd {nk}: \langle U,V\rangle=0\quad\Rightarrow\quad\langle (I_k\otimes A)(U),V\rangle\geq0.
\eeq
 \looseness=-1

In \cite{KOST} the authors construct, for the first time, a proper \crp map $A$, that is, a \crp map that is not \ccrp.
Such maps and the associated 
one-parameter semigroups 
(under composition) $\{\exp(tA) \colon t\ge 0\}$
of endomorphisms of a symmetric cone 
are an important ingredient in  the theory of affine processes on symmetric
cones. In the semigroup theory \crp (resp.\ \ccrp) maps are known as \expp (resp.\ \cexpp) maps
(see Section \ref{one-par-semigroups} for details).
Affine processes play a major
role in math finance \cite{CK-RMT16};
they are simple enough to be tractable from the point of view of theory and numerics, while at the same time sufficiently flexible from a modeling point of view. Affine processes on the cone of real positive semidefinite matrices were classified in \cite[Theorem 2.4]{CFMT11}, see also \cite[Theorem 2.19]{CK-RMT16} for the classification of affine processes on all symmetric cones. According to the classification, the linear drift of an affine process is given by a \crp map. The \crp map defining the drift is unique only modulo an integral with respect to a measure that describes jumps of the affine process. The operator defined by the integral is completely positive, so a drift defined by a \crp, but not completely, cross-positive map cannot be removed by a change of measure. See \cite{CK-RMT16} or \cite[Section 6]{KOST} for more details.

In this paper we investigate and quantify the gap between 
\crp maps and \ccrp maps, and provide an algorithm for providing further examples of proper \crp maps. In addition to matrix analysis our main tools include real algebraic geometry \cite{BCR98}, convexity \cite{Sch,BB05} and 
harmonic analysis \cite{Duo87}.

\subsection{Main results and readers' guide} 
In the preliminary Section \ref{sec:prelim}
we translate the properties of 
$\ast$-linear maps 
$A:M_n(\RR)\to M_n(\RR)$
to properties of biquadratic forms
\begin{equation}\label{eq:pA}
p_A:=\y^TA(\x\x^T)\y\in\RR[\x,\y],
\end{equation} 
where $\x=(\x_1,\ldots,\x_n),$ $\y=(\y_1,\ldots,\y_n)$ are tuples 
of commuting indeterminates. 
Then we explain that \ccrp maps are much tamer and easier to handle than
\crp maps, resembling the well-known relationship between positive and completely positive maps \cite{Cho75,Arv09,KMSZ19,tHM21}.

The main contribution of this article is three-fold.
First, 
we quantify the gap between \crp and \ccrp maps.
Roughly speaking, very few \crp maps are \ccrp.
More precisely, as shown in Corollary
\ref{verjetnost-intro},
	the probability $p_n$ that a 
	\crp map
		$M_n(\RR)\to M_n(\RR)$
	is \ccrp, is 
less than $(Cn)^{-\frac12\binom{n+1}{2}^2}$ for an absolute constant $C$,
so	\(\lim\limits_{n\to\infty}p_{n}=0.\)
Our proof roughly
follows Blekherman's outline 
in his papers characterizing the gap between positive and sum of squares polynomials
\cite{Ble06, BB05}.
A key new ingredient in the proof is a dimension-independent 
reverse H\"older inequality for bilinear biforms 
given in Section \ref{revHolder}.

Section \ref{sec:31}
considers the smallest nontrivial case, that is, the case
of \crp maps $A:M_3(\RR)\to M_3(\RR)$.
We give 
real algebraic geometry inspired
certificates (Nichtnegativstellens\"atze) 
for $A$ to be \crp;
see Theorem \ref{Nsatz-with-tr} for the case when $A$
satisfies some mild nonsingularity-type assumption,
and Corollary \ref{3x3e_1->0} for the singular case.\looseness=-1

Finally, 
in Section \ref{sec:algo},
as a side product of our analysis we provide a randomized polynomial-time algorithm 
based on semidefinite programming \cite{WSV00} for producing \exotic
\crp maps.

\section{Preliminaries}\label{sec:prelim}

\subsection{Cross-positivity in the language of operator semigroups}
\label{one-par-semigroups}
Consider a $\ast$-linear map $A:M_n(\RR)\to M_n(\RR)$. For each $t\in \RR$ the linear map $\exp(tA):M_n(\RR)\to M_n(\RR)$ is defined by $\exp(tA)=\sum_{i=0}^{\infty}\frac{1}{i!}(tA)^i$. The operator valued function $t\mapsto \exp(tA)$ is the solution of the differential equation $\dot{X}(t)=AX(t)$, which makes it important in analysis and applications to physics \cite{Lin76} and math finance \cite{CFMT11, CK-RMT16}.  The well-known formula $\exp((s+t)A)=\exp(sA)\circ \exp(tA)$ implies that the set 
$$\{\exp(tA) \colon t\ge 0\}$$ 
is a (one-parameter) semigroup under composition. The $\ast$-linear map $A$ is the \textbf{generator} of this one-parameter semigroup.
We call $A$ \textbf{\expp}, resp.~\textbf{\cexpp}, if 
$\exp(tA)$ is a positive, resp.~completely positive map
for all $t\geq 0$. In such a case the semigroup $\{\exp(tA)\colon t\ge 0\}$ is a \textbf{positive}, resp. \textbf{completely positive one-parameter semigroup}. Note the positivity of linear maps and their one-parameter semigroups is studied more generally over ordered vector spaces, in finite and infinite dimensions, and for bounded and unbounded linear operators.
We refer the reader to \cite{EHO,AN} for detailed studies.

The (complete) exponential positivity property can be rephrased in a more traditional matrix theory using (complete) cross-positivity.

\begin{thm}[\protect{\cite[Theorem 3]{SV70}}]
\label{thm:SV}
A $\ast$-linear map $A:M_n(\RR)\to M_n(\RR)$ is \expp if and only if
it is cross-positive.
\end{thm}

\begin{corollary}
    A $\ast$-linear map $A:M_n(\RR)\to M_n(\RR)$ is \cexpp if and only if it is \ccrp.
\end{corollary}

\begin{proof}
By definition, $A$ is \ccrp if and only if $I_k\otimes A$ is \crp for each $k\in \NN$. By Theorem \ref{thm:SV} this holds if and only if $I_k\otimes A$ is \expp for each $k\in \NN$, i.e., if and only if\looseness=-1
\begin{eqnarray*}
\forall k\in \NN, \forall t\ge 0,\forall X\succeq 0&:&\exp(t(I_k\otimes A))(X)=\sum_{i=0}^{\infty}\frac{1}{i!}t^i(I_k\otimes A)^i(X)\\
&=&\Big(I_k\otimes \sum_{i=0}^{\infty}\frac{1}{i!}(tA)^i\Big)(X)=(I_k\otimes \exp(tA))(X)\succeq 0.
\end{eqnarray*}
However, this is equivalent to complete positivity of $\exp(tA)$ for each $t\ge 0$, i.e., to complete exponential positivity of $A$.
\end{proof}

\subsection{Cross-positive maps and biquadratic biforms}
Let $n\geq 2$ and let $\sa n$ stand for the set of all real symmetric $n\times n$ matrices.
To each linear map $A:\sa {n}\to \sa {n}$ we assign the biquadratic form
$p_A\in\RR[\x,\y]$ as in \eqref{eq:pA}.
Let $I\subseteq\RR[\x,\y]$ be the ideal generated by
$\y^T\x=\sum_{i=1}^n\x_i\y_i$, and let $V(I)$ be the corresponding real variety
\[
	V(I):=\{(x,y)\in\RR^{n}\times\RR^n\mid y^Tx=0\}.
\]
The variety $V(I)$ is an irreducible hypersurface for $n\ge 2$ and the defining polynomial $\y^T\x$ changes sign on $\RR^{2n}$. Hence 
 the ideal $I$ is real radical \cite[Theorem 4.5.1]{BCR98}. Thus
$I$ is the vanishing ideal of $V(I)$, i.e., a polynomial $p\in\RR[\x,\y]$ vanishes on $V(I)$ if and only if $p\in I$.

A sum of a positive map and a map of the form 
\beq\label{eq:null}
A(X)=CX+XC^T \quad\text{for some }C\in M_n(\RR)\text{ and for all }  X\in M_n(\RR)
\eeq
is clearly \crp. The converse is true up to closure, see \cite[Lemma 6 and Theorem 2]{SV70}. It was long conjectured that each \crp map is a sum of a positive map and a map of the form \eqref{eq:null} (see \cite[Section 4]{Damm04} or \cite[p.409]{CFMT11}), but a counterexample was found in \cite{KOST}. Such counterexamples were called exotic \crp maps in \cite{KOST}. On the other hand, an analogous 
counterexample does not exist for \ccrp maps (see \cite[Theorem 3]{Lin76}).

The following is a special case of \cite[Corollary 15]{KOST} and \cite[Theorem 2]{TG13}, but can also be established by a straight-forward calculation.

\begin{lem}\label{lem:CX+XC^T}
For a linear map $A:\sa {n}\to \sa {n}$ we have $p_A\in I$ 
if and only if 
it is of the form \eqref{eq:null} for every $X\in \sa {n}$.
\end{lem}

The following lemma bounds the degrees of the forms needed in the sum of squares representations of biquadratic biforms modulo $I$.

\begin{lemma}\label{lem:sos-bilin-modI}
	Let a biquadratic biform $p\in \RR[\x,\y]$ be of the form 
		\begin{equation}\label{p-sos}
			p=\sum_{i=1}^k p_i^2+q
		\end{equation}
	for some $k\in \NN$, $p_i\in \RR[\x,\y]$ and $q\in I$. Then $p$ is a sum of squares of bilinear forms modulo the ideal $I$.
\end{lemma}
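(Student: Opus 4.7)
The plan is to exploit the bihomogeneity of the ideal $I=(\y^T\x)$ together with formal reality of $R:=\RR[\x,\y]/I$. Since $\y^T\x$ is bihomogeneous of bidegree $(1,1)$, both $I$ and $R$ are $\NN^2$-graded; since $\y^T\x$ is irreducible (for $n\ge 2$), $R$ is an integral domain; and the real-radicality of $I$ stated in the excerpt translates into $R$ having a formally real fraction field. The only consequence I really need is that a sum of squares in $R$ is zero iff each summand is zero.

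Writing $\bar f\in R$ for the image of $f\in\RR[\x,\y]$ and $f^{(a,b)}$ for its bidegree-$(a,b)$ component, I would first discard any $p_i\in I$ (absorbing $p_i^2$ into $q$) so that all $\bar p_i\ne 0$. The hypothesis becomes $\bar p=\sum_i\bar p_i^2$ in $R$. Substituting $\x\mapsto\lambda\x$, $\y\mapsto\mu\y$ (which preserves $I$, since the generator is bihomogeneous) and comparing coefficients of $\lambda^c\mu^d$ yields, for every bidegree $(c,d)$,
\[
[\bar p]_{(c,d)}\;=\;\sum_{i=1}^k\sum_{(a,b)+(a',b')=(c,d)}\bar p_i^{(a,b)}\,\bar p_i^{(a',b')},
\]
where the left-hand side equals $\bar p$ for $(c,d)=(2,2)$ and $0$ otherwise.

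The core step is an extremal-bidegree argument. Let $(M,N)$ be the lexicographically largest bidegree for which some $\bar p_i^{(M,N)}\ne 0$. In the identity at bidegree $(2M,2N)$, every nonzero summand has $(a,b),(a',b')\le_{\mathrm{lex}}(M,N)$, and combined with $(a,b)+(a',b')=(2M,2N)$ this isolates the diagonal $(a,b)=(a',b')=(M,N)$. Hence $[\bar p]_{(2M,2N)}=\sum_i\bigl(\bar p_i^{(M,N)}\bigr)^2$. If $(2M,2N)\ne(2,2)$, this sum is zero, and formal reality of $R$ forces each $\bar p_i^{(M,N)}=0$, contradicting the choice of $(M,N)$. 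Thus $(M,N)=(1,1)$. The mirror argument at the lex-minimal bidegree $(m,n)$ gives $(m,n)=(1,1)$ as well, so every nonzero bihomogeneous component of $\bar p_i$ sits in bidegree $(1,1)$. Therefore $p_i\equiv p_i^{(1,1)}\pmod I$, whence $p\equiv\sum_i\bigl(p_i^{(1,1)}\bigr)^2\pmod I$, a sum of squares of bilinear forms modulo $I$.

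I expect the only delicate point to be the bookkeeping at the extremal bidegrees, but this reduces to the elementary lex-order observation that $(a,b),(a',b')\le_{\mathrm{lex}}(M,N)$ together with $(a,b)+(a',b')=(2M,2N)$ forces $(a,b)=(a',b')=(M,N)$; formal reality then finishes the job.
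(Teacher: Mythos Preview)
Your proof is correct, and it takes a genuinely different route from the paper's. The paper works entirely in $\RR[\x,\y]$ with a two-stage degree argument: first it compares \emph{total} degree components to show $p_{i,0}=p_{i,1}=0$ (the latter by tracking the coefficient of $\x_\ell^2$ or $\y_\ell^2$ in $p$), reduces to $p=\sum_i p_{i,2}^2+r_2(\y^T\x)$ with $p_{i,2},r_2$ quadratic, and only then uses bidegree to eliminate the $\x_j\x_k$ and $\y_j\y_k$ monomials from $p_{i,2}$. At no point does it invoke real-radicality of $I$; all vanishing is forced by sums of squares of \emph{real numbers} or of \emph{polynomials in $\RR[\x,\y]$} being zero.

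Your argument, by contrast, passes to the bigraded quotient $R=\RR[\x,\y]/I$ and uses a single lex-extremal sweep over bidegrees, with the key cancellation step powered by the fact that $R$ is a real ring (i.e., $\sum \bar a_i^2=0\Rightarrow \bar a_i=0$), which is exactly the content of $I$ being real radical. This is cleaner and more portable: the same argument would work verbatim for any bihomogeneous real-radical ideal, whereas the paper's hands-on version depends on the specific shape of the generator $\y^T\x$ (e.g., that every monomial of $q$ carries a $\y$-variable). The paper's approach, on the other hand, is slightly more elementary in that it never needs the real-radical property and stays at the level of explicit coefficient comparisons. Your remark that $R$ is a domain is true but unnecessary for what you actually use; real-radicality alone gives the ``no nontrivial sos relation'' property you need.
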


\begin{proof}
	The polynomial $q$ is of the form $q=r(\x,\y)\big(\sum_{i=1}^n \x_{i}\y_i\big)$ where $r(\x,\y)\in \RR[\x,\y]$.
	Let us write $p_{i,j}$ and $r_{j}$ to denote the homogeneous parts of $p_i$ and $r$ of degree $j$.  
	By comparing the degree $0$ parts of both sides of  $\eqref{p-sos}$ we conclude that $p_{i,0}\equiv 0$ for each $i$. 
	Polynomials $p_{i,1}(\x,\y)$ are of the form $p_{i,1}(\x,\y)=\sum_{\ell=1}^n (a_{i,\ell} \x_\ell + 
	b_{i,\ell} \y_{\ell})$ where $a_{i,\ell}\in \RR$, $b_{i,\ell}\in \RR.$ If any of 
	$a_{i,\ell}$ or $b_{i,\ell}$ is nonzero, then $\x_{\ell}^2$ or $\y_{\ell}^2$ should appear in $p$ with a positive coefficient, which is not true. 
	Hence, $p_{i,1}\equiv 0$ for each $i$ and consequently $r_{0}=r_1\equiv 0$.
	By comparing the degree 4 parts of both sides of \eqref{p-sos} we get 
		$p=\sum_{i=1}^k p_{i,2}^2 + r_{2}\big(\sum_{i=1}^n \x_{i}\y_i\big)$,
	where $p_{i,2}(\x,\y)$ and $r_{2}$ are linear combinations of monomials of the form $\x_{j_1}\x_{j_2}$, $\y_{j_1}\y_{j_2}$ and $\x_{j_1}\y_{j_2}$ for some $j_1,j_2\in \{1,\ldots,n\}$.
	Since $p$ is a biform of bidegree $(2,2)$, we conclude that only  monomials of the form $\x_{\ell_1}\y_{\ell_2}$ appear nontrivially in $p_{i,2}$ and $r_2$.
This proves the lemma.\looseness=-1
\end{proof}

We define the map $\Psi:(\x,\alpha)\mapsto(\x,\y)$ given by
	\begin{equation}\label{eq:VI}
	\begin{split}
		y_1	&=	\alpha_1 x_2,\\
		y_i	&=	\alpha_i x_{i+1}-\alpha_{i-1}x_{i-1} \quad\text{ for }i=2,\ldots,n-1,\\
		y_n	&=	-\alpha_{n-1}x_{n-1},
	\end{split}
	\end{equation}
	where
$\alpha=(\alpha_1,\ldots,\alpha_{n-1})$ is a tuple of commuting 
variables. 

Note that the image $\Psi(\RR^{2n-1})$ of $\Psi$ is dense in $V(I)$
in the usual Euclidean topology. This follows by noticing that every point in $V(I)$ 
can be approximated arbitrarily well by points with nonzero $\x_i$-coordinates, which are in 
$\Psi(\RR^{2n-1})$ since expressing $\alpha_i$ from the system \eqref{eq:VI} above is then well-defined.

Under the map $\Psi$ 
the biquadratic form $p_A\in\RR[\x,\y]$ of \eqref{eq:pA} 
corresponds to\looseness=-1
\begin{equation} \label{eq:qA}
q_A(\x,\alpha)=p_A\big(\Psi(\x,\alpha)\big)\in \RR[\x,\alpha],
\end{equation} 
which is 
a  form quartic in $\x$ and quadratic in $\alpha$.

\begin{proposition}\label{prop:obviousPQcp}
For a $\ast$-linear map 
$A:M_n(\RR)\to M_n(\RR)$ the following are equivalent:
\begin{enumerate}[\rm (i)]
\item
$A$ is \crp;
\item
$p_A\geq0$ on $V(I)$;
\item
$q_A\geq0$ on $\RR^{2n-1}.$
\end{enumerate}
\end{proposition}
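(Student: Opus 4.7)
The plan is to prove the implications (i) $\Rightarrow$ (ii) $\Leftrightarrow$ (iii) $\Rightarrow$ (i).

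\smallskip

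For (i) $\Rightarrow$ (ii) I would specialize to rank-one matrices: given $(x,y)\in V(I)$, set $U=xx^T$ and $V=yy^T$, both positive semidefinite. A direct computation gives
$$
\langle U,V\rangle=\Tr(yy^Txx^T)=(x^Ty)^2=0,
$$
so cross-positivity of $A$ forces $\langle A(U),V\rangle=y^TA(xx^T)y=p_A(x,y)\geq 0$.

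\smallskip

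For (ii) $\Rightarrow$ (i), I would diagonalize. Let $U,V\in\psd n$ with $\langle U,V\rangle=0$, and write spectral factorizations $U=\sum_i x_ix_i^T$, $V=\sum_j y_jy_j^T$. Then
$$
0=\langle U,V\rangle=\sum_{i,j}(x_i^Ty_j)^2,
$$
so every pair $(x_i,y_j)$ lies in $V(I)$. Using linearity of $A$,
$$
\langle A(U),V\rangle=\sum_{i,j}y_j^TA(x_ix_i^T)y_j=\sum_{i,j}p_A(x_i,y_j)\geq 0
$$
by assumption (ii). Thus $A$ is cross-positive.

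\smallskip

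For the equivalence (ii) $\Leftrightarrow$ (iii), the key input is that $\Psi(\RR^{2n-1})\subseteq V(I)$ and is dense. The inclusion is a short calculation: substituting \eqref{eq:VI} into $\sum_i x_iy_i$, the terms telescope to $0$. Density in the Euclidean topology is noted right after \eqref{eq:VI}: any $(x,y)\in V(I)$ is the limit of points whose $\x$-coordinates are all nonzero, and on such points one can solve for $\alpha$ explicitly from \eqref{eq:VI}, placing them in $\Psi(\RR^{2n-1})$. Given this, (ii) $\Rightarrow$ (iii) is immediate from $q_A=p_A\circ\Psi$, while (iii) $\Rightarrow$ (ii) uses continuity of $p_A$: nonnegativity on the dense subset $\Psi(\RR^{2n-1})$ extends to all of $V(I)$.

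\smallskip

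The only step requiring a bit of thought is the rank-one reduction in (ii) $\Rightarrow$ (i); the rest is bookkeeping. I do not anticipate any real obstacles, since all three conditions are essentially restatements of one another once the parametrization $\Psi$ and the spectral decomposition are brought to bear.
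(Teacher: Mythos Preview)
Your proof is correct and follows essentially the same approach as the paper: the rank-one specialization for (i) $\Rightarrow$ (ii), the spectral decomposition for (ii) $\Rightarrow$ (i), and the density of $\Psi(\RR^{2n-1})$ in $V(I)$ for (ii) $\Leftrightarrow$ (iii). The only cosmetic difference is that the paper phrases the orthogonality step in (ii) $\Rightarrow$ (i) as ``the scalar product of two positive semidefinite matrices is nonnegative'' rather than your explicit expansion $\langle U,V\rangle=\sum_{i,j}(x_i^Ty_j)^2$, but these are equivalent.
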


\begin{proof}
The equivalence between (ii) and (iii) follows from the fact that $\Psi(\RR^{2n-1})$
is dense in $V(I)$ in the Euclidean topology.

(i)$\Rightarrow$(ii) Given $(x,y)\in V(I)$,
\[
	\langle xx^T,yy^T\rangle=\Tr(yy^Txx^T)=\Tr\big(y (y^Tx)x^T\big)=0.
\]
Hence
$	p_A(x,y)=\langle A(xx^T),yy^T\rangle\geq0
$
by assumption.

(i)$\Leftarrow$(ii) Assume $p_A$ is nonnegative on $V(I).$ Given $U,V\in\psd n$ with $\langle U,V\rangle=0$, write $U=\sum u_iu_i^T$ and $V=\sum v_iv_i^T$. 
As the scalar product of two positive semidefinite matrices is nonnegative, we deduce $\langle u_i,v_j\rangle=0$ for all $i,j$.
The assumption now implies $p_A(u_i,v_j)\geq0$. Then
\[
	 \langle A(U),V\rangle=\sum_{i,j} \langle A(u_iu_i^T),v_jv_j^T\rangle=\sum_{i,j} p_A(u_i,v_j)\geq0.
	\qedhere
\]
\end{proof}

We next give the counterpart of Proposition \ref{prop:obviousPQcp} for \ccrp maps.

\begin{proposition}\label{prop:obviousPQccp}
Let $A:\sa {n}\to \sa {n}$ be a linear map. The following are equivalent:
\begin{enumerate}[\rm (i)]
\item\label{part1-1805-1447}
$A$ extends to some \ccrp map $\widetilde{A}:M_n(\RR)\to M_n(\RR)$;
\item\label{part2-1805-1447}
$p_A$ is a sum of squares modulo $I$;
\item $q_A$ is a sum of squares.
\end{enumerate}
\end{proposition}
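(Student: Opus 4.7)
The plan is to establish (ii)$\Rightarrow$(iii) by composition with $\Psi$, (i)$\Leftrightarrow$(ii) via a Choi-matrix argument, and to close the loop (iii)$\Rightarrow$(ii) through a Gram-matrix comparison.

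For (ii)$\Rightarrow$(iii), Lemma \ref{lem:sos-bilin-modI} refines any sos-mod-$I$ representation of $p_A$ to one with bilinear summands, $p_A=\sum_\ell r_\ell^2+q\cdot(\y^T\x)$. Since $\Psi(\RR^{2n-1})\subseteq V(I)$, we have $(\y^T\x)\circ\Psi=0$, and composing yields $q_A=\sum_\ell(r_\ell\circ\Psi)^2$ immediately.

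For (i)$\Leftrightarrow$(ii), I introduce the symmetric Choi/Gram matrix $P_A\in\sa{n^2}$ characterized by $p_A(x,y)=(y\otimes x)^T P_A(y\otimes x)$. Writing $u=\sum_a\xi^{(a)}\otimes e_a$ and $v=\sum_a\eta^{(a)}\otimes e_a$ in $\RR^{nk}$, a direct expansion gives
\[
\langle(I_k\otimes A)(uu^T),vv^T\rangle = U^T P_A U, \qquad U:=\sum_a\xi^{(a)}(\eta^{(a)})^T\in M_n,
\]
under the identification $M_n\cong\RR^{n^2}$, with $\Tr U=\langle u,v\rangle$. Since every trace-zero matrix arises this way (for $k$ large enough), $A$ is \ccrp iff $P_A$ is PSD on the hyperplane $H:=\{M\in M_n:\Tr M=0\}$, whose normal $w\in\RR^{n^2}$ is the vectorization of $I_n$. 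The standard ``PSD on a hyperplane'' lemma recasts $P_A|_H\succeq 0$ as the existence of $u'\in\RR^{n^2}$ with $P_A+u'w^T+w(u')^T\succeq 0$; factoring this as $\sum_\ell v_\ell v_\ell^T$ and setting $r_\ell(x,y):=v_\ell^T(y\otimes x)$ yields $p_A\equiv\sum_\ell r_\ell^2\pmod{I}$, which is (ii). Reversing these steps proves (ii)$\Rightarrow$(i).

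For (iii)$\Rightarrow$(ii), I compare Gram matrices. Under $\Psi$ one has $q_A=z^T P_A z$ where $z\in\RR[\x,\alpha]^{n^2}$ has components $z_{(i,k)}=y_i\,x_k$ (with $y$ given by \eqref{eq:VI}); expanding $z=C\vec{e}$ in a monomial basis $\vec{e}$ of bidegree-$(2,1)$ biforms exhibits $C^T P_A C$ as the natural Gram matrix of $q_A$, and a direct check shows that the columns of $C$ (viewed as matrices in $M_n$) span exactly $H$, so $C^T P_A C\succeq 0\iff P_A|_H\succeq 0\iff$~(ii). The main obstacle is extracting $C^T P_A C\succeq 0$ from the hypothesis that $q_A$ is sos: a generic sos decomposition only produces a PSD Gram matrix of the form $C^T P_A C+N$ with $N$ in the kernel of the multiplication map $\mathrm{Sym}^2 V_{(2,1)}\to V_{(4,2)}$, and the delicate point is arguing that the structured Choi-matrix character of $P_A$ forces $C^T P_A C$ itself to be PSD. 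This compatibility is precisely what promotes the weaker positivity condition $q_A\ge 0$ (equivalent to \crp) to the sos condition (equivalent to \ccrp).
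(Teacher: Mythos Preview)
Your treatments of (ii)$\Rightarrow$(iii) and (i)$\Leftrightarrow$(ii) are fine. The first is trivial, and your Choi-matrix route to (i)$\Leftrightarrow$(ii) is a legitimate alternative to the paper's approach (which simply invokes Lindblad's characterization of generators of completely positive semigroups together with Lemma~\ref{lem:CX+XC^T}). One small caution: the relation $p_A(x,y)=(y\otimes x)^T P_A (y\otimes x)$ does not pin down $P_A$ uniquely; for the displayed identity $\langle(I_k\otimes A)(uu^T),vv^T\rangle=U^T P_A U$ to hold for \emph{all} $u,v$ you must take the genuine Choi matrix (i.e.\ $(P_A)_{(ij),(kl)}=e_i^T A\big(\tfrac{e_je_l^T+e_le_j^T}{2}\big)e_k$), not just any Gram representative. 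With that choice the ``PSD on a hyperplane'' argument goes through.

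The implication (iii)$\Rightarrow$(ii), however, is not proved. You correctly set up the problem: $q_A=\vec{e}^{\,T}(C^T P_A C)\vec{e}$, the column space of $C$ is exactly $H$, and hence $C^T P_A C\succeq 0\iff P_A|_H\succeq 0\iff$(ii). You also correctly identify the obstacle: an arbitrary sos certificate for $q_A$ only gives $C^T P_A C+N\succeq 0$ for some $N$ in the kernel of the multiplication map $\mathrm{Sym}^2(\RR[\x,\alpha]_{2,1})\to\RR[\x,\alpha]_{4,2}$. But you then assert that ``the structured Choi-matrix character of $P_A$ forces $C^T P_A C$ itself to be PSD'' without any argument, and your closing sentence merely restates the proposition. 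This is the entire content of the implication: the space of bidegree-$(2,1)$ biforms has dimension $\binom{n+1}{2}(n-1)$, strictly larger than $\dim H=n^2-1$ for $n\ge 3$, so there is genuinely something to prove about why the sos summands of $q_A$ must lie in the $(n^2-1)$-dimensional image of $C^T$.

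The paper handles this by a Newton-polytope argument. First, \cite[Theorem~1]{Rez78} restricts the monomials that can appear in any sos representation $q_A=\sum_\ell (q^{(\ell)})^2$ to the list $\alpha_i x_{i+1}x_j$, $\alpha_{i-1}x_{i-1}x_j$ (plus the endpoint cases). Then a vanishing argument --- specializing $(\x,\alpha)$ so that only one $y_i$ survives and using irreducibility of $\alpha_i x_{i+1}-\alpha_{i-1}x_{i-1}$ --- shows that in each $q^{(\ell)}$ the monomials $\alpha_i x_{i+1}x_j$ and $\alpha_{i-1}x_{i-1}x_j$ can only occur through the combination $(\alpha_i x_{i+1}-\alpha_{i-1}x_{i-1})x_j=y_i x_j$. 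This exhibits each $q^{(\ell)}$ as a polynomial in the $y_i x_j$, hence as $r_\ell\circ\Psi$ for some bilinear $r_\ell$, which is exactly the missing step in your outline.
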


In the proof of the proposition we exploit Newton polytopes to restrict possible terms appearing in a sum of squares representation of $q_A$.

Let $\ur:=(r_1,\ldots,r_n)\in \ZZ^{n}_+$ stand for a tuple of nonnegative integers,
$\x^\ur$ for the monomial $\x_1^{r_1}\cdots \x_n^{r_n}$ and
$\conv(E)\subseteq \RR^n$ for the convex hull of the set $E\subseteq \RR^n$.
Recall that the \textbf{Newton polytope} $N(p)$ of a polynomial $p(\x)=\sum_{\ur} c_{\ur}\x^{\ur}$, 
where $c_\ur\in \RR\setminus\{0\}$, is the convex hull of the exponent vectors of the monomials
appearing nontrivially in $p$, i.e., 
	$$N(p)=\conv\big(\big\{\ur\colon \x^{\ur}\text{ has a nonzero coefficient in } p \big\}\big)\subseteq \RR^n.$$

\begin{proof}[Proof of Proposition \ref{prop:obviousPQccp}]
(i)$\Rightarrow$(ii): 
By \cite[Theorem 3]{Lin76}, 
$\widetilde{A}(X)=\widetilde\Phi(X)+CX+XC^T$ for some completely positive map $\widetilde\Phi$ and some $C\in M_n(\RR)$.
Using \cite[Proposition 3.1]{KMSZ19} for the restriction 
$\widetilde\Phi|_{\sa {n}}$ of $\widetilde\Phi$ to $\sa {n}$ and 
Lemma \ref{lem:CX+XC^T} for $X\mapsto CX+XC^T$, it follows that
$p_{\widetilde A}=p_A=\sum_{i=1}^kp_i^2+q$ for some bilinear forms $p_i$ and some biquadratic form $q\in I$, i.e., $p_A$ is a sum of squares modulo $I$ by Lemma \ref{lem:sos-bilin-modI}. 

(ii)$\Rightarrow$(i): Using \cite[Proposition 3.1]{KMSZ19} and Lemma \ref{lem:CX+XC^T}, 
$A(X)=\Phi(X)+CX+XC^T$ for some completely positive map $\Phi:\sa {n}\to \sa {n}$ and some $C\in M_n(\RR)$. Invoking Arveson's extension theorem \cite[Theorem 7.5]{Pau02}, there exists a completely positive extension $\widetilde{\Phi}:M_n(\RR)\to M_n(\RR)$ of $\Phi$, whence $\widetilde A(X)=\widetilde \Phi(X)+CX+XC^T$ is a \ccrp extension of $A$.

(ii)$\Rightarrow$(iii) is obvious, so we prove (iii)$\Rightarrow$(ii). 
First note the multi-homogeneity of $q_A$ implies that $q_A$ is a sum of squares of biforms that are quadratic in $\x$ and linear in $\alpha$. Write\looseness=-1
\begin{equation}\label{qA-2005-1904}
	q_A(x,\alpha)=\sum_{\ell=1}^{m}  q^{(\ell)}(\x,\alpha)^2,
\end{equation}
where 
	$\displaystyle  q^{(\ell)}(\x,\alpha)=
		\sum_{i=1}^{n-1}\sum_{1\leq j\leq k\leq n} c_{jk}^{(\ell,i)} \alpha_i\x_j\x_k$ 
for some $c_{jk}^{(\ell,i)}\in \RR$. 
It follows by definition that $q_A$ is a linear combination of the terms of the following forms:
\begin{itemize}
	\item $(\alpha_1 \x_2)^2 \; \x_j\x_k$, 
	\item $(\alpha_1 \x_2)(\alpha_{i}\x_{i+1}-\alpha_{i-1}\x_{i-1}) \x_j\x_k$,
	\item $\alpha_1\alpha_{n-1}\x_2\x_{n-1}\x_j\x_k$,
	\item $(\alpha_{i}\x_{i+1}-\alpha_{i-1}\x_{i-1})(\alpha_{\ell}\x_{\ell+1}-\alpha_{\ell-1}\x_{\ell-1})\x_j\x_k$,
	\item $(\alpha_{n-1} \x_{n-1})(\alpha_{i}\x_{i+1}-\alpha_{i-1}\x_{i-1}) \x_j\x_k$, 
	\item $(\alpha_{n-1} \x_{n-1})^2 \;\x_j\x_k$, 
\end{itemize}
where $i,\ell=2,\ldots,n-1$, $j,k=1,\ldots,n$.
By \cite[Theorem 1]{Rez78}, we have the inclusions $N(q^{(\ell)})\subseteq \frac{1}{2}N(q_A)$ of Newton polytopes,  which implies that
each $q^{(\ell)}$ is a linear combination of the monomials
\begin{equation}\label{mon-250521-0837}
	\alpha_1 \x_2 \x_j,\qquad \alpha_i \x_{i+1} \x_{j},\qquad \alpha_{i-1} \x_{i-1} \x_{j},\qquad \alpha_{n-1} \x_{n-1} \x_j,
\end{equation}
where $i=2,\ldots,n-1$ and $j=1,\ldots,n.$\\

\noindent \textbf{Claim.} Each
	$q^{(\ell)}(\x,\alpha)$
	can be expressed as a polynomial in the polynomials
		$\alpha_1\x_2$, $\alpha_2\x_3-\alpha_1\x_1$, $\ldots$, $\alpha_{n-1}\x_n-\alpha_{n-2}\x_{n-2}$,
		$\alpha_{n-1}\x_{n-1}$, $\x_1,\ldots,\x_n.$\\

\noindent\textit{Proof of Claim.}
We consider how each of the monomials in \eqref{mon-250521-0837} can appear in $q^{(\ell)}(\x,\alpha)$.
For $j=1,\ldots,n$, the monomials 
	$$\alpha_1 \x_2 \x_j=(\alpha_1 \x_2)\x_j\qquad \text{and} \qquad \alpha_{n-1} \x_{n-1} \x_j=(\alpha_{n-1}\x_{n-1})\x_j$$ 
can clearly by expressed as the claim suggests.
The formula 
	$$\alpha_i \x_i \x_{i+1}=\sum_{s=2}^{i}(\alpha_{s} \x_{s+1}-\alpha_{s-1}\x_{s-1})\x_s+(\alpha_1\x_2)\x_1$$
implies the same holds also for the monomials $\alpha_i \x_i \x_{i+1}$, $i=2,\ldots,n-1$.

For $i=2,\ldots,n-1$, it remains to consider the monomials 
\begin{equation}\label{rem-mon-250521-0927}
	\alpha_{i} \x_{i+1}\x_j, \; j\neq i,\qquad \text{and}\qquad \alpha_{i-1} \x_{i-1}\x_j, \;j\neq i.
\end{equation}
For $s=1,\ldots,n-2$ we define the vectors
\begin{align*}
	\widehat{\alpha}_s
		&=(\underbrace{0,\ldots,0}_{\substack{s-1\\ \text{ zeroes}}},\alpha_s,\alpha_{s+1},\underbrace{0,\ldots, 0}_{\substack{n-s-2\\ \text{ zeroes}}}),\\
	\widehat{\x}_s
		&=(\x_1,\ldots,\x_{s},0,\x_{s+2},\ldots,\x_n),\\
	\widehat{\y}_{s}
		&=
	(\underbrace{0,\ldots, 0}_{s \text{ zeroes}},
		\alpha_{s+1}\x_{s+2}-\alpha_s \x_s , 
		\underbrace{0, \ldots ,0}_{\substack{n-s-1\\\text{ zeroes}}}).
\end{align*}
If any of the monomials from \eqref{rem-mon-250521-0927} occurs in $q^{(\ell)}(\x,\alpha)$, then it also occurs in the polynomial $q^{(\ell)}(\widehat \x_{i-1},\widehat \alpha_{i-1})$ with the same coefficient.
By definition, 
\begin{equation*}
	q_A(\widehat{x}_{i-1},\widehat{\alpha}_{i-1})
		=\widehat{\y}_{i-1} A\left((\widehat{\x}_{i-1})^T\;\widehat{\x}_{i-1} \right) \widehat{\y}_{i-1}^T
		=(\alpha_{i}\x_{i+1}-\alpha_{i-1}\x_{i-1})^2 A\left((\widehat{\x}_{i-1})^T\; \widehat{\x}_{i-1} \right)_{ii}.
\end{equation*}
Hence, each 
	$q^{(\ell)}(\widehat{\x}_{i-1},\widehat{\alpha}_{i-1})$
vanishes on $V(\alpha_{i}\x_{i+1}-\alpha_{i-1}\x_{i-1})$. 
Since $\alpha_{i}\x_{i+1}-\alpha_{i-1}\x_{i-1}$ is irreducible in $\RR[\x,\alpha]$ and it changes sign on $\RR^{2n-1}$, it follows by \cite[Theorem 4.5.1]{BCR98} that
\begin{equation}\label{eq1-2005-1759}
	q^{(\ell)}(\widehat{\x}_{i-1},\widehat{\alpha}_{i-1})
	=(\alpha_{i}\x_{i+1}-\alpha_{i-1}\x_{i-1})p_{i-1}(\widehat{\x}_{i-1}),
\end{equation}
where $p_{i-1}$ is a linear form in $\widehat{\x}_{i-1}$. 
Now \eqref{eq1-2005-1759} implies that the monomials from \eqref{rem-mon-250521-0927}
can appear nontrivially in 
	$q^{(\ell)}(\widehat{\x}_{i-1},\widehat{\alpha}_{i-1})$
only from the scalar multiple of the term
	$$(\alpha_{i}\x_{i+1}-\alpha_{i-1}\x_{i-1})\x_j,$$
which concludes the proof of the claim.\hfill \ensuremath{\Box}\\

Using the Claim and \eqref{qA-2005-1904} it follows that $p_A(\x,\y)$ agrees on 
a dense subset of $V(I)$ and 
by continuity on the whole $V(I)$
with a sum of squares polynomial, 
which we denote by	$r(\x,\y)$. 
Since $p_A-r$ vanishes on $V(I)$, the polynomial $\y^T\x$ is irreducible in $\mathbb{R}[\x,\y]$ 
and its sign changes on $\mathbb{R}^{2n-1}$, it follows that $p_A-r\in I$ by \cite[Theorem 4.5.1]{BCR98}.
\end{proof}

\section{Nichtnegativstellens\"atze for the case $n=3$}\label{sec:31}

In this section we will establish, in the case $n=3$, some certificates of global nonnegativity for the form $q_A(\x,\alpha)$
of \eqref{eq:qA} or nonnegativity for $p_A(\x,\y)$ of \eqref{eq:pA} on $V(I)$. By Proposition \ref{prop:obviousPQcp}, this yields certificates
for a $\ast$-linear map $A:M_{3}(\RR)\to M_{3}(\RR)$ to be \crp.\looseness=-1

\begin{remark}
In the case $n=2$, 
	$$q_A(\x,\alpha)=\alpha_1^2 \cdot p_A((\x_1,\x_2),(\x_2,-\x_1))$$ 
and since 
$p_A((\x_1,\x_2),(\x_2,-\x_1))$ is a quartic form, it follows by \cite{Hil1} that $q_A$ is nonnegative if and only if
it is a sum of squares.
\end{remark}

For a matrix polynomial $A(\x)\in M_{m}(\RR[\x])$ we denote by $\Tr(A(\x))$ 
its \textbf{trace}, i.e., the sum of the diagonal entries.
For a ring $R$ we denote by $\sum M_m(R)^2$ the set of all finite sums of the expressions of the form
$G^T G$,
where $G\in M_m(R)$. 
Every element of $\sum M_m(R)^2$ is a
\textbf{sum of squares (sos) matrix polynomial}.
We say a symmetric matrix polynomial  $A(\x)\in M_{m}(\RR[\x])_{\textrm{sym}}$ is 
\textbf{positive semidefinite (psd) in $x\in \RR^n$} if $v^T A(x) v\geq 0$ for every $v\in \RR^m$, 
and write $A(x)\succeq 0$. We call $A(\x)\in M_{m}(\RR[\x])_{\textrm{sym}}$ \textbf{psd}
if it is psd in every $x\in \RR^n$.\looseness=-1

In this paragraph we connect, for every $n\in \NN$, 
global nonnegativity of $q_A(\x,\alpha)$ with positive semidefiniteness of a certain 
matrix polynomial. 
We denote by $\RR[\x]_{\hom}$ the set of homogeneous real polynomials in
$\x$. Since $q_A(\x,\alpha)$ is a quadratic form in $\alpha$ 
with coefficients from $\RR[\x]_{\hom}$, we can associate to it a symmetric matrix polynomial 
$Q_A\in M_{n-1}(\RR[\x]_{\hom})$ such that\looseness=-1 
\begin{equation}\label{eq:QA}
\alpha^T Q_A(\x)\alpha=q_A(\x,\alpha).
\end{equation} 

\begin{proposition}\label{prop-130921}
		The polynomial $q_A(\x,\alpha)$ is globally nonnegative if and only if $Q_A(\x)$ is positive
	semidefinite for all $\x$. 
\end{proposition}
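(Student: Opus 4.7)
The plan is essentially to unwind definitions: both conditions say the same thing about the quadratic form $\alpha \mapsto \alpha^T Q_A(x)\alpha$ as $x$ ranges over $\RR^n$.

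First I would note that the matrix polynomial $Q_A(\x)$ is well-defined and that the equality $\alpha^T Q_A(\x)\alpha = q_A(\x,\alpha)$ holds identically in $(\x,\alpha)$. This is because $p_A(\x,\y)$ is quadratic in $\y$, and the parametrization $\Psi$ in \eqref{eq:VI} is linear in $\alpha$, so composing gives that $q_A(\x,\alpha)$ is a homogeneous quadratic form in $\alpha$ with coefficients in $\RR[\x]_{\hom}$. Writing $q_A(\x,\alpha) = \sum_{i,j} q_{ij}(\x)\alpha_i\alpha_j$ and symmetrizing $q_{ij} \leftrightarrow q_{ji}$ produces the symmetric $Q_A(\x) \in M_{n-1}(\RR[\x]_{\hom})$ satisfying \eqref{eq:QA}.

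Then I would establish the equivalence in both directions by fixing $x \in \RR^n$. Assuming $Q_A(x) \succeq 0$ for every $x$, for any $\alpha \in \RR^{n-1}$ the number $q_A(x,\alpha) = \alpha^T Q_A(x)\alpha$ is nonnegative by definition of positive semidefiniteness of a real symmetric matrix; letting $(x,\alpha)$ range over $\RR^{2n-1}$ gives global nonnegativity of $q_A$. Conversely, if $q_A \geq 0$ on $\RR^{2n-1}$, then for each fixed $x \in \RR^n$ the real symmetric matrix $Q_A(x)$ satisfies $\alpha^T Q_A(x)\alpha = q_A(x,\alpha) \geq 0$ for all $\alpha \in \RR^{n-1}$, which is precisely the defining property of $Q_A(x) \succeq 0$.

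There is no genuine obstacle here; the statement is a tautology once one observes that a quadratic form in $\alpha$ with coefficients that are functions of $\x$ is pointwise nonnegative iff its Gram matrix (a matrix-valued function of $\x$) is pointwise positive semidefinite. The only thing that needs a word is existence and symmetry of $Q_A$, which is immediate from bidegree considerations on $q_A$.
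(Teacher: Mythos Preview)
Your proposal is correct and takes exactly the same approach as the paper: the paper's proof consists of the single sentence ``The statement follows by the equality \eqref{eq:QA},'' so you have simply written out in full what the paper leaves as a tautology.
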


\begin{proof}
	The statement follows by the equality \eqref{eq:QA}.
\end{proof}

\begin{remark}
Note that in the case $n=3$, Proposition \ref{prop-130921}
implies that
the parameterization \eqref{eq:VI} leads to the reduction of the problem of certifying cross-positivity of the map $A$ to certifying positivity of the $2\times 2$ matrix polynomial $Q_A$. Under the assumption that $Q_A$ does not vanish in any point of $\RR^3$ we establish such a certificate in Theorem \ref{Nsatz-with-tr} below.
\end{remark}

Let $A(\x)\in M_m(\RR[\x]_{\hom})$ be a matrix polynomial.
We call $x\in\RR^n$ a \textbf{zero} of $A(\x)$, if $A(x)$ is a zero matrix. 
A zero $x\in \RR^n$ of $A(\x)$ is \textbf{nontrivial}, if $x\neq 0$.
The following theorem is the first main result of this section. It is a certificate for $Q_A$ without nontrivial zeroes
to be psd in case $n=3$. 

\begin{theorem}\label{Nsatz-with-tr}
		Let 
	$Q(\x) \in M_2(\RR[\x_1,\x_2,\x_3]_{\hom})$ be a
	$2\times 2$ symmetric matrix polynomial over $\RR[\x]_{\hom}$, i.e., ${Q(\x)}^T=Q(\x)$.
	The following statements are equivalent:
	\begin{enumerate}[\rm (i)]
		\item\label{part1} $Q(\x)$ is positive semidefinite and does not have 
			nontrivial  real zeroes.
		\item\label{part2} $\Tr Q$ is strictly positive on 
			$\RR^3\setminus\{0\}$ and $\det Q$ is nonnegative on $\RR^3\setminus\{0\}$.
		\item\label{part3}  $\Tr Q$ is strictly positive on 
			$\RR^3\setminus\{0\}$ and there exists $N\in \NN$ such that 
			$\Tr(Q)^{N}\cdot \det Q$ is a sum of squares of forms.
		\item\label{part4} $\Tr Q$ is strictly positive on 
			$\RR^3\setminus\{0\}$ and there exists $N\in \NN$ such that 
			$$(\x_1^2+\x_2^2+\x_3^2)^{N}\cdot Q\in \sum M_2(\RR[\x])^2.$$
	\end{enumerate}
	Moreover, if all entries of $Q(\x)$ are of the same degree and $Q(\x)$ does not have nontrivial complex zeroes, 
	then \ref{part1}-\ref{part4} imply that:
	\begin{enumerate}[\rm (i)]
	\setcounter{enumi}{4}
		\item\label{part6} 
			If $J\subseteq \RR[\x]$ is the ideal in $\RR[\x]$
			generated by the polynomial $1-\x_1^2-\x_2^2-\x_3^2$, then
				$$Q\in \sum M_2(\RR[\x])^2+M_2(J).$$
	\end{enumerate}
\end{theorem}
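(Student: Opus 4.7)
\emph{Plan.} I would traverse the equivalences via (i)$\Leftrightarrow$(ii), (ii)$\Rightarrow$(iii)$\Rightarrow$(ii), (ii)$\Rightarrow$(iv)$\Rightarrow$(ii), and finally derive (v) from (iv). Throughout, set $p:=\x_1^{2}+\x_2^{2}+\x_3^{2}$. The equivalence (i)$\Leftrightarrow$(ii) is pure linear algebra: a real $2\times 2$ symmetric $M$ is psd iff $\Tr M\ge 0$ and $\det M\ge 0$, and a nonzero psd such $M$ has strictly positive trace. Reading this pointwise on $\RR^{3}\setminus\{0\}$ (and extending by continuity at the origin) does the job. The closing implications (iii)$\Rightarrow$(ii) and (iv)$\Rightarrow$(ii) are immediate, since sums of squares are nonnegative: dividing $(\Tr Q)^{N}\det Q$ by the strictly positive form $(\Tr Q)^{N}$ (or $p^{N}Q$ by $p^{N}$) preserves nonnegativity off the origin.

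The heart of the proof is (ii)$\Rightarrow$(iii) and (ii)$\Rightarrow$(iv), both instances of Positivstellens\"atze for ternary forms. For (ii)$\Rightarrow$(iii), $\Tr Q$ is a positive-definite ternary form and $\det Q$ a nonnegative ternary form; I would apply Reznick's uniform-denominators theorem to the strictly positive perturbation $\det Q+\varepsilon(\Tr Q)^{k}$ (with degrees matched) and pass to the limit in the sos cone of fixed degree, or alternatively invoke Scheiderer's projective-variety machinery, to produce $N$ with $(\Tr Q)^{N}\det Q\in\sum\RR[\x]^{2}$. For (ii)$\Rightarrow$(iv), I combine a matrix version of the same principle (Scherer--Hol, Scheiderer) with the $2\times 2$ Cayley--Hamilton identity
\[
(\Tr Q)\,Q=Q^{2}+(\det Q)\,I,\qquad\text{i.e.,}\qquad
(\Tr Q)^{2k+1}Q=\bigl((\Tr Q)^{k}Q\bigr)^{T}\bigl((\Tr Q)^{k}Q\bigr)+(\Tr Q)^{2k}\det Q\cdot I,
\]
which transfers the scalar sos certificate of (iii) into a matrix sos certificate of the form $p^{M}Q\in\sum M_2(\RR[\x])^{2}$.

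For (iv)$\Rightarrow$(v), given $p^{N}Q=\sum_i G_i^{T}G_i$ with $G_i\in M_2(\RR[\x])$, I rearrange to
\[
Q=\sum_i G_i^{T}G_i-(p^{N}-1)\,Q.
\]
Since $p^{N}-1=(p-1)(1+p+\cdots+p^{N-1})\in J$, the term $(p^{N}-1)Q$ has every entry in $J$, so lies in $M_2(J)$. Hence $Q\in\sum M_2(\RR[\x])^{2}+M_2(J)$, as required.

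\emph{Main obstacle.} The critical difficulty is (ii)$\Rightarrow$(iv). Under (ii) the matrix $Q$ is psd but generally \emph{not} strictly positive definite, because its rank-$1$ locus $\{\det Q=0,\ Q\ne 0\}$ can be a positive-dimensional real subvariety of $\RR^{3}$. This places $Q$ outside the direct scope of the classical matrix Reznick theorem (which requires strict positivity), and the proof therefore demands either a careful perturbation $Q_\varepsilon=Q+\varepsilon(\Tr Q)I\succ 0$ followed by a compactness-and-limit step in the sos cone of fixed degree, or a matrix Positivstellensatz specifically tailored to psd ternary matrix forms with rank-$1$ degenerations. The supplementary hypotheses on degrees and absence of complex zeros in the second half of the statement should help to tighten the sos certificate produced by this step.
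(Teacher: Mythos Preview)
Your treatment of (i)$\Leftrightarrow$(ii) and (ii)$\Leftrightarrow$(iii) matches the paper's; both invoke Scheiderer's result \cite[Corollary~3.12]{Sch1} on nonnegative ternary forms for the nontrivial implication. The Reznick-plus-limit alternative you sketch is not rigorous without a uniform degree bound as $\varepsilon\to 0$.

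For (ii)$\Rightarrow$(iv), your Cayley--Hamilton identity $(\Tr Q)\,Q=Q^{2}+(\det Q)I$ is a genuinely different and more elementary device than the paper's. The paper conjugates $Q$ by a fixed complex unitary $V$ so that both diagonal entries become $\Tr Q$, applies an LDL${}^{*}$-type factorization to extract the diagonal $\mathrm{diag}\bigl((\Tr Q)^{3},\,4(\Tr Q)\det Q\bigr)$, takes real and imaginary parts of the conjugating factor, and then invokes \cite[Corollary~3.12]{Sch1} with multiplier $p$ on each diagonal entry. Your identity reaches a comparable intermediate certificate with less machinery. However, the step you describe as ``transferring'' this into $p^{M}Q\in\sum M_{2}(\RR[\x])^{2}$ is a genuine gap: Cayley--Hamilton hands you the multiplier $(\Tr Q)^{2k+1}$, not $p^{M}$, and one strictly positive ternary form cannot be traded for another by a formal identity. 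Your proposed fix $Q_{\varepsilon}=Q+\varepsilon(\Tr Q)I$ followed by matrix Reznick and a limit runs into the same missing uniform-degree bound.

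Your (iv)$\Rightarrow$(v) via $p^{N}-1\in J$ is correct and considerably shorter than the paper's, which proves (i)$\Rightarrow$(v) directly by a local--global argument: at each point $x_{0}$ one selects an entry of (an orthogonal conjugate of) $Q$ that does not vanish there, runs the LDL${}^{*}$ factorization with that entry as pivot to obtain a local certificate $h^{2}Q\in\sum M_{2}(\RR[\x])^{2}+M_{2}(J)$, and then glues these using \cite[Proposition~2.7 and Theorem~3.2]{Sch1}. This is where the paper spends the ``no nontrivial complex zeroes'' hypothesis. Your observation shows that once (iv) holds in its stated form, that hypothesis is unnecessary for (v).
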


\renewcommand\qedsymbol{$\square$}

\begin{proof}[Proof of the equivalences
	$\ref{part1}\Leftrightarrow \ref{part2}\Leftrightarrow\ref{part3}$ of 
	Theorem \ref{Nsatz-with-tr}] 
	Since the trace and the determinant of a matrix are the sum and the product 
	of the eigenvalues, respectively, the equivalence between \ref{part1}
	and \ref{part2} is easy to see. 
	The nontrivial implication $(\Rightarrow)$ in the equivalence 
	$\ref{part2}\Leftrightarrow\ref{part3}$	follows
	by \cite[Corollary 3.12]{Sch1}.
\end{proof}

\renewcommand\qedsymbol{$\blacksquare$}

We equip the set of matrix polynomials $M_m(\CC[\x])$ with the
conjugate transpose \textbf{involution} 
$\ast$  and write $M_m(\CC[\x])_{\textrm{her}}$ for the 
subset of $\textbf{hermitian}$ matrix polynomials, i.e., 
$F\in M_m(\CC[\x])$ with $F^\ast=F$.
In the proof of $\ref{part1}\Rightarrow \ref{part4}$ of Theorem \ref{Nsatz-with-tr} we will make use of the following factorization lemma.

\begin{lemma} \label{h-2-1-lema}
	For 
	$Q=\left[\begin{array}{cc} a & b \\ b^\ast & c\end{array}\right]
		\in M_2(\CC[\x])_{\textrm{her}}$ 
	the following equalities hold:
		\begin{eqnarray}
		a^4 Q
		&=&
			\left[\begin{array}{cc} a & 0 \\ b^\ast & a \end{array}\right]
			\left[\begin{array}{cc} a^3 & 0 \\ 0 & a(ac- bb^\ast)\end{array}\right]
			\left[\begin{array}{cc} a & b \\ 0 & a \end{array}\right],\label{factor1}\\
			\left[\begin{array}{cc} a^3 & 0 \\ 0 & a(ac-bb^\ast)\end{array}\right]
		&=&
			\left[\begin{array}{cc} a & 0 \\ -b^\ast & a\end{array}\right]
			Q
			\left[\begin{array}{cc} a & -b \\ 0 & a \end{array}\right].
		\label{factor2}
		\end{eqnarray}
\end{lemma}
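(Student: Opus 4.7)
The plan is to verify both identities by direct matrix multiplication in the commutative ring $\CC[\x]$, exploiting the fact that $a^* = a$ and $c^* = c$ so that the entries of $Q$ commute freely with each other. Since the ambient ring is commutative, there is no real obstruction: the identities are purely formal and reduce to checking four scalar entries for each of \eqref{factor1} and \eqref{factor2}.

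For \eqref{factor1}, I would first compute the product of the middle diagonal matrix with the right upper-triangular factor, obtaining
\[
\begin{bmatrix} a^3 & 0 \\ 0 & a(ac-bb^\ast)\end{bmatrix}\begin{bmatrix} a & b \\ 0 & a\end{bmatrix}=\begin{bmatrix} a^4 & a^3 b \\ 0 & a^2(ac-bb^\ast)\end{bmatrix},
\]
and then multiply on the left by the lower-triangular factor. The $(1,1)$ and $(1,2)$ entries are immediate; the $(2,1)$ entry equals $a^4 b^\ast$ since scalars commute; and the decisive $(2,2)$ entry simplifies as $a^3 b^\ast b + a^3(ac-bb^\ast) = a^4 c$, using $b^\ast b = b b^\ast$. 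This matches $a^4 Q$ entrywise.

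For \eqref{factor2}, I would multiply $Q$ on the right by $\bigl[\begin{smallmatrix} a & -b \\ 0 & a\end{smallmatrix}\bigr]$ to get $\bigl[\begin{smallmatrix} a^2 & 0 \\ ab^\ast & ac - bb^\ast\end{smallmatrix}\bigr]$, where the $(1,2)$ entry vanishes by commutativity ($-ab + ba = 0$). Multiplying on the left by $\bigl[\begin{smallmatrix} a & 0 \\ -b^\ast & a\end{smallmatrix}\bigr]$ produces $a^3$ in the $(1,1)$ slot, zero in the $(1,2)$ slot, $-a^2 b^\ast + a^2 b^\ast = 0$ in the $(2,1)$ slot, and $a(ac - bb^\ast)$ in the $(2,2)$ slot, matching the claimed diagonal matrix.

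The only conceptual point worth flagging is that the factorization is essentially the $2\times 2$ version of Schur-complement block diagonalization, with the Schur complement $ac - bb^\ast$ appearing as the $(2,2)$ entry of the middle factor; the power $a^4$ on the left of \eqref{factor1} is the clearing denominator that makes the identity valid in the polynomial ring (rather than just in its field of fractions). There is no genuine obstacle to overcome, so the proof is a one-paragraph verification rather than a strategic argument.
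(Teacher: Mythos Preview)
Your proposal is correct and is exactly what the paper does: the paper's entire proof is the two words ``Easy computation,'' and your entrywise verification is precisely that computation spelled out. There is nothing to add.
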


\begin{proof}
	Easy computation.
\end{proof}

\begin{proof}[Proof of the equivalence
	$\ref{part1}\Leftrightarrow \ref{part4}$ of 
	Theorem \ref{Nsatz-with-tr}] 	
	The nontrivial implication is $(\Rightarrow)$.
	We write $Q=\left[\begin{array}{cc} a & b \\ b & c\end{array}\right]$.
	It is easy to check that
		$$Q =V\left[\begin{array}{cc} \Tr(Q) & i(a-c)+2b \\ i(c-a)+2b & \Tr(Q) 
			\end{array}\right]V^\ast,$$
	where 
		$V=\frac{1}{2}\left[\begin{array}{cc} 1 & i \\ i & 1\end{array}\right]$.
	By \eqref{factor1},  
		\begin{equation}\label{factor3}
			\Tr(Q)^4 \;Q =
			\widetilde{V}
			\left[\begin{array}{cc} \Tr(Q)^3 & 0 \\ 0 & \Tr(Q)(\Tr(Q)^2- dd^\ast)
			\end{array}\right]
			\widetilde{V}^\ast,
					\end{equation}
	where 
	$\widetilde V=V\left[\begin{array}{cc} \Tr(Q) & 0 \\ d^{\ast} & \Tr(Q) \end{array}\right]$
	and $d:=i(a-c)+2b$. A computation shows
		$\Tr(Q)^2- dd^\ast=4\det Q$.
	Since the left hand side of \eqref{factor3} belongs to $M_2(\RR[x]_{\hom})$,
	the right hand side equals 
		$$V_1
		\underbrace{\left[\begin{array}{cc} \Tr(Q)^3 & 0 \\ 0 & 4\Tr Q \det Q		
		\end{array}\right]}_{=:D}V_1^T+
		V_2
		\left[\begin{array}{cc} \Tr(Q)^3 & 0 \\ 0 &4 \Tr Q \det Q		
		\end{array}\right]V_2^T$$
	where $V_1, V_2\in M_2(\RR[x])$ are the real and imaginary parts of $\widetilde V$. 
	Now \ref{part4} follows by \cite[Corollary 3.12]{Sch1} since 
	there exists $N\in \NN$ large enough such that
	each form on the diagonal of $D$ multiplied by $(\x_1^2+\x_2^2+\x_3^2)^N$ is a sum
	of squares of forms.
\end{proof}

\begin{remark} \label{rem1}
 	If $Q$ in Theorem \ref{Nsatz-with-tr} is quartic (for example, $Q=Q_A$), 
	then $\Tr Q$ is a ternary quartic. 	
	Thus it is a sum of three squares by \cite{Hil1}.
	So in that case the exponent $N$ in \ref{part4} of 
	Theorem \ref{Nsatz-with-tr} depends only on
	$\det Q$ which is of degree 8.
	By \cite{Hil2} there is a positive form  $q$ of degree $4$ 
	such that $q\det Q$ is a sum of 
	squares of three forms.  Moreover, 
	$q^2 \det Q$ is a sum of squares of four forms \cite{Land}.
	See also \cite[p.\ 2830]{Rez05}.
\end{remark}

It remains to prove the implication $\ref{part1}\Rightarrow\ref{part6}$ 
in Theorem \ref{Nsatz-with-tr}. 
We will use Scheiderer's local-global principle \cite{Sch1}. For this aim we first prove 
the following proposition.

\begin{proposition} \label{pomozna}
	Assume in the notation of Theorem \ref{Nsatz-with-tr} that statement
	\ref{part1} holds and $Q(\x)$ does not have nontrivial complex zeroes.
 	Then for every 
	$x_0\in \CC^3\setminus\{0\}$ 
	there exists a polynomial $h\in \RR[\x]$ such that 
	$h(x_0)\ne 0$ and 
		\begin{equation*} 
			h^2Q\in \sum M_2(\RR[\x])^2+M_2(J).
		\end{equation*}
\end{proposition}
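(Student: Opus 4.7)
My plan is a two-case dichotomy on the location of $x_0$. If $x_0$ lies off the complex sphere $V_\CC(J)=\{x\in\CC^3:x_1^2+x_2^2+x_3^2=1\}$, I will set $h:=1-\x_1^2-\x_2^2-\x_3^2\in J$, so that $h(x_0)\neq 0$ and $h^2\in J$; then $h^2 Q\in M_2(J)\subseteq \sum M_2(\RR[\x])^2+M_2(J)$ trivially. This disposes of the easy case and reduces everything to $x_0\in V_\CC(J)$.

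The real content is the case $x_0\in V_\CC(J)$. Since $Q$ has no nontrivial complex zero and $x_0\neq 0$, the complex symmetric matrix $Q(x_0)\in M_2(\CC)$ is nonzero, and evaluating the form $r\mapsto r^T Q(x_0) r$ at $e_1$, $e_2$, $e_1+e_2$ shows that it is not identically zero on $\RR^2$. I will pick $r\in\RR^2$ with $r^T Q(x_0) r\neq 0$, extend it to a basis of $\RR^2$ to form $S\in\GL_2(\RR)$ with first column $r$, and pass to $Q'':=S^T Q S$. Then $a'':=(Q'')_{11}=r^T Q r$ is a ternary form that is psd on $\RR^3$ (inherited from $Q\succeq 0$) and satisfies $a''(x_0)\neq 0$, while $\det Q''=(\det S)^2\det Q$ is another psd ternary form. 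Applying Lemma \ref{h-2-1-lema} to $Q''$ yields
\[
(a'')^4\, Q'' = E^T\begin{bmatrix}(a'')^3 & 0\\ 0 & a''\det Q''\end{bmatrix}E,\qquad E=\begin{bmatrix}a'' & b''\\ 0 & a''\end{bmatrix},
\]
and conjugating by the constant invertible $S^{-1}$ transports this identity to an analogous factorization of $(a'')^4 Q$ (the entries of $M_2(J)$ and the sos structure in $\sum M_2(\RR[\x])^2$ are both preserved under left-right multiplication by constant invertible real matrices).

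The pivotal step is then to show that each of the two diagonal entries $(a'')^3$ and $a''\det Q''$ is a sum of squares modulo $J$. For this I will invoke Scheiderer's sphere Positivstellensatz: on the smooth compact real surface $V_\RR(J)=S^2$, every polynomial nonnegative on $S^2$ is a sum of squares in $\RR[\x]/J$. Applied to the psd ternary forms $a''$ and $\det Q''$, this provides sos-modulo-$J$ representations, which pass to $(a'')^3=(a'')^2\cdot a''$ and to $a''\det Q''$ by multiplying sos by a square or by another sos. Plugging these back into the sandwich factorization realizes $(a'')^4 Q$ in $\sum M_2(\RR[\x])^2+M_2(J)$, and the conclusion follows by taking $h:=(a'')^2$, for which $h(x_0)=a''(x_0)^2\neq 0$. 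The main obstacle is the scalar sphere Positivstellensatz input: it is the only nontrivial ingredient and is particularly delicate for $\det Q''$, which may vanish on the real sphere precisely where $Q$ drops rank. Everything else — the case split on $V_\CC(J)$, the choice of pivot vector $r$, and the propagation of sos structure through Lemma \ref{h-2-1-lema} — is straightforward bookkeeping.
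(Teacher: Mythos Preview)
Your proof is correct and follows essentially the same route as the paper: pass to a real basis change so that the $(1,1)$ entry of the conjugated matrix is nonzero at $x_0$, apply Lemma~\ref{h-2-1-lema} to reduce to a diagonal matrix with entries $\tilde a^3$ and $\tilde a\det\widetilde Q$, and invoke Scheiderer's theorem \cite[Theorem~3.2]{Sch1} on the sphere to put these scalar entries in $\sum\RR[\x]^2+J$. The only differences are cosmetic: the paper uses an orthogonal $U\in M_2(\RR)$ (built explicitly from the three cases $a(x_0)\neq 0$, $c(x_0)\neq 0$, $b(x_0)\neq 0$) rather than a general $S\in\GL_2(\RR)$, and it applies Scheiderer directly to the nonnegative diagonal entries $d_1,d_2$ (using \eqref{factor2} to see $d_1,d_2\geq 0$) rather than to $a''$ and $\det Q''$ separately; your initial case split on $x_0\notin V_\CC(J)$ is harmless but unnecessary, since the pivot argument already works uniformly for all $x_0\in\CC^3\setminus\{0\}$.
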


\begin{proof}[Proof of Proposition \ref{pomozna}]
	Let us write 
		$Q(\x)=\left[ 
			\begin{array}{cc}  a(\x) & b(\x) \\ b(\x) & c(\x)
			 \end{array} \right]$
	and choose $x_0\in \CC^3\setminus\{0\}.$
Since $Q$ is without nontrivial complex zeros, one of the following cases applies:
\begin{enumerate}
		\item\label{Case1} $a(x_0)\neq 0$.
		\item\label{Case2} $a(x_0)=0$ and $c(x_0)\neq 0$. 
		\item\label{Case3} $a(x_0)=c(x_0)=0$ and $b(x_0)\neq 0$. 
\end{enumerate}
\medskip

\noindent \textbf{Claim.} There exists an orthogonal matrix $U\in M_2(\RR)$ such that,
denoting $UQU^T=\left[ 
			\begin{array}{cc}  \widetilde a(\x) & \widetilde b(\x) \\ \widetilde b(\x) & \widetilde c(\x)
			 \end{array} \right]$, 
 $\widetilde a(x_0)\neq 0$.\\
\medskip

\noindent \textit{Proof of Claim.}
If we are in Case \eqref{Case1}, then we can take the identity matrix for $U$. If we are in Case \eqref{Case2},
then we take a permutation matrix for $U$. Finally, in Case \eqref{Case3}, we define 
$U=\frac{1}{\sqrt{2}}\left[\begin{array}{cc} 1 & 1 \\ 1 & -1\end{array}\right]$
and note that $\widetilde a(\x)=\frac{1}{2}(a(\x)+2b(\x)+c(\x))$. Hence, $\widetilde a(x_0)\neq 0$.\hfill \ensuremath{\Box}\\

By \eqref{factor1},   
	\begin{eqnarray*}
		\widetilde{a}^4\;  Q &=& 
		U^T\left[\begin{array}{cc} 
		\widetilde{a}& 0\\
		\widetilde{b} & 
		\widetilde{a}
		\end{array}\right]
		\left[\begin{array}{cc} 
		d_1 & 0\\
		0 & d_2
		\end{array}\right]
		\left[\begin{array}{cc} 
		\widetilde{a} & \widetilde{b}\\
		0 & \widetilde{a}
		\end{array}\right] U,
		\end{eqnarray*}
	where 
		\begin{equation*}
			d_1 =\widetilde{a}^3\in \RR[\x]\quad\text{and}\quad
			d_2 =	\widetilde{a}\left(\widetilde{a}\widetilde{c}
							- \widetilde{b}^2\right)\in 
							\RR[x].
		\end{equation*}
By \eqref{factor2}, 
	\begin{eqnarray*}
		\left[\begin{array}{cc} 
		d_1 & 0\\
		0 & d_2
		\end{array}\right] &=&
		\left[\begin{array}{cc} 
		\widetilde{a}& 0\\
		-\widetilde{b} & 
		\widetilde{a}
		\end{array}\right] U
		Q U^T
		\left[\begin{array}{cc} 
		\widetilde{a} & -\widetilde{b}\\
		0 & \widetilde{a}
		\end{array}\right].
	\end{eqnarray*}
		It follows that $d_1\geq 0$, $d_2\geq 0$ on $\RR^3$. 
	By \cite[Theorem 3.2]{Sch1}, $d_1$ and $d_2$ belong to $\sum \RR[\x]^2 + J$.
	This concludes the proof.
\end{proof}
	
\begin{proof}[Proof of the implication
	$\ref{part1}\Rightarrow \ref{part6}$ of 
	Theorem \ref{Nsatz-with-tr}] 	
	Let $R:=\RR[\x]/J$ be a quotient ring and let  $\Phi:R\rightarrow C(V(J),\RR)$ be the 
	natural map, i.e.,
	$\Phi(\check f)=f|_{V(J)},$ where
    $f\in \RR[\x]$, $\check f=f+J$
    and
    the variety $V(J)$ is the set
	$\left\{x\in \RR^3\colon \sum_{i=1}^3 x_i^2=1\right\}.$
	Let
		$$L:=\left\langle \check{h}^2\in \RR[\x]/J\colon h^2 Q\in \sum 
			M_2(\RR[\x])^2+ M_2(J)\right\rangle$$
	be an ideal in $\RR[\x]/J$.
	If $L$ were a proper ideal, then all its elements would have a common zero 
	$x_0\in \{x\in\CC^3\colon \sum_{i=1}^3 x_i^2=1\}$.
	By Proposition \ref{pomozna}, there exists $h\in \RR[\x]$ such that
	$h(x_0)\neq 0$ and $\check h^2\in L$.
	Hence $L$ is not a proper ideal and thus $L=\RR[\x]/J$.  
	In particular, there exist $\check h_1^2,\ldots ,\check h_k^2\in L$ such that $1+J\in \langle \check h_1^2,\ldots ,\check h_k^2\rangle$.
	By \cite[Proposition 2.7]{Sch1}, there exist $s_1,\ldots,s_k \in \RR[\x]$ with 
	$s_j>0$ on $V(J)$ 
 such that
	$\sum_{j=1}^k  s_j h_j^2\in 1+J$. 
	By \cite[Theorem 3.2]{Sch1}, $s_j\in \sum \RR[\x]^2+ J$.
	Hence
		$$\sum_{j=1}^k s_jh_j^2 (Q+M_2(J))=Q+M_2(J)\in  \sum M_2(\RR[\x])^2+ M_2(J),$$
	which concludes the proof.
\end{proof}

The following lemma, which holds in every dimension $n$, gives a special sufficient condition for a biquadratic form $p_A$ that is nonnegative on $V(I)$ to be a sum of a globally nonnegative biquadratic form and an element of $I$. 
Using the language of \cite{KOST}, a \crp map $A$ satisfying this condition is not exotic.
The lemma will be used in Corollary \ref{3x3e_1->0} to establish the second
main result of this section: a certificate for nonnegativity of $p_A$ on $V(I)$
in the case $n=3$ when $Q_A$ has a nontrivial real zero.

\begin{lemma}\label{e_1->0}
Let $A\colon M_n(\RR)\to M_n(\RR)$ be a cross-positive map and assume that there exists a nonzero vector $x_0\in \mathbb{R}^n$ such that 
	\begin{equation}\label{eq-030921-0748}
		\big(I_n-\frac{1}{||x_0||^2}x_0x_0^T\big)A(x_0x_0^T)\big(I_n-\frac{1}{||x_0||^2}x_0x_0^T\big)=0_n.
	\end{equation}
Then there exists $C\in M_n(\mathbb{R})$ such that the map $X\mapsto A(X)-CX-XC^T$ is positive.
\end{lemma}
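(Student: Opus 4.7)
The plan has three stages: construct a candidate $C$ from the hypothesis, refine it using a Lagrange-multiplier identity forced by cross-positivity, and then verify positivity by reducing the biquadratic form to one in fewer $\x$-variables. Let $P := I_n - \|x_0\|^{-2} x_0 x_0^T$ be the orthogonal projection onto $x_0^\perp$. Since a symmetric matrix $M$ satisfies $PMP=0$ iff it has the form $wx_0^T + x_0 w^T$ for some $w\in\RR^n$, the hypothesis \eqref{eq-030921-0748} produces $w$ with $A(x_0 x_0^T) = wx_0^T + x_0 w^T$. Pick any $C \in M_n(\RR)$ with $Cx_0 = w$, e.g., $C := \|x_0\|^{-2} w x_0^T$, and set $\tilde A(X) := A(X) - CX - XC^T$. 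Then $\tilde A(x_0 x_0^T) = 0$, and $\tilde A$ is still cross-positive: for $U,V\in\psd n$ with $\langle U,V\rangle = 0$ the ranges of $U$ and $V$ are orthogonal, so $UV = VU = 0$ and $\langle CU + UC^T, V\rangle = 0$.

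Fix an orthonormal basis with $e_1 := x_0/\|x_0\|$, and let $p := p_{\tilde A}$. By Proposition~\ref{prop:obviousPQcp} and the previous step, $p \ge 0$ on $V(I)$ while $p(x_0,\cdot)\equiv 0$; hence every pair $(x_0, y_0)$ with $0\ne y_0\perp x_0$ is a global minimizer of $p$ restricted to $V(I)$. Since $\nabla_y p(x_0, y_0) = 2\tilde A(x_0 x_0^T) y_0 = 0$, Lagrange multipliers at this smooth minimum on the hypersurface $V(I)$ force the multiplier, and therefore also $\nabla_x p(x_0, y_0) = 2\tilde A^\ast(y_0 y_0^T) x_0$, to vanish. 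Polarizing over $y_0 \in x_0^\perp$ produces $\tilde A^\ast(X) x_0 = 0$ for every symmetric $X$ supported on $x_0^\perp$, which dualizes to $P\, \tilde A(e_1 e_j^T + e_j e_1^T)\, P = 0$ for every $j \ge 2$. Thus each $F_j := \tilde A(e_1 e_j^T + e_j e_1^T)$ has the form $e_1 u_j^T + u_j e_1^T$ for some $u_j \in \RR^n$.

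Now define $D \in M_n(\RR)$ by $De_1 := 0$ and $De_j := u_j$ for $j \ge 2$. A straightforward check gives $D(e_1 e_j^T + e_j e_1^T) + (e_1 e_j^T + e_j e_1^T) D^T = F_j$ and $Dx_0 x_0^T + x_0 x_0^T D^T = 0$. Setting $C' := C + D$ and $\hat A(X) := A(X) - C'X - XC'^T$, the map $\hat A = \tilde A - D(\cdot) - (\cdot) D^T$ remains cross-positive and annihilates every symmetric matrix supported on the first row/column. Expanding $p_{\hat A}(\x, \y) = \y^T \hat A(\x \x^T) \y$, every monomial containing $\x_1$ cancels, so $p_{\hat A}$ depends only on $(\x_2, \ldots, \x_n, \y)$. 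For any fixed $(\x_2, \ldots, \x_n, \y)$ with $\y \ne 0$, one may choose $\x_1$ so that $\x \cdot \y = 0$, placing $(\x, \y)$ on $V(I)$; cross-positivity then gives $p_{\hat A} \ge 0$ there, and as the value is $\x_1$-independent, $p_{\hat A} \ge 0$ on all of $\RR^{2n}$. Hence $\hat A$ is positive and $C'$ is the required matrix.

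The main obstacle is the Lagrange identity of the second paragraph: converting the boundary condition $\tilde A(x_0 x_0^T) = 0$ together with cross-positivity of $\tilde A$ into the pointwise vanishing $\tilde A^\ast(yy^T) x_0 = 0$ for $y \perp x_0$, and then packaging it as the structural constraint $F_j = e_1 u_j^T + u_j e_1^T$. Once this rigidity is secured, the construction of $D$ and the subsequent reduction of $p_{\hat A}$ to a form in fewer $\x$-variables are routine.
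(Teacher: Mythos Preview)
Your proof is correct and follows essentially the same strategy as the paper's: both subtract a map of the form $X\mapsto CX+XC^T$ to kill $A(x_0x_0^T)$, invoke the Lagrange-multiplier identity at the resulting minima $(x_0,y_0)$ to force $P\,\tilde A(e_1e_j^T+e_je_1^T)\,P=0$, and then exploit the $\x_1$-independence of $p_{\hat A}$ to reduce global nonnegativity to cross-positivity on orthogonal pairs---the only organizational difference being that you build the correction in two stages ($C$ then $D$) whereas the paper writes down a single $C$ at once. One small slip: the sentence ``one may choose $\x_1$ so that $\x\cdot\y=0$'' needs $\y_1\neq 0$; the paper handles the degenerate case $\y_1=0$ by approximating $y$ with $y+\tfrac{1}{k}e_1$ and passing to the limit, and you should insert the same continuity remark.
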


For each $i=1,\ldots ,n$ let $e_i$ be the $i$-th element of the standard basis of $\RR^n$, i.e., the vector with 1 in the $i$-th component and 0 elsewhere.
We denote by $E_{ij}:=e_ie_j^T$ the standard $n\times n$ matrix units.

\begin{remark}\label{rem-030921-1458}
 In the proof of Lemma \ref{e_1->0} and Corollary \ref{3x3e_1->0} below we will use the following action of 
$\mathrm{GL}_n$ on the set of cross-positive linear maps $A:M_n(\RR)\to M_n(\RR)$:
	$$(g\cdot A)(X)=gA(g^{-1}Xg^{-T})g^{T}.$$
\end{remark}

\begin{proof}[Proof of Lemma \ref{e_1->0}]
By Remark \ref{rem-030921-1458} we can assume that $x_0=e_1$.
	Then \eqref{eq-030921-0748} means that $A(E_{11})$ is of the following form:
	$$A(E_{11})=\left[\begin{array}{cc} \ast &\ast\\
						\ast & 0_{n-1}
						\end{array}\right].$$
The idea of the proof consists of the following steps:
\begin{enumerate}
\item\label{030921-1138} There exists a matrix $C\in M_n(\RR)$ such that the map 
    $B:M_n(\RR)\to M_n(\RR)$, 
defined by 
	$B(X):=A(X)-CX-XC^T$, satisfies the following conditions:
	\begin{align}
	B(E_{11})&=0_n \label{to-prove1-030921},\\
	B(E_{1i}+E_{i1})e_1&=0 \quad\text{for }i=2,\ldots,n.\label{to-prove2-030921}
	\end{align}
\item\label{030921-1458} Using \eqref{to-prove1-030921} and cross-positivity of $B$ it follows that
	\begin{equation}\label{to-prove3-030921}
		B(E_{1i}+E_{i1})e_j=0\quad \text{for }i,j=2,\ldots,n.
	\end{equation}
\item\label{030921-1500} $y^TB(xx^T)y\geq 0$ for every $x,y\in \RR^n$. 
\end{enumerate}

To prove \eqref{030921-1138} first write 
	$C=\left[\begin{array}{ccc} \mathbf{c}_1& \cdots & \mathbf{c}_n\end{array}\right]$
in column form, where $\mathbf{c}_j$ are the columns of $C$ and note that
	\begin{equation}\label{030921-1139}
		CE_{ij}=\left[\begin{array}{ccc} 0_{n\times (j-1)} & \mathbf{c}_i & 0_{n\times (n-j)}\end{array}\right],
	\end{equation}
where $0_{k\times \ell}$ stands for the $k\times \ell$ zero matrix. 
Using \eqref{030921-1139} note that the condition \eqref{to-prove1-030921} determines $\mathbf{c}_1$, 
while conditions in \eqref{to-prove2-030921} determine columns $\mathbf{c}_i$, $i=2,\ldots,n$, i.e., 
\begin{align*}
	\mathbf{c}_1
	&=A(E_{11})e_1-\frac{1}{2}e_1^TA(E_{11})e_1\cdot e_1,\\
	\mathbf{c}_i
	&=A(E_{1i}+E_{i1})e_1-\frac{1}{2}e_1^TA(E_{1i}+E_{i1})e_1\cdot e_1-\frac{1}{2}e_1^TA(E_{11})e_1\cdot e_i\quad \text{for }i=2,\ldots,n.
\end{align*}

By \eqref{to-prove1-030921} it follows that $p_B(e_1,y)=0$ (where $p_B$ is as in \eqref{eq:pA}) for every $y\in \RR^n$.
In particular, $\frac{\partial p_B}{\partial y_i}(e_1,y)=0$ for every $y\in \RR^n$ and every $i=1,\ldots,n$.
Since $p_B(x,y)\ge 0$ on $V(I)$, for each $y\perp e_1$ there exists a Lagrange multiplier $\lambda (y)\in \mathbb{R}$ such that
\begin{equation}\label{030921-1450}
	\grad p_B(e_1,y)=\lambda(y)\grad h(e_1,y),
\end{equation}
where $h(\x,\y)=\y^T\x$.
In particular,
	$$0=\frac{\partial p_B}{\partial y_1}(e_1,y)=\lambda(y)\frac{\partial h}{\partial y_1}(e_1,y)=\lambda(y),$$
and using $\lambda(y)=0$ in \eqref{030921-1450} implies that
\begin{equation}\label{030921-1456}
	0=\frac{\partial p_B}{\partial x_i}(e_1,y)=y^TB(e_1e_i^T+e_ie_1^T)y\quad \text{for every }i=2,\ldots,n.
\end{equation}
Since $y$ is any vector orthogonal to $e_1$, this proves \eqref{030921-1458}. 

It remains to check \eqref{030921-1500}. We write $x=\lambda e_1+v$ and $y=\mu e_1+w$ for some $v,w\perp e_1$ and some $\lambda ,\mu \in \mathbb{R}$. Using \eqref{to-prove1-030921}--\eqref{to-prove3-030921} we see that $B(e_1e_i^T+e_ie_1^T)=0$ for each $i=1,\ldots ,n$ and thus $B(xx^T)=B(vv^T)=B((\nu e_1+v)(\nu e_1+v)^T)$ for each $\nu \in \mathbb{R}$. If $\mu \ne 0$, then $y^T(v-\frac{w^Tv}{\mu}e_1)=0$, therefore cross-positivity of $B$ implies
$$0\le y^TB\left(\left(v-\frac{w^Tv}{\mu}e_1\right)\left( v-\frac{w^Tv}{\mu}e_1\right)^T\right)y=y^TB(xx^T)y.$$
On the other hand, if $\mu =0$, then a sequence $\{z_k\}_{k\in \mathbb{N}}$, where $z_k:=\frac{1}{k}e_1+w$, satisfies
$z_k^Te_1\ne 0$ for each $k\in \mathbb{N}$ and $y=\lim _{k\to \infty}z_k$.  
By the above,
$z_k^TB(xx^T)z_k\ge 0$ for each $k\in \mathbb{N}$, therefore
$$y^TB(xx^T)y=\lim _{k\to \infty}z_k^TB(xx^T)z_k\ge 0.$$
This concludes the proof of Lemma \ref{e_1->0}.
\end{proof}

\begin{remark}
As a consequence of Lemma \ref{e_1->0} it follows that testing cross-positivity of a linear map is a NP-hard problem,
since this is true for testing positivity of a biquadratic form, see, e.g., \cite[Theorem 2.2]{LNQY09}. Indeed, let $A\colon M_n(\RR)\to M_n(\RR)$ be an arbitrary $\ast$-linear map. Let $D\colon M_{n+1}(\RR)\to M_n(\RR)$ be the map that deletes the first row and column and let $B\colon M_{n+1}(\RR)\to M_{n+1}(\RR)$ be defined by $B(X)=\left[ \begin{array}{cc}0 & 0 \\ 0 & A(D(X))\end{array}\right]$. It is clear that the map $A$ is positive if and only if $B$ is. However, $B(E_{11})=0$ and $B(E_{1i}+E_{i1})=0$ for $i=2,\ldots ,n+1$, hence the proof of Lemma \ref{e_1->0} implies that $B$ is positive if and only if it is \crp. 
Thus the problem of testing whether $B$ is \crp
is from a computational complexity viewpoint at least as hard as checking whether $A$ is positive, implying that testing 
cross-positivity of a map is NP-hard.
\end{remark}

\begin{cor}\label{3x3e_1->0}
Let $n=3$ and let $p\in \mathbb{R}[\x,\y]$ be a biquadratic form which is nonnegative on the real variety $V(I)$. Assume that there exist a nonzero vector $v_0\in \mathbb{R}^3$ and two linearly independent vectors $w_1,w_2\perp v_0$ such that $p(v_0,w_1)=p(v_0,w_2)=0$ or that there exist a nonzero vector $w_0\in \mathbb{R}^3$ and two linearly independent vectors $v_1,v_2\perp w_0$ such that $p(v_1,w_0)=p(v_2,w_0)=0$. Then $p\in \sum \mathbb{R}[\x,\y]^2+I$.
\end{cor}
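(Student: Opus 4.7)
The plan is to reduce both hypotheses of Corollary~\ref{3x3e_1->0} to a single case via adjoint symmetry, peel off a $CX+XC^T$ summand using Lemma~\ref{e_1->0}, and finish via a degree-$2$ matrix Fej\'er--Riesz factorization.

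Setting $p=p_A$, the identity $p_{A^\ast}(\x,\y)=p_A(\y,\x)$ and the symmetry of $I$ under $\x\leftrightarrow\y$ show that each hypothesis (as well as the conclusion $p\in\sum\RR[\x,\y]^2+I$) transfers between $A$ and $A^\ast$, so I may assume the first case: there exist a nonzero $v_0$ and linearly independent $w_1,w_2\perp v_0$ with $p(v_0,w_i)=0$. The nonnegative quadratic form $y\mapsto p(v_0,y)$ on the $2$-dimensional space $v_0^\perp$ vanishes on a basis and hence identically, which is equivalent to $\bigl(I_3-\tfrac{v_0v_0^T}{\|v_0\|^2}\bigr)A(v_0v_0^T)\bigl(I_3-\tfrac{v_0v_0^T}{\|v_0\|^2}\bigr)=0$, the hypothesis of Lemma~\ref{e_1->0}. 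That lemma produces $C\in M_3(\RR)$ with $B(X):=A(X)-CX-XC^T$ positive, and Lemma~\ref{lem:CX+XC^T} gives $p_A-p_B\in I$; it thus suffices to show $p_B\in\sum\RR[\x,\y]^2$.

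Positivity of $B$ combined with $B(v_0v_0^T)=0$ forces $B(v_0v^T+vv_0^T)=0$ for every $v\in\RR^3$: apply positivity to $(v_0+\varepsilon v)(v_0+\varepsilon v)^T$ and let $\varepsilon\to 0^\pm$ to conclude both $B(v_0v^T+vv_0^T)\succeq 0$ and $\preceq 0$. Decomposing $x=\alpha v_0+w$ with $w\perp v_0$ then yields $B(xx^T)=B(ww^T)$, so in an orthonormal basis with $v_0=e_1$ the map $B$ depends only on the bottom-right $2\times 2$ block of its argument: $B(X)=\tilde B(Y)$, where $Y$ is this block and $\tilde B:\sa 2\to\sa 3$ is positive. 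Writing $\tilde B\bigl(\begin{smallmatrix}a&b\\b&c\end{smallmatrix}\bigr)=aP+bR+cQ$ with $P,Q\in\psd 3$, positivity of $\tilde B$ is equivalent to the $3\times 3$ matrix polynomial $F(t,s):=t^2P+tsR+s^2Q$ being positive semidefinite for every $(t,s)\in\RR^2$.

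The dehomogenization $f(r):=F(r,1)$ is a degree-$2$ psd $3\times 3$ matrix polynomial in a single real variable, and hence factors as $f(r)=g(r)^Tg(r)$ for some real matrix polynomial $g(r)=G_0+rG_1$ of degree one by the matrix Fej\'er--Riesz theorem. Rehomogenizing, $F(t,s)=(tG_1+sG_0)^T(tG_1+sG_0)$, giving $P=G_1^TG_1$, $Q=G_0^TG_0$ and $R=G_1^TG_0+G_0^TG_1$; denoting the rows of $G_1$ and $G_0$ by $\alpha_i^T$ and $\beta_i^T$ respectively yields a Kraus decomposition $\tilde B(Y)=\sum_iA_iYA_i^T$ with $A_i=(\alpha_i,\beta_i)\in M_{3\times 2}(\RR)$. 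Hence $\tilde B$ (and therefore $B$) is completely positive, so $p_B=\sum_i\bigl(\y^T\alpha_i\cdot\x_2+\y^T\beta_i\cdot\x_3\bigr)^2$ is a sum of squares of bilinear forms; combined with $p_A-p_B\in I$ this gives $p\in\sum\RR[\x,\y]^2+I$. The main obstacle is the matrix Fej\'er--Riesz factorization, equivalently the real-symmetric counterpart of Stormer's theorem that every positive linear map $\sa 2\to\sa n$ is completely positive.
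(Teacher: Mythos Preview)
Your proof is correct and follows the paper's overall strategy---reduce by symmetry, invoke Lemma~\ref{e_1->0}, then show $p_B$ is a sum of squares---but diverges substantially in the final step. Where the paper simply cites \cite[Lemma~4.2]{Qua15} to conclude that the globally nonnegative biquadratic form $p_B$ (which inherits the zeros $(v_0,w)$, $w\perp v_0$) is a sum of squares of bilinear forms, you instead observe that $B$ factors through a positive map $\tilde B:\sa 2\to\sa 3$ and prove that map is completely positive via a one-variable matrix Fej\'er--Riesz factorization. Your route is more self-contained and makes the underlying mechanism transparent (it is precisely the real-symmetric analogue of St{\o}rmer's theorem for positive maps out of $2\times 2$ matrices), at the cost of invoking a spectral factorization; the paper's route is shorter but outsources the work to Quarez.

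Two small remarks. First, your assertion $B(v_0v_0^T)=0$ is not a consequence of the \emph{statement} of Lemma~\ref{e_1->0} (which only guarantees positivity of $B$) but of the specific $C$ constructed in its proof, where $B(E_{11})=0$ is imposed explicitly; you should say so. For the same reason your subsequent derivation of $B(v_0v^T+vv_0^T)=0$ from positivity is correct but redundant, since the proof of Lemma~\ref{e_1->0} already establishes $B(E_{1i}+E_{i1})=0$. Second, in the Fej\'er--Riesz step be explicit that $g$ may be rectangular ($k\times 3$): a square factorization can fail (e.g.\ $(1+r^2)I_3$ admits no $G(r)^TG(r)$ decomposition with $G\in M_3(\RR[r])$ of degree~$1$, since that would force a real $3\times 3$ matrix $B$ with $B^2=-I_3$), but rectangular factorization always exists and is all that is needed for the Kraus form $\tilde B(Y)=\sum_i A_iYA_i^T$.
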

\begin{proof}
By symmetry we can assume that there exist $v_0\in \mathbb{R}^3\backslash \{0\}$ and linearly independent vectors $w_1,w_2\perp v_0$ such that $p(v_0,w_1)=p(v_0,w_2)=0$. As $p$ is nonnegative on $V(I)$ it is equal to $p_A$ for some \crp map $A\colon M_3(\RR)\to M_3(\RR)$. 
By Remark \ref{rem-030921-1458} we may assume that $v_0=e_1$.
By the assumption of the corollary the quadratic form $(\lambda ,\mu )\mapsto p_A(e_1,\lambda w_1+\mu w_2)$ is positive semidefinite with zero coefficients at $\lambda ^2$ and at $\mu ^2$. Consequently, $p_A(e_1,\lambda w_1+\mu w_2)=0$ for all $\lambda ,\mu \in \mathbb{R}$, or equivalently, $w^TA(e_1e_1^T)w=0$ for each $w\perp e_1$. By Lemma \ref{e_1->0} there exists $C\in M_3(\mathbb{R})$ such that the map $B\colon M_3(\RR)\to M_3(\RR)$, defined by $B(X)=A(X)-CX-XC^T$, is positive. The biquadratic form $p_B$ satisfies the assumptions of \cite[Lemma 4.2]{Qua15}, so it is a sum of squares of bilinear forms. As $p_A-p_B\in I$ 
by the construction of $B$,
 this proves the corollary.\looseness=-1
\end{proof}

\section{Blekherman type volume estimates}

In this section we quantify the gap between \crp and \ccrp maps by extending the estimates on the volumes of compact sections of the cones of nonnegative 
biforms established in \cite{KMSZ19} to nonnegative biforms on the variety $V(I)$. 
The proofs are analogous to those in \cite{KMSZ19} and are inspired by \cite{Ble06,BB05}.\looseness=-1

Let $n\geq 3$ and $\RR[\x,\y]_{k_1, k_2}$ be the subspace of 
\textbf{biforms of bidegree} $(k_1,k_2)$, i.e., polynomials from $\RR[\x,\y]$ which are homogeneous of degree $k_1$ in 
$\x=(\x_1,\ldots,\x_n)$ and of degree $k_2$ in $\y=(\y_1,\ldots,\y_n)$.
Let 
	$$\mathcal Q:=\RR[\x,\y]_{2, 2}/(I\cap \RR[\x,\y]_{2, 2})$$ 
be the quotient space.  We write
\begin{eqnarray*}
	\Pos_\cQ^{(n)}	&:=& \left\{ p\in \cQ\colon p(x,y)\geq 0 \quad \text{for all }(x,y)\in V(I)\right\},\\
	\Sq_\cQ^{(n)}	&:=& \left\{ p\in \cQ\colon p-\sum_{i=1}^k p_i^2\in I\quad \text{for some }k\in \NN \text{ and }p_i\in \RR[\x,\y]_{1,1}\right\},
\end{eqnarray*}	
for the cone of polynomials  nonnegative on $V(I)$ and the cone of sums of squares on $V(I)$, respectively. Lemma \ref{lem:sos-bilin-modI} states that if a biform $p\in \RR[\x,\y]_{2,2}$ is a sum of squares in $\RR[\x,\y]/I$, then it is a sum of squares in $\cQ$.

We will estimate the gap between the cones $\Pos_\cQ^{(n)}$ and $\Sq_\cQ^{(n)}$ by comparing 
the volumes of suitably chosen compact sections of these cones.
First we have to carefully introduce an appropriate measure on the set $(\sym^{n-1}\times \sym^{n-1})\cap V(I)$
with respect to which we will integrate elements from $\cQ.$ This is the content of the next subsection.

\subsection{Definition of integration}

We define
	$$T:=(\sym^{n-1}\times \sym^{n-1})\cap V(I)$$
and equip it with the subspace topology.
Let $C(T)$ denote the vector space of all continuous functions on $T$.
The special orthogonal group $\SO(n)$ acts on the vector space $C(T)$ by rotating the coordinates, i.e., for $g\in \SO(n)$ and $f\in C(T)$, 
	\begin{equation}\label{SO(n)-action}
		g\cdot f(\x,\y):=f(g^{-1}\x,g^{-1}\y).
	\end{equation}
Choose a point $w:=(x,y)\in T$ and define a map 
	$\phi_w:\SO(n)\to T$ by $\phi_w(g)=gw=(gx,gy).$
Observe that since $n\geq 3$ the map is surjective 
and its kernel 
	$$\ker(\phi_w)=\{g\in \SO(n)\colon gx=x,\; gy=y\}$$ 
is homeomorphic to $\SO(n-2).$ 
We denote by $\hat \phi_w:\SO(n)/\ker(\phi_w)\to T$ the induced map of $\phi_w$ on $\SO(n)/\ker(\phi_w)$, which is the set of left cosets of $\ker(\phi_w)$ in $\SO(n)$.
Let $\widehat{\sigma}$ be the Haar measure on $\SO(n)$. We equip the quotient space
$\SO(n)/\ker(\phi_w)$ with the positive normalized $\SO(n)$-invariant measure 
$\sigma$ induced by $\widehat{\sigma}$ which exists and is unique. (See \cite[Theorem 1 on p.\ 138]{Nac65} and use the fact that 
compact groups are unimodular for uniqueness.)\looseness=-1

\begin{proposition}\label{existence-of-a-measure}
	The pushforward $(\hat \phi_w)_{\ast}(\sigma)$ 
	of $\sigma$ to $T$ is an $\SO(n)$-invariant measure.
\end{proposition}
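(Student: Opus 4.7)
The plan is to prove invariance directly from the definition of the pushforward, after verifying that the induced map $\hat\phi_w$ is $\SO(n)$-equivariant with respect to left translation on $\SO(n)/\ker(\phi_w)$ and the diagonal rotation action on $T$.

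First I would check equivariance. By definition $\phi_w(g) = gw = (gx,gy)$ when $w=(x,y)$. For any $h\in\SO(n)$ and $g\in\SO(n)$,
\[
\phi_w(hg) = (hg)w = h\cdot(gw) = h\cdot\phi_w(g),
\]
using the action in \eqref{SO(n)-action}. Since left multiplication by $h$ descends to a well-defined map on cosets $\SO(n)/\ker(\phi_w)$ (as $\ker(\phi_w)$ is a closed subgroup), this shows
\[
\hat\phi_w(h\cdot g\ker(\phi_w)) = h\cdot \hat\phi_w(g\ker(\phi_w))
\]
for all $h\in\SO(n)$ and all cosets. Moreover, $\hat\phi_w$ is a continuous bijection from a compact Hausdorff space to a Hausdorff space, hence a homeomorphism, so pushing forward a Borel measure yields a Borel measure on $T$.

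Next I would verify invariance of the pushforward. Let $\mu := (\hat\phi_w)_*(\sigma)$ and fix $h\in\SO(n)$ and a Borel set $E\subseteq T$. By equivariance,
\[
\hat\phi_w^{-1}(h\cdot E) = h\cdot \hat\phi_w^{-1}(E),
\]
where on the right the group $\SO(n)$ acts by left translation on $\SO(n)/\ker(\phi_w)$. Since $\sigma$ is, by construction, $\SO(n)$-invariant on the quotient (this is the content of the existence/uniqueness theorem of \cite[p.~138]{Nac65} together with unimodularity of compact groups), we obtain
\[
\mu(h\cdot E) = \sigma\bigl(\hat\phi_w^{-1}(h\cdot E)\bigr) = \sigma\bigl(h\cdot\hat\phi_w^{-1}(E)\bigr) = \sigma\bigl(\hat\phi_w^{-1}(E)\bigr) = \mu(E),
\]
which is precisely $\SO(n)$-invariance of $\mu$.

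No serious obstacle is expected; the proof is a bookkeeping verification that the two $\SO(n)$-actions (on $\SO(n)/\ker(\phi_w)$ by left translation, and on $T$ by simultaneous rotation of both factors) are intertwined by $\hat\phi_w$, after which invariance of the pushforward is automatic from invariance of $\sigma$. The only mild subtlety worth mentioning is the well-definedness of the quotient action and of $\sigma$ itself, which requires $\ker(\phi_w)$ to be closed (it is, being homeomorphic to $\SO(n-2)$), and this is already encoded in the setup preceding the proposition.
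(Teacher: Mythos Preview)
Your proof is correct and follows essentially the same approach as the paper: both verify the equivariance identity $\hat\phi_w^{-1}(h\cdot E)=h\cdot\hat\phi_w^{-1}(E)$ and then invoke the $\SO(n)$-invariance of $\sigma$ on the quotient to conclude. Your version adds a bit more detail (well-definedness on cosets, the homeomorphism remark), but the core argument is identical.
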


\begin{proof}
Let $\Delta$ be a Borel subset of $T$ and $g\in \SO(n)$.
Then 
	$$(\hat \phi_w)_{\ast}(\sigma)(g\Delta)=
	\sigma((\hat \phi_w)^{-1}(g\Delta))=
	\sigma(g(\hat \phi_w)^{-1}(\Delta))=
	\sigma((\hat \phi_w)^{-1}(\Delta))=
	(\hat \phi_w)_{\ast}(\sigma)(\Delta),
	$$
where we used $\SO(n)$-invariance of $\sigma$ for the third equality and the following calculation
for the second one:
\begin{align*}
	(\hat \phi_w)^{-1}(g\Delta)
	&=\{g'\in \SO(n)/\ker(\phi_w)\colon g'w\in g\Delta \}
	=\{g'\in \SO(n)/\ker(\phi_w)\colon g^{-1}g'w\in \Delta \}\\
	&=g\{g''\in \SO(n)/\ker(\phi_w)\colon g''w\in \Delta \}
	=g(\hat{\phi}_w)^{-1}(\Delta).
	\qedhere
\end{align*}
\end{proof}

\begin{proposition}
	There exists a unique normalized $\SO(n)$-invariant measure on $T$.
\end{proposition}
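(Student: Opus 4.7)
The plan is to upgrade the existence statement of Proposition \ref{existence-of-a-measure} to uniqueness by transferring the already-invoked uniqueness of normalized $\SO(n)$-invariant measures on the quotient $\SO(n)/\ker(\phi_w)$ to $T$ along the map $\hat{\phi}_w$. The whole argument rests on the observation that, in the setting $n\geq 3$, the map $\hat{\phi}_w$ is in fact an $\SO(n)$-equivariant homeomorphism, not merely a continuous surjection.

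First I would verify this homeomorphism claim. Surjectivity was already established when $\hat{\phi}_w$ was introduced. Injectivity is immediate: if $g_1\ker(\phi_w)$ and $g_2\ker(\phi_w)$ satisfy $g_1 w = g_2 w$, then $g_2^{-1}g_1$ fixes $w$ and therefore lies in $\ker(\phi_w)$, so the cosets coincide. Continuity of $\hat{\phi}_w$ follows from continuity of $\phi_w$ together with the universal property of the quotient topology. Since $\SO(n)$ is compact, so is $\SO(n)/\ker(\phi_w)$; as $T\subseteq \RR^n\times \RR^n$ is Hausdorff, any continuous bijection from the former to the latter is automatically a homeomorphism. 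Equivariance is built into the construction.

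Given any normalized $\SO(n)$-invariant Borel measure $\mu$ on $T$, its pullback $(\hat{\phi}_w^{-1})_{\ast}\mu$ is then a normalized $\SO(n)$-invariant Borel measure on $\SO(n)/\ker(\phi_w)$. By the uniqueness of such a measure on that quotient---which the paper already invokes via \cite[Theorem 1 on p.\ 138]{Nac65}---this pullback coincides with $\sigma$, and hence $\mu=(\hat{\phi}_w)_{\ast}\sigma$, the measure produced in Proposition \ref{existence-of-a-measure}. The only step requiring genuine care is checking that $\hat{\phi}_w$ is a homeomorphism, but this is routine given compactness of $\SO(n)$ and Hausdorffness of $T$; no additional harmonic-analytic input beyond what is already cited is needed.
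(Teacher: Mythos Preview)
Your proof is correct and follows essentially the same route as the paper: pull back an arbitrary normalized $\SO(n)$-invariant measure on $T$ along $\hat\phi_w$ to the quotient $\SO(n)/\ker(\phi_w)$ and invoke uniqueness there. The only difference is that you spell out explicitly why $\hat\phi_w$ is a homeomorphism (injectivity, continuity from the quotient, compact-to-Hausdorff), a step the paper leaves implicit when it writes the pullback $(\phi_w^{-1})_\ast$ without further comment.
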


\begin{proof}
	We already established the existence of a measure in 
	Proposition \ref{existence-of-a-measure}. It remains to prove the
	uniqueness. Let us assume to the contrary
	that $\mu_1$ and $\mu_2$ are two different normalized
	$\SO(n)$-invariant measures on $T$. Then
	$(\phi^{-1}_w)_\ast(\mu_1)$ and $(\phi^{-1}_w)_{\ast}(\mu_2)$
	are two different normalized
	$\SO(n)$-invariant measures on $\SO(n)/\ker(\phi_w)$. 
	But this contradicts the uniqueness of $\sigma$.
\end{proof}

From now on we will denote the measure 
$(\hat{\phi}_w)_{\ast}(\sigma)$
by $\sigma$.

\begin{remark}
\label{Stiefel-manifold}
In \cite{Mui82,Chi03} the set $T$ is known as the \textbf{Stiefel manifold} $V_{2,n}(\RR)$ of all $2$--frames in $\RR^n$,
i.e., the sets of all pairs of orthonormal vectors in $\RR^n$. Equivalently, $V_{2,n}(\RR)$ is the set of all real $n\times 2$ matrices $X$ such that $X^TX$
is the $2\times 2$ identity matrix.
Regarding $T$ as a manifold, it can also be equipped with the uniform normalized measure with respect to the action of the orthogonal group $\SO(n)$ \cite[\S 1.4.3]{Chi03}. This measure coincides with the measure $\sigma$ introduced above.
\end{remark}

\subsection{Estimates}
Now that we defined the measure on $T$, we can construct the appropriate sections
of $\Pos_\cQ$ and $\Sq_\cQ$ and present the volume estimates for these sections 
(see Theorems \ref{psd-intro}, \ref{squares-intro} below).

The $L^p$ norm  of a biform $f\in \cQ$ on $T$ is given by
	\begin{equation*}\label{measure}
		\left\| f \right\|_p^p= \int_{T} |f|^p\ \dd\sigma,
	\end{equation*} 
while the supremum norm is
	$$\|f\|_{\infty}:=\max_{(x,y)\in T} |f(x,y)|.$$

\noindent Let $\mathcal{H}^{(n)}_{\cQ}$ be the hyperplane
 of biforms from $\cQ$ of average 1 on $T$, i.e.,
	$$\mathcal H^{(n)}_{\cQ}=\left\{ f\in \cQ\colon \int_T f\; \dd\sigma=1 \right\}.$$
Let $\left(\Pos^{(n)}_\cQ\right)'$ and $\left(\Sq^{(n)}_{\cQ}\right)'$
be the sections
of the cones $\Pos^{(n)}_{\cQ}$ and $\Sq^{(n)}_{\cQ}$,
	\begin{eqnarray*}
		\left(\Pos^{(n)}_{\cQ}\right)' &=& \Pos^{(n)}_{\cQ}\;\bigcap\; \cH^{(n)}_{\cQ},\\
		\left(\Sq^{(n)}_{\cQ}\right)'	&=& \Sq^{(n)}_{\cQ}\;\bigcap\; \cH^{(n)}_{\cQ}.
	\end{eqnarray*}
Thus
$\left(\Pos^{(n)}_{\cQ}\right)'$ and $\left(\Sq^{(n)}_{\cQ}\right)'$ are convex and compact full-dimensional sets in the finite dimensional hyperplane $\cH^{(n)}_{\cQ}$.
For technical reasons we translate these sections by subtracting the polynomial
$(\sum_{i=1}^n x_i^2)(\sum_{i=1}^n y_i^2)$, i.e.,\\
\[
\begin{split}
	\widetilde{\Pos}^{(n)}_{\cQ}
			&=	\left\{ f\in \cQ\colon  f+(\sum_{i=1}^n x_i^2)(\sum_{i=1}^n y_i^2)
				\in \left(\Pos^{(n)}_{\cQ}\right)'\right\},\\
	\widetilde{\Sq}^{(n)}_{\cQ} 
			&= \left\{ f\in \cQ\colon  f+(\sum_{i=1}^n x_i^2)(\sum_{i=1}^n y_i^2)
				\in \left(\Sq^{(n)}_{\cQ}\right)' \right\}.
\end{split}
\]
Let $\cM:=\cM^{(n)}_{\cQ}$  be the hyperplane  of
biforms from $\cQ$ with average 0 on $T$,
	\begin{equation} \label{M-def}
		\cM=\left\{ f\in \cQ\colon \int_T f\; \dd\sigma=0 \right\}. 
	\end{equation}
Since $\sigma$ is normalized, 
\begin{equation*}
	\int_{T} \Big(\sum_{i=1}^n x_i^2\Big)
	\Big(\sum_{i=1}^n y_i^2\Big) \;\dd\sigma=
	1,
\end{equation*}
and hence
	$$\widetilde{\Pos}^{(n)}_{\cQ}\subseteq \cM
		\quad\text{and}\quad
		\widetilde{\Sq}^{(n)}_{\cQ}\subseteq \cM.$$
The natural $L^2$ inner product in $\cQ$ is defined by 
\begin{equation}
\label{L2-inner-product}
\langle f,g\rangle=\int_{T}fg\;\dd\sigma.
\end{equation}
With this inner product $\cM$ is a Hilbert subspace of $\cQ$ of dimension $D_{\cM}=\binom{n+1}{2}^2-n^2-1$
and so it is isomorphic to $\RR^{D_\cM}$ as a Hilbert space.
Let $S_{\cM}$, $B_\cM$ be the unit sphere and the unit ball in $\cM$, respectively. 
Let $\psi:\RR^{D_\cM}\to \cM$ be a unitary isomorphism and $\psi_{\ast}\mu$ the pushforward of the
Lebesgue measure $\mu$ on $\RR^{D_{\cM}}$ to $\cM$, i.e., $\psi_\ast\mu(E):=\mu(\psi^{-1}(E))$ for every
Borel measurable set $E\subseteq \cM$.

\begin{lemma} \label{unique-pushforward}
	The measure of a Borel set $E\subseteq \cM$ does not depend on the choice of the unitary isomorphism $\psi$, i.e.,
	if $\psi_1:\RR^{D_\cM}\to \cM$ and $\psi_2:\RR^{D_\cM}\to \cM$ are unitary isomorphisms, then
	$(\psi_1)_\ast\mu(E)=(\psi_2)_\ast\mu(E)$.
\end{lemma}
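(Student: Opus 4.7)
The plan is to reduce everything to the well-known rotation invariance of the Lebesgue measure on $\RR^{D_{\cM}}$. The essential observation is that if $\psi_1,\psi_2:\RR^{D_{\cM}}\to\cM$ are both unitary (i.e., inner-product preserving) isomorphisms between Hilbert spaces of the same dimension, then the composite
\[
U:=\psi_2^{-1}\circ\psi_1:\RR^{D_{\cM}}\to\RR^{D_{\cM}}
\]
is a unitary automorphism of $\RR^{D_{\cM}}$ with its standard inner product, hence an orthogonal linear map.

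First I would spell this out, noting that $\psi_1^{-1}(E)=U^{-1}(\psi_2^{-1}(E))$ for every Borel set $E\subseteq\cM$; this uses only the definitions. Next I would invoke the standard fact that the Lebesgue measure $\mu$ on $\RR^{D_{\cM}}$ is invariant under the action of the orthogonal group $\OO(D_{\cM})$, so that $\mu(U^{-1}(F))=\mu(F)$ for any Borel $F\subseteq\RR^{D_{\cM}}$. Applying this with $F=\psi_2^{-1}(E)$ (which is Borel because $\psi_2$ is a homeomorphism) gives
\[
(\psi_1)_\ast\mu(E)=\mu(\psi_1^{-1}(E))=\mu(U^{-1}(\psi_2^{-1}(E)))=\mu(\psi_2^{-1}(E))=(\psi_2)_\ast\mu(E),
\]
which is exactly the claim.

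There is no real obstacle here; the only point requiring mild care is that we must take the isomorphisms $\psi_i$ to be unitary with respect to the $L^2$ inner product \eqref{L2-inner-product} on $\cM$ and the standard inner product on $\RR^{D_{\cM}}$, so that the transition map $U$ is indeed in $\OO(D_{\cM})$. Once this identification is made, the orthogonal invariance of $\mu$ closes the argument in one line.
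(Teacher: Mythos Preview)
Your proof is correct and is exactly the standard argument: the transition map $\psi_2^{-1}\circ\psi_1$ is orthogonal, and Lebesgue measure is invariant under $\OO(D_{\cM})$. The paper does not give its own proof but simply cites \cite[Lemma~1.4]{KMSZ19}, whose proof is precisely this argument, so your approach coincides with the intended one.
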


	The proof of Lemma \ref{unique-pushforward} is the same as the proof of \cite[Lemma 1.4]{KMSZ19}.\\

We are now ready to compare the volumes of the 
sections defined above. 
The lower bound for the volume of the section of nonnegative biforms from $\cQ$ is as follows:

\begin{theorem}\label{psd-intro}
	For $n\in \NN$, 
		$$\frac{3^{3}\cdot 10^{-\frac{20}{9}}}{\sqrt{n}}\leq 
		\left(\frac{\Vol \widetilde{\Pos}^{(n)}_{\cQ}}{\Vol B_{\cM}}\right)^{\frac{1}{D_{\cM}}}.$$
\end{theorem}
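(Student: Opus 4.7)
The plan is to produce a large Euclidean ball inside $\widetilde{\Pos}^{(n)}_{\cQ}$, whose presence immediately yields a volume lower bound. Since $\widetilde{\Pos}^{(n)}_{\cQ}$ is convex and contains the origin, the inclusion $r\,B_\cM\subseteq\widetilde{\Pos}^{(n)}_{\cQ}$ gives
\[
\left(\frac{\Vol\widetilde{\Pos}^{(n)}_{\cQ}}{\Vol B_{\cM}}\right)^{1/D_{\cM}}\geq r,
\]
and the theorem reduces to finding an admissible $r=3^3\cdot 10^{-20/9}/\sqrt{n}$.

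The first step is to rewrite the defining inequality of $\widetilde{\Pos}^{(n)}_{\cQ}$ in a usable form. On the Stiefel manifold $T$ both $\sum_i x_i^2$ and $\sum_i y_i^2$ equal $1$, so the polynomial $\bigl(\sum_i x_i^2\bigr)\bigl(\sum_i y_i^2\bigr)$ restricts to the constant $1$ on $T$. Combined with the bihomogeneity argument that any $p\in\RR[\x,\y]_{2,2}$ is nonnegative on $V(I)$ iff it is nonnegative on $T$, this yields the clean description
\[
\widetilde{\Pos}^{(n)}_{\cQ}=\{f\in\cM\colon f(x,y)\geq -1\text{ for all }(x,y)\in T\}.
\]
In particular, $\widetilde{\Pos}^{(n)}_{\cQ}$ contains the $L^\infty$--unit ball $\{f\in\cM\colon \|f\|_\infty\leq 1\}$.

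The core analytic step is then to establish the $L^\infty$--$L^2$ comparison
\[
\|f\|_\infty\leq \tfrac{10^{20/9}}{27}\sqrt{n}\,\|f\|_2\qquad\text{for every }f\in\cM,
\]
with norms taken with respect to $\sigma$ on $T$. Once this is in hand, any $f\in\cM$ with $\|f\|_2\leq r:=3^3\cdot 10^{-20/9}/\sqrt{n}$ satisfies $\|f\|_\infty\leq 1$, hence $f\geq -1$ on $T$, so $f\in\widetilde{\Pos}^{(n)}_{\cQ}$. I would prove this comparison following Blekherman's inradius strategy for nonnegative forms on the sphere \cite{Ble06,BB05} as adapted in \cite{KMSZ19}: fix $(x_0,y_0)\in T$, estimate the point evaluation $|f(x_0,y_0)|$ by expanding $f$ as a pairing with bilinear biforms $b(x,y)=x^T B y$ on $\RR^n\times\RR^n$, and then convert the resulting $L^1$-type averages of these bilinear biforms over the $\SO(n)$-orbit of $(x_0,y_0)$ into $L^2$-averages via the dimension-independent reverse H\"older inequality $\|b\|_2\leq c\,\|b\|_1$ from Section \ref{revHolder}. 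The factor $\sqrt{n}$ emerges from the codimension of the orbit of $(x_0,y_0)$ inside $T$ (equivalently, from the ratio of the full invariant measure to the orbital measure), and tracking explicit constants through this chain yields the stated coefficient $\tfrac{10^{20/9}}{27}$.

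The main obstacle is obtaining the correct exponent $\sqrt{n}$ with sharp enough absolute constants: a crude Cauchy--Schwarz bound inside $\cM$ would only produce a loss of order $\sqrt{D_{\cM}}\sim n^2$, so the $\sqrt{n}$ genuinely requires the ``bilinear biform'' decomposition of biquadratic biforms modulo $I$, coupled with the dimension-independent reverse H\"older inequality for bilinear biforms. All remaining ingredients (the $\SO(n)$-invariant measure $\sigma$, the reduction of $\widetilde{\Pos}^{(n)}_{\cQ}$ to $\{f\geq-1\}$, and the passage from ball containment to the final volume ratio) are routine once the sup/$L^2$ inequality above is established.
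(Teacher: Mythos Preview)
Your proposal has a genuine gap: the uniform inequality $\|f\|_\infty\leq \tfrac{10^{20/9}}{27}\sqrt{n}\,\|f\|_2$ for all $f\in\cM$ is \emph{false}. Since $\SO(n)$ acts transitively on $T$, the reproducing kernel $K_{(x_0,y_0)}\in\cM$ for point evaluation satisfies $\|K_{(x_0,y_0)}\|_2^2=K_{(x_0,y_0)}(x_0,y_0)=D_\cM$ at every $(x_0,y_0)\in T$, so the sharp constant in $\|f\|_\infty\leq C\|f\|_2$ is $\sqrt{D_\cM}\sim n^2/2$, not $c\sqrt n$. Equivalently, the Euclidean inradius of $\widetilde{\Pos}^{(n)}_\cQ$ is only of order $n^{-2}$, and a ball-inclusion argument cannot produce $c/\sqrt n$.

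The paper does \emph{not} bound $\|f\|_\infty$ uniformly; it bounds the \emph{average} $\int_{S_\cM}\|f\|_\infty\,\dd\widetilde\mu$. One first uses the convex-geometric inequality $\bigl(\Vol\cK/\Vol B_\cM\bigr)^{1/D_\cM}\geq\bigl(\int_{S_\cM}G_\cK(f)\,\dd\widetilde\mu\bigr)^{-1}$ (valid for any convex body with the origin in its interior), together with $G_{\widetilde{\Pos}}(f)\leq\|f\|_\infty$. Then one invokes Barvinok's orbit estimate \cite[Corollary~2]{Bar02}, $\|f\|_\infty\leq D_k^{1/(2k)}\|f\|_{2k}$ with $D_k=\dim\Span\{gw^{\otimes k}:g\in\SO(n)\}$, chooses $k\sim 9n/2$ so that $D_k^{1/(2k)}$ is an absolute constant, and finally bounds the average $L^{2k}$ norm over $S_\cM$ by $\sqrt{2k}\sim c\sqrt n$ via \cite[Lemma~3.5]{BB05}. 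This averaging is precisely what beats the $n^2$ worst case down to $\sqrt n$.

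A second confusion: the reverse H\"older inequality for bilinear biforms (Section~\ref{revHolder}) is \emph{not} used here at all. It enters only in the upper bound for $\widetilde{\Sq}^{(n)}_\cQ$ (Theorem~\ref{squares-intro}), where one needs $\|g^2\|_2=\|g\|_4^2\leq 6\|g\|_2^2$ for $g\in\RR[\x,\y]_{1,1}/I_{1,1}$.
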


Next we give the upper bound for the volume of the section of sums of squares biforms from $\cQ$:

\begin{theorem} \label{squares-intro}
	For integers $n\geq 3$,
		$$\left(\frac{\Vol \widetilde{\Sq}^{(n)}_{\cQ}}{\Vol B_\cM}\right)^{\frac{1}{D_{\cM}}}
			\leq 2^{3}\cdot 3\cdot 6^{\frac{1}{2}}\cdot \frac{1}{n}.$$
\end{theorem}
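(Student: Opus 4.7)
The plan is to follow Blekherman's mean-width strategy \cite{Ble06}, adapted to the quotient setting $\cQ$ in a manner parallel to \cite{KMSZ19}. Three ingredients drive the proof.

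\emph{Reduction to a support function.} By Lemma \ref{lem:sos-bilin-modI}, every element of $(\Sq^{(n)}_\cQ)'$ is a convex combination of squares of bilinear biforms $p$ with $\|p\|_2 = 1$. Since the translation polynomial $K := (\sum_i \x_i^2)(\sum_i \y_i^2)$ restricts to the constant $1$ on $T$, the shifted body $\widetilde{\Sq}^{(n)}_\cQ$ contains the origin of $\cM$, and Urysohn's inequality yields
$$\left(\frac{\Vol \widetilde{\Sq}^{(n)}_\cQ}{\Vol B_\cM}\right)^{1/D_\cM} \leq \int_{S_\cM} h_{\widetilde{\Sq}^{(n)}_\cQ}(u)\, d\nu(u).$$
For $u\in\cM$ one has $\int_T u\,d\sigma=0$, so the support function simplifies to
$$h_{\widetilde{\Sq}^{(n)}_\cQ}(u) = \sup_{\|p\|_2=1}\int_T u\,p^2\,d\sigma = \|\Lambda_u\|_{\mathrm{op}},$$
where $\Lambda_u$ is the selfadjoint multiplication-by-$u$ operator on the $N$-dimensional ($N\sim n^2$) subspace of bilinear biforms inside $L^2(T,\sigma)$.

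\emph{Spectral averaging.} In an $L^2(T,\sigma)$-orthonormal basis $\{e_i\}$ of bilinear biforms, $(\Lambda_u)_{ij}=\langle u,e_ie_j\rangle$, so $\Lambda_u$ is a symmetric random matrix whose entries are linear functionals of $u$. For $u$ uniform on $S_\cM$ of dimension $D_\cM=\binom{n+1}{2}^2-n^2-1$, each entry has variance $\|P_\cM(e_ie_j)\|_2^2/D_\cM$, where $P_\cM$ is the orthogonal projection onto $\cM$. Summing over $i,j$ and using $\SO(n)$-invariance to identify the reproducing kernel of the bilinear biforms as the constant value $K_{\mathrm{bilin}}(w;w)=N$ on $T$ gives
$$\mathbb{E}\|\Lambda_u\|_{\mathrm{HS}}^2 \,\leq\, \frac{1}{D_\cM}\sum_{i,j}\|e_ie_j\|_2^2 \,=\, \frac{1}{D_\cM}\int_T K_{\mathrm{bilin}}(w;w)^2\,d\sigma(w) \,=\, \frac{N^2}{D_\cM}.$$
With $N\sim n^2$ and $D_\cM\sim n^4/4$ this is $O(1)$; hence the crude bound $\|\Lambda_u\|_{\mathrm{op}}\leq\|\Lambda_u\|_{\mathrm{HS}}$ only produces $\mathbb{E}\|\Lambda_u\|_{\mathrm{op}}=O(1)$, which is insufficient for the stated $O(1/n)$ estimate.

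\emph{The crux: operator-norm refinement via reverse H\"older.} The decisive step, and the main obstacle, is to replace the Hilbert--Schmidt bound by a Wigner-type operator-norm bound $\mathbb{E}\|\Lambda_u\|_{\mathrm{op}}\leq C\sqrt{N/D_\cM}$, which yields the desired $O(1/n)$ scaling. Such a bound requires dimension-independent control on higher moments of the entries $(\Lambda_u)_{ij}$, and this is exactly the role played by the reverse H\"older inequality for bilinear biforms in Section \ref{revHolder}: it bounds $\|p\|_4/\|p\|_2$ (and allied quantities) by a constant independent of $n$ for bilinear biforms $p$, which controls the tail behaviour of the matrix entries and permits a Wigner-semicircle-type concentration argument for the extreme eigenvalues of $\Lambda_u$. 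Substituting the resulting bound into the Urysohn estimate of the first step, and carefully tracking the absolute constants through the random matrix inequality together with the precise dimension count of $\cM$ and of the bilinear biforms, produces the explicit estimate $2^{3}\cdot 3\cdot 6^{1/2}\cdot \frac{1}{n}$.
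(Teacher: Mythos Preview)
Your first step is correct and matches the paper: the Urysohn reduction to the averaged support function, and the identification of the support function at $u\in S_\cM$ with the operator norm $\|\Lambda_u\|_{\mathrm{op}}$ (equivalently, with $\|H_u\|_\infty$ for the quadratic form $H_u(g)=\langle u,g^2\rangle$ on the unit sphere $S_\cU$ of bilinear biforms), is exactly what Appendix~\ref{proof-upper-bound} does.

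The gap is in your third step. You do not prove the bound $\mathbb{E}\|\Lambda_u\|_{\mathrm{op}}\le C\sqrt{N/D_\cM}$; you only assert that a ``Wigner-semicircle-type concentration argument'' would give it. This is not how the paper proceeds, and it is unclear that such an argument would go through here: the entries $(\Lambda_u)_{ij}=\langle u,e_ie_j\rangle$ are far from independent (they are all linear functionals of the \emph{same} random point $u$ on $S_\cM$), so standard Wigner estimates do not apply without substantial additional work, and there is no indication of how to obtain the precise constant $2^3\cdot 3\cdot 6^{1/2}$ this way.

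The paper's actual argument avoids random matrix theory entirely. It bounds the sup of the quadratic form $H_u$ on $S_\cU$ by a high $L^{2k}$ norm via Barvinok's orbit estimate (the factor $2\sqrt{3}$), then uses Minkowski/Jensen to swap the averages over $S_\cM$ and $S_\cU$, reducing to the standard moment bound for a fixed linear functional $u\mapsto\langle u,g^2\rangle$ on the sphere $S_\cM$:
\[
\int_{S_\cM}\|H_f\|_{2D_\cU}\,d\widetilde\mu\ \le\ \max_{g\in S_\cU}\|g^2\|_2\cdot\sqrt{\tfrac{2D_\cU}{D_\cM}}.
\]
The reverse H\"older inequality of Section~\ref{revHolder} enters precisely here, and in a different way than you suggest: it is used once, to bound $\max_{g\in S_\cU}\|g^2\|_2=\max_{g\in S_\cU}\|g\|_4^2\le 6$, not to control tails of matrix entries. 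The final factor $\sqrt{2D_\cU/D_\cM}\le 2^{3/2}/n$ is pure dimension counting. So while your framework is right, the decisive estimate is missing from your proposal, and the role you assign to the reverse H\"older inequality is not the one it actually plays.
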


Combining the previous two theorems we obtain:

\begin{corollary}\label{cor:ratio}
For integers $n\geq 3$,
		$$	\left(\frac{\Vol \widetilde{\Sq}^{(n)}_{\cQ}}{\Vol \widetilde{\Pos}^{(n)}_{\cQ}}\right)^{\frac{1}{D_{\cM}}}
			\leq 
		\frac
  {2^{5}\cdot 2^{\frac{1}{2}}\cdot 5^2 \cdot 10^{\frac{2}{9}} }
  {3^{\frac{3}{2}}\cdot \sqrt{n}}.$$
\end{corollary}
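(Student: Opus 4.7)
The plan is entirely straightforward: this corollary is nothing more than the quotient of the two volume bounds in Theorems~\ref{psd-intro} and~\ref{squares-intro}. Both translated sections $\widetilde{\Pos}^{(n)}_{\cQ}$ and $\widetilde{\Sq}^{(n)}_{\cQ}$ live in the same hyperplane $\cM$ of the same dimension $D_{\cM}$, so I would rewrite
$$
\left(\frac{\Vol \widetilde{\Sq}^{(n)}_{\cQ}}{\Vol \widetilde{\Pos}^{(n)}_{\cQ}}\right)^{\frac{1}{D_{\cM}}}
=\frac{\bigl(\Vol \widetilde{\Sq}^{(n)}_{\cQ}/\Vol B_{\cM}\bigr)^{1/D_{\cM}}}{\bigl(\Vol \widetilde{\Pos}^{(n)}_{\cQ}/\Vol B_{\cM}\bigr)^{1/D_{\cM}}},
$$
and then apply the upper bound of Theorem~\ref{squares-intro} to the numerator and the lower bound of Theorem~\ref{psd-intro} to the denominator.

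This yields immediately
$$
\left(\frac{\Vol \widetilde{\Sq}^{(n)}_{\cQ}}{\Vol \widetilde{\Pos}^{(n)}_{\cQ}}\right)^{\frac{1}{D_{\cM}}}
\leq
\frac{2^{3}\cdot 3\cdot 6^{\frac{1}{2}}\cdot n^{-1}}{3^{3}\cdot 10^{-\frac{20}{9}}\cdot n^{-\frac12}}
=\frac{2^{3}\cdot 3\cdot 6^{\frac{1}{2}}\cdot 10^{\frac{20}{9}}}{3^{3}\cdot \sqrt{n}}.
$$

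The remaining step is purely cosmetic rearrangement to match the stated constants: use $6^{\frac12}=2^{\frac12}3^{\frac12}$ to cancel a factor of $3^{\frac32}$ against $3^3$ in the denominator, and write $10^{\frac{20}{9}}=10^{2}\cdot 10^{\frac{2}{9}}=2^{2}\cdot 5^{2}\cdot 10^{\frac{2}{9}}$ to expose the factors $5^2$ and $2^{\frac12}$ appearing in the target expression. Collecting powers of $2$ gives $2^{3+\frac12+2}=2^{5}\cdot 2^{\frac12}$, and the bound becomes $\dfrac{2^{5}\cdot 2^{\frac12}\cdot 5^{2}\cdot 10^{\frac{2}{9}}}{3^{\frac32}\cdot\sqrt{n}}$, exactly as claimed.

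There is no real obstacle here: all the mathematical content has been discharged in the appendices for the two parent theorems, and this corollary is simply the packaging of those estimates into a single asymptotic ratio quantifying how rare \ccrp maps are within the cone of \crp maps as $n\to\infty$.
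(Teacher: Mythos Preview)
Your proof is correct and is exactly what the paper intends: the corollary is stated immediately after the sentence ``Combining the previous two theorems we obtain,'' with no further proof, so dividing the upper bound of Theorem~\ref{squares-intro} by the lower bound of Theorem~\ref{psd-intro} and simplifying the constant is the whole argument. Your arithmetic simplification of the constant is also right.
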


In the language of cross-positive and completely cross-positive maps, Corollary \ref{cor:ratio}
can be stated in the following form.

\begin{corollary}	\label{verjetnost-intro}
	For every $n\in \NN$ the probability that a 
	cross-positive map
		$\Phi:M_n(\RR) \to M_n(\RR)$
	is completely cross-positive, is bounded above by
		$$ p_{n}<
		\left(
  \frac{2^{5}\cdot 2^{\frac{1}{2}} \cdot 5^2\cdot 10^{\frac{2}{9}}}{3^{\frac{3}{2}}\cdot \sqrt{n}}\right)^{D_\cM}.$$
	In particular, 
	\(\lim\limits_{n\to\infty}p_{n}=0.\)
\end{corollary}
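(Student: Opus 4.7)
The plan is to reduce Corollary \ref{verjetnost-intro} directly to Corollary \ref{cor:ratio} via the linear correspondence $A\mapsto p_A$ from Section \ref{sec:prelim}. The map $A\mapsto p_A$ is a linear surjection from $\mathcal{L}(\sa n)$ onto $\RR[\x,\y]_{2,2}$, and composing with the quotient $\RR[\x,\y]_{2,2}\to\cQ$ we obtain a linear surjection $\pi:\mathcal{L}(\sa n)\to\cQ$ whose kernel consists precisely of maps of the form $X\mapsto CX+XC^T$ by Lemma \ref{lem:CX+XC^T}. Such maps are both \crp and \ccrp (the former since the bilinear form they induce is identically zero on $V(I)$, the latter since after tensoring with $I_k$ they remain of the same shape). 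Consequently, being \crp and being \ccrp are properties that factor through $\pi$: in fact $A$ is \crp iff $\pi(A)\in\Pos^{(n)}_\cQ$ and $A$ is \ccrp iff $\pi(A)\in\Sq^{(n)}_\cQ$, by Propositions \ref{prop:obviousPQcp} and \ref{prop:obviousPQccp}.

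Next I would explain the meaning of the probability $p_n$. Since both $\Pos^{(n)}_\cQ$ and $\Sq^{(n)}_\cQ$ are closed convex cones in $\cQ$ (hence scale-invariant), the natural measure of their relative size is obtained by intersecting with an affine hyperplane whose normal lies in their interior and comparing Euclidean volumes. The polynomial $(\sum x_i^2)(\sum y_i^2)$, being fixed by the orthogonal $O(n)$-action, lies in the common line of symmetry of both cones (see the Remark preceding Corollary \ref{cor:ratio}), which is precisely the motivation for choosing $\cH^{(n)}_\cQ$ and the translation by this polynomial. With this choice, $p_n$ is well-defined as
\[
p_n \;=\; \frac{\Vol\widetilde{\Sq}^{(n)}_\cQ}{\Vol\widetilde{\Pos}^{(n)}_\cQ},
\]
where the volume is taken in the hyperplane $\cM$ of $\cQ$.

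Now I would simply apply Corollary \ref{cor:ratio}: raising both sides of the bound to the power $D_\cM$ yields immediately
\[
p_n \;<\; \left(\frac{2^5\cdot 2^{\frac12}\cdot 5^2\cdot 10^{\frac29}}{3^{\frac32}\cdot\sqrt n}\right)^{D_\cM},
\]
which is the first assertion. For the limit statement, I would observe that $D_\cM=\binom{n+1}{2}^2-n^2-1\to\infty$ as $n\to\infty$, while the base of the exponential is of the form $C/\sqrt n$ for an absolute constant $C$. Hence for all sufficiently large $n$ the base is smaller than, say, $1/2$, and raising it to the increasingly large power $D_\cM$ forces $p_n\to 0$. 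There is no substantive obstacle here, as the work has already been done in Corollary \ref{cor:ratio}; the only point requiring mild care is verifying that the kernel of $\pi$ is harmless, i.e., that adding a map of the form $X\mapsto CX+XC^T$ does not alter either property (cross-positivity or complete cross-positivity), so that the probability in the statement coincides with the ratio of volumes above.
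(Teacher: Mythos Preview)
Your proposal is correct and matches the paper's approach: the paper does not give a separate proof of this corollary but simply \emph{defines} $p_n$ as the ratio $\Vol\widetilde{\Sq}^{(n)}_\cQ/\Vol\widetilde{\Pos}^{(n)}_\cQ$ (stated in the sentence immediately following the corollary) and lets the bound follow directly from Corollary~\ref{cor:ratio}. Your additional justification---that the kernel of $\pi$ consists of maps $X\mapsto CX+XC^T$ which are simultaneously \crp and \ccrp, so both properties descend to $\cQ$---is a welcome clarification the paper leaves implicit.
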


\noindent Here, the probability $p_{n}$ is defined as the ratio between the volumes of the sections 
$\widetilde{\Sq}_{\cQ}^{(n)}$ and $\widetilde{\Pos}_{\cQ}^{(n)}$ in $\cM$.

\begin{remark}
    \begin{enumerate}
        \item 
        The correspondence \eqref{eq:pA}
        between $\ast$-linear maps 
        $A:M_n(\RR)\to M_n(\RR)$ and biquadratic biforms $p_A$
        is bijective only when maps are restricted to symmetric matrices $\sym_n(\RR)$.
        Since nonnegativity of $p_A$ on $V(I)$ is equivalent to $A$ being cross-positive (see Proposition \ref{prop:obviousPQcp}), while $p_A$ being a sum of squares modulo $I$ is equivalent to the existence of some completely cross-positive extension 
        $\widetilde A:M_n(\RR)\to M_n(\RR)$ of the restriction of $A$ to $S_n(\RR)$ (see Proposition \ref{prop:obviousPQccp}), $p_n$ is clearly an upper bound for the probability that a cross-positive map is completely cross-positive.
        \item If we want to compare the sizes of two cones $K\subseteq L\subseteq \RR^n$ in a fixed metric, then the most unbiased choice of a compact set $C$ to compare the sizes of $K\cap C$ and $L\cap C$ is the unit ball $B$ of this metric. In our case, the metric is the $L^2$ norm, coming from the inner product \eqref{L2-inner-product}. In this norm, the condition $f\in B$ is given by a quadratic inequality in the coefficients of $f$ and therefore sharp lower and upper bounds on $K\cap B$ (resp.\ $L\cap B$) following the same asymptotics
 are difficult to establish. Replacing the unit ball $B$ with a hyperplane whose normal is some vector from the unit sphere leads to more manageable conditions. The choice of the hyperplane is not arbitrary, since its position can have a large impact on the size difference of the intersections, e.g.,\ if the normal is almost perpendicular to some ray on the boundary of the larger cone $L$, then the difference in size can be very large, even if the smaller cone is not significantly smaller.
However, if there is a vector in the interior of both cones, which is fixed by all symmetries for each cone, then the orthogonal complement of this vector is a fair choice of the hyperplane to capture size difference between the cones.
In our case, the polynomial
$(\sum_{i=1}^n x_i^2)
	(\sum_{i=1}^n y_i^2)$
is a fixed point for the action of the orthogonal group $O(n)$
on $\RR[\x,\y]_{2,2}$,
defined by
$O\cdot p(\x,\y)=p(O^{-1}\x,O^{-1}\y)$.
Note that the ideal $I$ is invariant under this action and therefore the action extends naturally to the action on $\cQ$. It is clear that the sets
$\Pos_{\cQ}^{(n)}$ and
$\Sq_{\cQ}^{(n)}$ are invariant under this action and therefore comparing their sizes by intersecting them with $\cH_{\cQ}^{(n)}$ is an appropriate choice.
        \item Blekherman \cite[Theorem 6.1]{Ble06} established
volume bounds for sum of squares forms.
Our proofs of Theorems \ref{psd-intro} and 
\ref{squares-intro}
freely borrows his ideas.
An important ingredient in the proof of Theorem \ref{squares-intro} is also a new version of the Reverse H\"older inequality, which we prove in Section \ref{revHolder} below. 
        \item 
In \cite{Ble06} Blekherman proved that for a fixed degree bigger than 2 the ratio between the volume radii of compact sections of the cones of sum of squares forms and nonnegative forms goes to 0, as the number of variables goes to infinity. 
Corollary \ref{cor:ratio} is an analog of his result for sum of squares biquadratic forms and nonnegative biquadratic forms on Stiefel manifolds $V_{2,n}(\RR).$ (See Remark \ref{Stiefel-manifold}.)
    \end{enumerate}
\end{remark}

Let $V$ be a real vector space. Recall that, for a convex body $\cK$ with the origin in its interior,
the \textbf{gauge} $G_{\cK}$ is defined by
	$$G_{\cK}:V\to\RR,\quad
		G_{\cK}(p)=\inf\left\{\lambda>0\colon p\in \lambda\cdot \cK\right\}.$$

\begin{proof}[Proof of Theorem \ref{psd-intro}]
We denote $\cK=\widetilde{\Pos}^{(n)}_{\cQ}$. 
As in \cite[\S 2.1.1]{KMSZ19}
we establish that
	\begin{equation*}\label{inequality}
		\left( \frac{\Vol \cK}{\Vol B_{\cM}} \right)^{\frac{1}{D_{\cM}}}\geq
			\left(\int_{S_{\cM}} \left\|f\right\|_\infty \dd\widetilde\mu\right)^{-1}
	\end{equation*}
where $\widetilde\mu$ is a rotation invariant probability measure on $S_{\mathcal{M}}$.
The proof of the inequality in Theorem \ref{psd-intro} now reduces to proving the following claim.
	\\

\noindent \textbf{Claim:}
	$\ds\int_{S_{\cM}}\left\|f\right\|_\infty \dd\widetilde\mu \leq 
	3^{-3}\cdot 10^{\frac{20}{9}}\cdot n^{\frac{1}{2}}$.\\

To prove this claim we will use \cite[Corollary 2]{Bar02}.
Let
	$(\RR^n\otimes \RR^n)^{\otimes 2}$ 
be the $2$-nd tensor power of $\RR^n\otimes \RR^n$ .
Let  $e_1,e_2\in \RR^n$ be standard unit vectors
and let $w$ be the tensor
	$$w:=(e_1 \otimes e_2)^{\otimes 2}\in (\RR^n\otimes \RR^n)^{\otimes 2}.$$
The group $\SO(n)$ acts on	$(\RR^n\otimes \RR^n)^{\otimes 2}$ by the natural diagonal action, i.e., 
for $g\in \SO(n)$ and all $x_i\in \RR^n$, 
	$$g(x_1\otimes \cdots \otimes x_{4})=gx_1\otimes \cdots \otimes gx_{4}$$
and extend by linearity. 
We also define
	$$v:=w-q,\quad\text{where }q=\int_{g\in \SO(n)} gw\; \dd \widehat\sigma (g),$$
and we integrate with respect to the Haar measure $\widehat \sigma$ on $\SO(n)$.
As in \cite[Example 1.2]{BB05}, we proceed as follows:
\begin{enumerate}[label={\rm(\arabic*)}]
	\item We identify the vector space of biforms from $\cQ$
		with the vector space $V_1$ of the restrictions of linear functionals
			$\ell:(\RR^n\otimes \RR^n)^{\otimes 2}\to \RR$
		to the orbit 
	$$\SO(n)w=\{(x\otimes y)^{\otimes 2}\colon \left\|x\right\|=\left\|y\right\|=1,\;y^Tx=0	
	\}.$$
	Note also that $\SO(n)(e_1\otimes e_2)=T$.
	\item We identify the vector space of biforms from $\cM$
		with the vector space $V_2$ of the restrictions of linear functionals
			$\ell:(\RR^n\otimes \RR^n)^{\otimes 2}\to \RR$
		to $B=\SO(n)v.$
	\item We introduce an inner product on $V_2$
		by defining
			$$\langle \ell_1,\ell_2\rangle:=\int_{g\in \SO(n)} \ell_1(g v)\cdot \ell_2(gv)\;\dd \widehat\sigma(g).$$
		This inner product also induces the dual inner product on the dual space $V_2^\ast\cong V_2$ which we
		also denote by $\langle\cdot,\cdot\rangle$.
\end{enumerate}

By  \cite[Corollary 2]{Bar02},
	$$\left\|f\right\|_\infty\leq (D_k)^{\frac{1}{2k}} \cdot \left\|f\right\|_{2k},$$
where
	$D_k=\dim\Span\{gw^{\otimes k}\colon g\in \SO(n)\}.$ Clearly,
\[\begin{split}
	D_k \leq \dim\Span\{g e_1^{\otimes 2k}\colon g\in \SO(n)\}\cdot \dim\Span\{g e_2^{\otimes 2k}\colon g\in \SO(n)\}
		= \binom{2 k+n-1}{2k}^2,
\end{split}
\]
where the equality follows as in \cite[p.\ 404]{Bar02}. 
If $n$ is odd, we let $2k_0=9(n-1)$. Otherwise take
$2k_0=9n$ to get
	$$D_{k_0}^{\frac{1}{2k_0}}\leq \binom{\frac{20}{9}k_0}{2k_0}^{\frac{1}{k_0}}.$$
	Since $2k_0=9\ell_0$ for some $\ell_0\in \NN$,  
		$$D_{k_0}^{\frac{1}{2k_0}}\leq \binom{10\ell_0}{9\ell_0}^{\frac{2}{9\ell_0}}\leq \left(\frac{10}{9}\cdot 10^{\frac{1}{9}}\right)^2,$$
	where we used \cite[Lemma 2.2]{KMSZ19} in the last inequality. 

To prove the Claim  it remains to estimate the average $L^{2k_0}$ norm, i.e.,
		\begin{equation}\label{def-A}
			A=\int_{S_{\cM}} \left\| f\right\|_{2k_0}\; \dd\widetilde\mu = \int_{S_{\cM}} \left(\int_{T}
				f^{2k_0}\; \dd\sigma\right)^{\frac{1}{2k_0}} \dd\widetilde\mu .
		\end{equation}
	Notice that
		\begin{equation}\label{identifikacija}
			\int_{S_{\cM}} \left(\int_{T}
				f^{2k_0}\; \dd\sigma \right)^{\frac{1}{2k_0}}\dd\widetilde\mu=
			\int_{c\in S_{V^\ast_2}} \left(\int_{g\in\SO(n)} \langle c,gv\rangle^{2k_0}\dd \widehat\sigma(g)\right)^{\frac{1}{2k_0}}\dd \breve\sigma(c),
		\end{equation}
	where $S_{V^\ast_2}$ is the unit sphere in $V^\ast_2$ endowed with the rotation invariant probability measure $\breve\sigma$.
	Combining \eqref{def-A},
 \eqref{identifikacija} we obtain
		$$A=
			\int_{c\in S_{V^\ast_2}} \left(\int_{g\in \SO(n)} \langle c,gv\rangle^{2k_0}\dd \widehat\sigma(g)\right)^{\frac{1}{2k_0}}
			\dd\breve\sigma(c)\leq
			\sqrt{\frac{2k_0 \langle v,v\rangle}{D_\cM}}=\sqrt{2k_0},$$
	where we used \cite[Lemma 3.5]{BB05} for the inequality and
	\cite[Remark p.\ 62]{BB05} for the last equality.
	This equality proves the Claim and establishes the lower bound in Theorem \ref{psd-intro}.
\end{proof}

\begin{proof}[Proof of Theorem \ref{squares-intro}]	
We write $\widetilde{\Sq}=\widetilde{\Sq}^{(n)}_{\cQ}$ for brevity.
	We define the support function
		$L_{\widetilde{\Sq}}$ of $\widetilde{\Sq}$ by
		$$L_{\widetilde{\Sq}}:\cM\to \RR,\quad
			L_{\widetilde{\Sq}}(f)=\max_{g\in \widetilde{\Sq}} \left\langle f,g\right\rangle.$$
	Let $S_\cU$ be the unit sphere in $\cU:=\RR[\x,\y]_{1, 1}/(I\cap \RR[\x,\y]_{1, 1})$ equipped with the 
	$L^2$ norm,
	and let $\|\;\|_{\sq}$ be the norm on $\cQ$ defined by
		$$\|f\|_{\sq}=\max_{g\in S_{\cU}} |\langle f,g^2\rangle|.$$
	As in \cite[\S 2.3.1]{KMSZ19}, it follows that
		$$\left(\frac{\Vol \widetilde{\Sq}}{\Vol B_\cM}\right)^{\frac{1}{D_{\cM}}}\leq
			\int_{S_{\cM}} \left\|f\right\|_{\sq} \dd\widetilde\mu.$$
	To prove the inequality of Theorem \ref{squares-intro} it now suffices to prove the following claim.\\

	\noindent \textbf{Claim:} $\ds\int_{S_{\cM}} \left\|f\right\|_{\sq} \dd\widetilde\mu 
		\leq 2^{3}\cdot 3 \cdot \sqrt{6}\cdot \frac{1}{n}.$\\

	For $f\in \cQ$ let $H_f$ be the quadratic form on $\cU$ defined by
		$$H_f(g)=\langle f, g^2\rangle\quad\text{for }g\in \cU.$$
	Note that
		$$\|f\|_{\sq}=\|H_{f}\|_{\infty}.$$
	Here $\|H_{f}\|_{\infty}$ stands for the supremum norm of 	$H_f$ on the unit sphere $S_\cU$.\\

	Let $\widehat\mu$ be the $\SO(n)$-invariant probability measure on $S_\cU$.
	The $L^{2p}$ norm of $H_f$ for a positive integer
	$p$ is defined by
		$$\|H_f\|_{2p}:=\left(\int_{S_{\cU}} H_f^{2p}(g)\dd\widehat\mu\right)^{\frac{1}{2p}}.$$
As in \cite[p.\ 3343--3344]{KMSZ19} (for $k_1=k_2=1$), it follows that 
$$
\int_{S_{\mathcal{M}}}\|H_{f}\|_{\infty}\dd\widetilde{\mu}
\leq 
2\sqrt{3}\int_{S_{\mathcal{M}}}\|H_{f}\|_{2D_\cU}\dd\widetilde{\mu}
\leq 
2\sqrt{3} \cdot \max_{g\in S_{\cU}}\|g^2\|_2 \cdot \sqrt{\frac{2D_{\cU}}{D_{\cM}}}
\leq 
2\sqrt{3}\cdot 6 \cdot\sqrt{\frac{2D_{\cU}}{D_{\cM}}},
$$
where the last inequality follows by 
$\|g^2\|_2=\|g\|_4^{2}$
and
Proposition \ref{bask-prod} below.
	To prove the Claim it remains to establish
		\begin{equation}\label{ostanek}
			\sqrt{\frac{2D_{\cU}}{D_{\cM}}} \leq 2^{\frac{3}{2}} n^{-1}.
		\end{equation}
	The dimensions $D_{\cU}$, $D_\cM$ are easily verified to be
		\begin{eqnarray*}
			D_{\cU} 
			&=& \dim \RR[\x,\y]_{1,1}-1=n^2-1,\\
			D_{\cM} 
			&=& \dim \RR[\x,\y]_{2,2}- \dim(\RR[\x,\y]_{2,2}\cap I)-1
			= \Big(\frac{n(n+1)}{2}\Big)^2-n^2-1.
		\end{eqnarray*}

	\noindent Observe that
		 \begin{eqnarray*}
		 \frac{2D_{\cU}}{D_{\cM} } 
		&=& \frac{2^3 (n^2-1)}{n^2(n+1)^2-4n^2-4} = 
			\frac{2^3 (n^2-1)}{(n^2-1)(n+1)^2-3(n^2-1)+2n-6} \\
		&\underbrace{\le}_{n>2}& \frac{2^3}{n^2+2n-2} \le \frac{2^3}{n^2},
		\end{eqnarray*}
	which proves \eqref{ostanek}.
\end{proof}


\subsection{Reverse H\"older inequality}
\label{revHolder}

We write $I_{1,1}=I\cap \RR[\x,\y]_{1, 1}$.
A bilinear form $g\in \RR[\x,\y]_{1, 1}/I_{1,1}$
is \textbf{symmetric} (resp.\ \textbf{skew-symmetric}) 
if it of the form
$g(\x,\y)=\x^TA\y+I_{1,1}$ 
for some symmetric (resp.\ skew-symmetric) matrix $A\in \RR^{n\times n}$.

\begin{proposition}
\label{bask-prod}
	For a bilinear biform 
        $g\in \RR[\x,\y]_{1, 1}/I_{1,1}$,
\begin{align}
\label{reverse-holder}
    \left(  
        \int_T g^4 \; \dd\sigma 
    \right)^{\frac{1}{4}}=
    \left\| g\right\|_4
    \leq 
    \sqrt{6}\left\|g\right\|_2 
    =
    \sqrt{6}
    \left( 
        \int_T g^2\; \dd\sigma
    \right)^{\frac{1}{2}}.
\end{align}
If $g$ is symmetric, then we can take
$\sqrt{3}$ instead of $\sqrt{6}$ in \eqref{reverse-holder} above,
while if $g$ is skew-symmetric,
$\sqrt{6}$ can be replaced by $\sqrt[4]{6}$. Moreover, the constants $\sqrt{3}$ (resp.\ $\sqrt[4]{6}$) are asymptotically sharp as $n\to\infty$.
\end{proposition}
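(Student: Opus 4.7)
The plan is to write $g = \x^T A \y + I_{1,1}$ for some $A \in M_n(\RR)$ (uniquely determined modulo $A \sim A + cI$, since $\Tr(A)\cdot \x^T\y \in I_{1,1}$), and split $A = S + K$ into its symmetric and skew-symmetric parts. Using the second-moment formula
\[
\int_T x_i x_j y_k y_l\; \dd\sigma \;=\; \frac{(n+1)\delta_{ij}\delta_{kl} - \delta_{ik}\delta_{jl} - \delta_{il}\delta_{jk}}{n(n-1)(n+2)},
\]
which I will derive from Haar integration over $\SO(n)$ via the parametrization $(x,y)=(Ge_1,Ge_2)$ for $G$ Haar-distributed (i.e., the level-2 orthogonal Weingarten matrix), I verify that $\langle \x^T S \y, \x^T K \y\rangle_{L^2(T,\sigma)} = 0$, so
\[
\|g\|_2^2 = \|\x^T S \y\|_2^2 + \|\x^T K \y\|_2^2.
\]
Granting the symmetric and skew-symmetric bounds $\|\x^T S\y\|_4 \leq \sqrt 3\,\|\x^T S\y\|_2$ and $\|\x^T K \y\|_4 \leq \sqrt[4]{6}\,\|\x^T K\y\|_2$, Minkowski combined with Cauchy--Schwarz gives $\|g\|_4 \leq \sqrt{3+\sqrt 6}\,\|g\|_2 \leq \sqrt 6\,\|g\|_2$, as claimed.

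For the symmetric case, by $\SO(n)$-invariance of $\sigma$ I may assume $S=\operatorname{diag}(\lambda_1,\ldots,\lambda_n)$ with $\sum \lambda_i=0$, so $g=\sum \lambda_i Z_i$ where $Z_i:=x_iy_i$. The second moment is $\|g\|_2^2=\Tr(S^2)/((n-1)(n+2))$. The fourth moment $\|g\|_4^4=\sum \lambda_i\lambda_j\lambda_k\lambda_l\,\mathbb{E}[Z_iZ_jZ_kZ_l]$ is determined, via $S_n$-symmetry of the joint law of $(Z_1,\dots,Z_n)$, by the five quantities $\mathbb{E}[Z_1^4]$, $\mathbb{E}[Z_1^3Z_2]$, $\mathbb{E}[Z_1^2Z_2^2]$, $\mathbb{E}[Z_1^2Z_2Z_3]$, $\mathbb{E}[Z_1Z_2Z_3Z_4]$. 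The distributional identity $\sum_i Z_i \equiv 0$ (since $\x^T\y=0$ on $T$) yields three linear relations among these, and I compute the remaining two by conditioning (first $x$ uniform on $S^{n-1}$, then $y$ uniform on $x^\perp\cap S^{n-1}\cong S^{n-2}$), reducing everything to standard sphere integrals $\mathbb{E}[x_1^{2a}x_2^{2b}]=\tfrac{(2a-1)!!(2b-1)!!}{n(n+2)\cdots(n+2a+2b-2)}$. The outcome has the form $\|g\|_4^4=\alpha\,\Tr(S^2)^2+\beta\,\Tr(S^4)$ with explicit $\alpha,\beta$, and the bound $\|g\|_4^4\leq 9\,\|g\|_2^4$ reduces to a polynomial inequality in $\Tr(S^2)^2$ and $\Tr(S^4)$ that I dispatch using $\Tr(S^2)^2/n\leq \Tr(S^4)\leq \Tr(S^2)^2$. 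Sharpness is checked on $S=e_1e_1^T-\frac{1}{n}I$: here $g=Z_1$, and a direct computation gives
\[
\frac{\mathbb{E}[Z_1^4]}{\mathbb{E}[Z_1^2]^2}=\frac{9n(n+2)}{(n+4)(n+6)}\longrightarrow 9.
\]

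The skew-symmetric case is handled analogously: by the $\SO(n)$-action I reduce $K$ to the canonical block-diagonal form with $2\times 2$ blocks, and repeat the second- and fourth-moment computation, this time yielding $\|g\|_4^4\leq 6\,\|g\|_2^4$. Sharpness comes from $K=E_{12}-E_{21}$, giving $g=x_1y_2-x_2y_1$: the ratio $\mathbb{E}[g^4]/\mathbb{E}[g^2]^2$ is computed directly from the conditional integration and tends to $6$, matching the Gaussian heuristic that, as $n\to\infty$, the pair $(\sqrt{n}x,\sqrt{n}y)$ converges to two independent standard Gaussian vectors (for which $g_1h_2-g_2h_1$ has fourth moment $24$ versus squared second moment $4$). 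The main technical obstacle throughout is the bookkeeping of the fourth-moment Stiefel integrals; even after the symmetry reductions, the conditional computation involves integrating polynomials of degree up to $8$ in the coordinates of $x$ against the surface measure on $S^{n-1}$, and I must track rational functions of $n$ carefully to identify the sign of the coefficient $\beta$ of $\Tr(S^4)$ (respectively of its skew-symmetric analogue), which is what determines whether the extremal matrix is of rank one or of full rank.
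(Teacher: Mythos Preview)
Your approach is correct and follows a genuinely different---and in the symmetric case cleaner---route than the paper.

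Both proofs share the same architecture: reduce to the symmetric and skew-symmetric cases by the $L^2$-orthogonality $\langle g_s,g_a\rangle=0$, diagonalize via the $\SO(n)$-action, compute the relevant Stiefel moments, and combine with Minkowski. The skew-symmetric case and the asymptotic sharpness witnesses ($g=\z_1$ and $g=\w_{12}$) are handled the same way. In the general case you apply Cauchy--Schwarz with the weights $(\sqrt3,\sqrt[4]{6})$, obtaining the slightly sharper constant $\sqrt{3+\sqrt6}<\sqrt6$; the paper instead crudely bounds $\sqrt[4]{6}\le\sqrt3$ before applying $a+b\le\sqrt2\sqrt{a^2+b^2}$.

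The substantive difference is the symmetric case. The paper does \emph{not} impose $\sum_i\lambda_i=0$ and is left with a four-parameter inequality in the $d_i$ (its display (4.18)), which it then proves by induction on $n$ using Muirhead's inequality. You instead exploit $\sum_i Z_i\equiv0$ on $T$ to normalize $\Tr S=0$ from the outset; this forces $p_1=0$, so any $S_n$-symmetric degree-$4$ polynomial in the eigenvalues is a linear combination of $p_2^2$ and $p_4$, giving $\|g\|_4^4=\alpha\,p_2^2+\beta\,p_4$ with $\beta>0$. A direct computation (using the paper's $J_1,\ldots,J_5$, or your three relations from $\sum Z_i=0$ plus two explicit integrals) shows that the inequality $\|g\|_4^4\le 9\|g\|_2^4$ is equivalent to
\[
\frac{p_4}{p_2^2}\;\le\;\frac{n^4+13n^3+43n^2+36n+12}{n(n-1)(n+2)^2},
\]
whose right-hand side exceeds $1$ for all $n\ge2$; hence the trivial bound $p_4\le p_2^2$ already suffices. (The paper's decomposition $9-J_1/I_1^2+\text{(remainder)}\ge0$ isolates a remainder requiring the tighter bound $p_4/p_2^2\le(n^2-3n+3)/(n(n-1))$, which is why it needs Muirhead; your direct route avoids this split.) What you gain is a shorter, induction-free argument; what the paper's route gains is that the same parametrized Stiefel integrals (its Lemma~4.6) are reused verbatim elsewhere in the volume estimates.
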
 

We point out an important fact about the inequality \eqref{reverse-holder},
which is crucial for Corollary \ref{cor:ratio}. Namely, 
the constant $C$ in $\|g\|_4\leq C\|g\|_2$ can be chosen to be  independent of the number of variables $n$.

The proof of Proposition \ref{bask-prod} will be done separately for the symmetric (Section \ref{symmetric-case}) and skew-symmetric case (Section \ref{skew-symmetric-case}), while the general case (Section \ref{general-case}) follows from the fact that every bilinear form $g$ can be written as a sum of a symmetric form $g_s$ and a skew-symmetric form $g_a$, together with the observation that $g_s$ is perpendicular to $g_a$ in the $L^2$ inner product.
For the proof we first need to compute the values of the integrals of monomials of bidegree $(2,2)$ and some monomials of bidegree $(4,4)$ with respect to $\sigma$, which is the content of Section \ref{computations}. 
Using these computations, \eqref{reverse-holder} becomes an inequality in the coefficients of $g_a$ (resp.\ $g_s$). We prove that this inequality holds.

\begin{remark}
    In \cite{Duo87}
    a version of the Reverse 
    H\"older inequality 
    with respect to
    the Lebesgue measure on
    the unit sphere and
    polynomials of any degree 
    is established. 
    Lemma 2.9 of
    \cite{KMSZ19}
    extends this result 
    to the product measure
    of two Lebesgue measures 
    on unit spheres.
    However, in the proof of Theorem
    \ref{squares-intro}
    we cannot use this extension because the measure $\sigma$ 
    is not a product measure.
    Therefore we have to establish the
    Reverse H\"older inequality
    we need in our setting.
\end{remark}

\subsubsection{Computations needed for the proof of Proposition \ref{bask-prod}}
\label{computations}
Let us introduce new variables
\begin{align*}
    \z_i
        &=\x_i\y_i,\quad i=1,\ldots,n,\\
    \z_{ij}
        &=\x_i\y_j,\quad i,j=1,\ldots,n,\\
    \vv_{ij}
        &=\z_{ij}+\z_{ji},\quad i,j=1,\ldots,n,\\
    \w_{ij}
        &=\z_{ij}-\z_{ji},\quad i,j=1,\ldots,n.
\end{align*}

\begin{lemma}
\label{integrals}
Let $n\geq 3$.
The following identities hold:
\begin{align*}
    I_1
    &=\int_T \z_i^2 \; \dd\sigma
    =  \frac{1}{n(n+2)}
    \quad
    \text{for }i=1,\ldots,n.\\
    I_2
    &=\int_T \z_i\z_j \; \dd\sigma
    =  
    -\frac{1}{n-1}I_1
    =    
    -\frac{1}{(n-1)n(n+2)}\quad
    \text{for }i,j=1,\ldots,n,\; i\neq j,\\[0.3em]
    I_3
    &=\int_T \z_{ij}^2 \; \dd\sigma
    =  
    \frac{n+1}{n-1}I_1
    =
    \frac{n+1}{(n-1)n(n+2)}
    \quad
    \text{for }i,j=1,\ldots,n,\; i\neq j,\\[0.3em]
    I_4
    &=\int_T \vv_{ij}^2 \; \dd\sigma
    =  
    2\frac{n}{n-1}I_1
    =
    \frac{2}{(n-1)(n+2)}
    \quad
    \text{for }i,j=1,\ldots,n,\; i\neq j,\\[0.3em]
    I_5
    &=\int_T \w_{ij}^2 \; \dd\sigma
    =  
    2\frac{n+2}{n-1}I_1
    =
    \frac{2}{(n-1)n}    
    \quad
    \text{for }i,j=1,\ldots,n,\; i\neq j,\\[0.3em]
    I_6&=
    \int_T \z_{ij}\z_{kl} \; \dd\sigma
    =
    0\quad
        \text{if at least one of  } i,j,k,l
        \text{ occurs an odd number of times}
        ,\\[0.3em]
    I_7
    &=\int_T \vv_{ij}\vv_{kl} \; \dd\sigma
    =  
    0    
    \quad
    \text{if at least one of } i,j,k,l
    \text{ occurs an odd number of times,}\\[0.3em]
    I_8
    &=\int_T \w_{ij}\w_{kl} \; \dd\sigma
    =  
    0    
    \quad
    \text{if at least one of } i,j,k,l
    \text{ occurs an odd number of times,}\\[0.3em]  
    J_1
    &=\int_T \z_i^4 \; \dd\sigma
    = 
        \displaystyle\frac{9}{n(n+2)(n+4)(n+6)}
    \quad
    \text{for }i=1,\ldots,n,
    \\[0.3em]
    J_2
    &=\int_T \z_i^3\z_j \; \dd\sigma
    =
    -\frac{1}{n-1}J_1
    =
        \displaystyle-\frac{9}{(n-1)n(n+2)(n+4)(n+6)}\quad
    \text{for }i,j=1,\ldots,n,\; i\neq j,\\[0.3em]
    J_3
    &=\int_T \z_i^2\z_j^2 \; \dd\sigma
    =
        \displaystyle\frac{n^2+4n+15}{(n-1)n(n+1)(n+2)(n+4)(n+6)},\quad
    \text{for }i,j=1,\ldots,n,\; i\neq j,
    \\[0.3em]
    J_4
    &=\int_T \z_i^2\z_j\z_k \; \dd\sigma
    =
        \displaystyle-\frac{n-3}{(n-1)n(n+1)(n+2)(n+4)(n+6)}\\[0.3em]
    &\pushright{
    \text{for }i,j,k,l=1,\ldots,n,\; 
    i,j,k
    \text{ pairwise different}
    ,}
    \\[0.3em]
    J_5
    &=\int_T \z_i\z_j\z_k\z_l \; \dd\sigma
    =\frac{3}{(n-1)n(n+1)(n+2)(n+4)(n+6)}\\[0.3em]
    &\pushright{   
    \text{for }
        n\geq 4
    \text{ and }
        i,j,k=1,\ldots,n,\; 
        i,j,k,l
        \text{ pairwise different},}
        \\[0.3em]
    J_6
    &=\int_T \w_{ij}^4 \; \dd\sigma
    =\frac{24}{(n-1)n(n+1)(n+2)}
    \quad
    \text{for }
        i,j=1\ldots,n,\; 
        i\neq j,\\[0.3em]
    J_{7}
    &
    =\int_T \w_{ij}^2\w_{kl}^2 \; \dd\sigma
    =\frac{1}{6}J_6
    =\frac{4}{(n-1)n(n+1)(n+2)}\\
    &\pushright{
    \text{for }
        n\geq 4
    \text{ and }
        i,j,k,l \text{ pairwise different},}
        \\[0.3em]
    J_{8}
    &=\int_T \z_{ij}\z_{kl}\z_{op}\z_{rs} \; \dd\sigma
    =  
    0,
    \\[0.3em]
    &\pushright{
    \text{if at least one of } i,j,k,l,o,p,r,s
    \text{ occurs an odd number of times,}}\\[0.3em] 
    J_{9}
    &=\int_T \w_{ij}\w_{kl}\w_{op}\w_{rs} \; \dd\sigma
    =  
    0,   
    \\
    &\pushright{
    \text{if at least one of } i,j,k,l,o,p,r,s
    \text{ occurs an odd number of times.}}
\end{align*}
\end{lemma}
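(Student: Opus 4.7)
The plan is to reduce every integral in the lemma to an iterated integral over two standard spheres. I would first exploit the projection $\pi\colon T\to S^{n-1}$, $(\x,\y)\mapsto\x$, whose fiber over $\x$ is the unit sphere $S_\x$ in the hyperplane $\x^\perp\subset\RR^n$. Writing $\mu$ for the normalized rotation-invariant measure on the relevant sphere, the probability measure on $T$ defined by
\begin{equation*}
\int_T f\,d\widetilde\sigma := \int_{S^{n-1}}\int_{S_\x}f(\x,\y)\,d\mu(\y)\,d\mu(\x)
\end{equation*}
is manifestly $\SO(n)$-invariant and normalized, hence coincides with $\sigma$ by the uniqueness of such a measure established just above the lemma. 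For the inner integration, use that $\y\in\x^\perp$ gives $\langle\y,e_j\rangle=\langle\y,P_\x e_j\rangle$ with $P_\x=I-\x\x^T$, and then apply the classical sphere moment formula
\begin{equation*}
\int_{S^{n-2}}\langle\y,v_1\rangle\cdots\langle\y,v_{2m}\rangle\,d\mu(\y)=\frac{1}{(n-1)(n+1)\cdots(n+2m-3)}\sum_{P}\prod_{\{a,b\}\in P}\langle v_a,v_b\rangle
\end{equation*}
(sum over perfect matchings of $\{1,\ldots,2m\}$; odd-order integrals vanish). Substituting $v_a=P_\x e_{j_a}$ and using $\langle P_\x e_{j_a},P_\x e_{j_b}\rangle=\delta_{j_aj_b}-\x_{j_a}\x_{j_b}$ produces a polynomial in $\x$, to be integrated via
\begin{equation*}
\int_{S^{n-1}}\x_1^{m_1}\cdots\x_n^{m_n}\,d\mu(\x)=\frac{\prod_j(m_j-1)!!}{n(n+2)\cdots(n+|m|-2)}
\end{equation*}
when all $m_j$ are even, and $0$ otherwise.

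For the vanishing statements $I_6,I_7,I_8,J_8,J_9$ I would bypass this machinery with a one-line reflection argument: whenever some index $i$ appears to an odd total power, pick any index $j\neq i$ not occurring in the monomial (possible because $n\geq 3$, and $n\geq 4$ in the $J_5,J_7$ patterns as already assumed in the lemma) and apply the element of $\SO(n)$ that negates $e_i$ and $e_j$ while fixing every other basis vector; this preserves $\sigma$ but reverses the sign of the integrand, forcing the integral to vanish.

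For the bidegree-$(2,2)$ integrals $I_1$--$I_5$ I would settle all five simultaneously by observing that $\SO(n)$-invariance forces the $4$-tensor $M_{ijkl}:=\int\x_i\x_j\y_k\y_l\,d\sigma$ to take the form $a\,\delta_{ij}\delta_{kl}+b(\delta_{ik}\delta_{jl}+\delta_{il}\delta_{jk})$. The two coefficients are then pinned down by $\sum_i\x_i^2\equiv 1$ on $T$ (yielding $\sum_iM_{iikl}=\delta_{kl}/n$) and $\sum_i\x_i\y_i\equiv 0$ on $T$ (yielding $\sum_iM_{ikil}=0$), giving $a=(n+1)/(n(n-1)(n+2))$ and $b=-1/(n(n-1)(n+2))$. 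Each of $I_1$--$I_5$ is then immediate, with $I_4$ and $I_5$ requiring only expansion of $\vv_{ij}$ and $\w_{ij}$.

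The main obstacle is the bidegree-$(4,4)$ family $J_1$--$J_7$. The recipe above still applies uniformly, but the bookkeeping grows substantially: for $J_3$--$J_5$ one tracks contributions of $P_\x e_j=e_j-\x_j\x$ across all pairings of up to eight vectors, and for $J_6,J_7$ one first expands $\w_{ij}^4$ and $\w_{ij}^2\w_{kl}^2$ into bidegree-$(4,4)$ monomials before applying the recipe. As a sanity check, for $J_1$ the inner integration gives $\int\y_i^4\,d\mu=3(1-\x_i^2)^2/((n-1)(n+1))$, and integrating $\x_i^4$ times this over $S^{n-1}$ recovers $9/(n(n+2)(n+4)(n+6))$; every remaining entry follows by identical but longer computations that are purely mechanical arithmetic once the two displayed moment formulas are in hand.
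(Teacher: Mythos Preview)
Your approach is correct and genuinely different from the paper's. The paper parametrizes $T$ explicitly by Euler angles via a product of Givens rotations, writes out $x(\PHI),y(\PHI,\PSI)$ in coordinates, and evaluates each integral as a product of one--variable trigonometric integrals (their Lemma on $A_{i,2j},B_{i,2j},C_{i,2j}$); this forces a case split $n\ge 6$, $n=5$, $n=4$, $n=3$ because the last coordinate functions change form. Your fibration $T\to S^{n-1}$ together with the Wick/Isserlis moment formula on spheres is the coordinate--free version of the same disintegration and avoids the case analysis entirely. Your invariant--tensor determination of $M_{ijkl}=a\,\delta_{ij}\delta_{kl}+b(\delta_{ik}\delta_{jl}+\delta_{il}\delta_{jk})$ via the two trace conditions is cleaner than the paper's route, which computes $I_1,I_2$ directly and then extracts $I_3$ from $\int_T(\sum\x_i^2)(\sum\y_i^2)\,\dd\sigma=1$. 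For the $J$'s the paper also mixes direct computation with algebraic identities (e.g.\ it recovers $J_5$ for $n=4,5$ from $\int_T(\sum_i\z_i)^4\,\dd\sigma=0$), whereas your method handles all $n$ uniformly at the cost of longer Wick expansions.

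One small slip to fix: in your reflection argument for $I_6,\ldots,J_9$ you claim one can always choose $j\neq i$ \emph{not occurring} in the monomial. This fails already for $\z_{12}\z_{34}$ when $n=4$ (every index appears once), and negating any two basis vectors leaves the sign unchanged. The clean repair is immediate within your own setup: the iterated--integral measure $\widetilde\sigma$ you wrote down is visibly $O(n)$--invariant (both sphere measures are), so once you have identified $\sigma=\widetilde\sigma$ you may reflect a \emph{single} coordinate $e_i$, which flips the sign whenever $i$ has odd total exponent. The paper's proof in fact uses exactly this single--coordinate sign flip.
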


In the proof of Lemma \ref{integrals}
we will use the following technical lemma.
Recall that $n!!=\displaystyle\prod_{k=0}^{\lceil\frac{n}{2}\rceil -1} (n-2k)$
stands for the double factorial of $n\in \NN$.

\begin{lemma}
\label{integrals-help}
For $i\in \NN$ and $j\in \NN\cup \{0\}$ the following equalities hold:
\begin{align*}
    A_{i,2j}
&   :=\frac
    {\int_0^\pi\sin^{i+2j}(\phi)\;\dd\phi}
    {\int_0^\pi \sin^{i}(\phi)\;\dd\phi}
    =\frac{(i+2j-1)!!\; i!!}{(i-1)!!\; (i+2j)!!},\\[0.3em]
B_{i,2j}
&   :=\frac
    {\int_0^\pi \cos^2(\phi)\sin^{i+2j}(\phi)\;\dd\phi}
    {\int_0^\pi \sin^{i}(\phi)\;\dd\phi}
    =\frac{(i+2j-1)!!\; i!!}{(i-1)!!\;(i+2j+2))!!},\\[0.3em]
C_{i,2j}
&   :=\frac
    {\int_0^\pi \cos^4(\phi)\sin^{i+2j}(\phi)\;\dd\phi}
    {\int_0^\pi \sin^{i}(\phi)\;\dd\phi}
=3\frac{(i+2j-1)!!\; i!!}{(i-1)!!\; (i+2j+4))!!}.
\end{align*}
\end{lemma}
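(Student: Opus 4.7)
The three identities are classical Wallis-type computations, so the plan is essentially a careful bookkeeping exercise with double factorials.

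First I would recall (or derive by the standard reduction formula $\int_0^\pi \sin^n\phi\,\dd\phi = \frac{n-1}{n}\int_0^\pi \sin^{n-2}\phi\,\dd\phi$) the closed form
\[
\int_0^\pi \sin^n(\phi)\,\dd\phi =
\begin{cases}
\pi\cdot\dfrac{(n-1)!!}{n!!}, & n \text{ even},\\[0.3em]
2\cdot\dfrac{(n-1)!!}{n!!}, & n \text{ odd}.
\end{cases}
\]
Since $i$ and $i+2j$ have the same parity, the leading constant ($\pi$ or $2$) cancels in every quotient considered below. Plugging this into the definition of $A_{i,2j}$ gives the claimed formula immediately:
\[
A_{i,2j}=\frac{(i+2j-1)!!/(i+2j)!!}{(i-1)!!/i!!}=\frac{(i+2j-1)!!\;i!!}{(i-1)!!\;(i+2j)!!}.
\]

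For $B_{i,2j}$ and $C_{i,2j}$ I would use $\cos^2\phi=1-\sin^2\phi$ and $\cos^4\phi=1-2\sin^2\phi+\sin^4\phi$ to reduce the numerator integrals to combinations of sine powers, yielding
\[
B_{i,2j}=A_{i,2j}-A_{i,2j+2},\qquad C_{i,2j}=A_{i,2j}-2A_{i,2j+2}+A_{i,2j+4}.
\]
Then the three quotients share the common factor $\frac{i!!}{(i-1)!!}$, and after pulling out $\frac{1}{(i+2j+2)!!}$ (respectively $\frac{1}{(i+2j+4)!!}$) one reduces everything to a polynomial identity in $k:=i+2j$. For $B$ this is just $(k+2)-(k+1)=1$, giving the stated expression. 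For $C$, a one-line expansion gives
\[
(k+2)(k+4)-2(k+1)(k+4)+(k+1)(k+3)=3,
\]
which produces the prefactor $3$ in the formula for $C_{i,2j}$.

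There is no real obstacle here; the only risk is an off-by-one slip in the double factorial shifts when using $(i+2j+1)!!=(i+2j+1)(i+2j-1)!!$ and $(i+2j+3)!!=(i+2j+3)(i+2j+1)(i+2j-1)!!$. I would therefore present the computations in the $k=i+2j$ notation to keep the algebra transparent, and remark that the case distinction between even and odd $i$ does not affect the answer because the $\pi$ vs.\ $2$ factors cancel in all three ratios.
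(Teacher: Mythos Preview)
Your proposal is correct and essentially matches the paper's approach: the paper computes $A_{i,2j}$ via the Beta/Gamma identity $\int_0^\pi\sin^n\phi\,\dd\phi=B\big(\frac{n+1}{2},\frac12\big)$ and then says the proofs for $B_{i,2j}$ and $C_{i,2j}$ are ``similar'', which is exactly the Wallis-type bookkeeping you carry out. Your reduction $B_{i,2j}=A_{i,2j}-A_{i,2j+2}$ and $C_{i,2j}=A_{i,2j}-2A_{i,2j+2}+A_{i,2j+4}$ via $\cos^2\phi=1-\sin^2\phi$ is a clean way to make the ``similar'' explicit, and the polynomial checks in $k=i+2j$ are correct.
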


\begin{proof}
We have
\begin{align*}
    A_{i,2j}
    &=\frac
    {\int_0^\pi\sin^{i+2j}(\phi)\;\dd\phi}
    {\int_0^\pi \sin^{i}(\phi)\;\dd\phi}
    =
    \frac
    {B\big(\frac{i+2j+1}{2},\frac{1}{2}\big)}
    {B\big(\frac{i+1}{2},\frac{1}{2}\big)}
    =
    \frac
    {\Gamma\big(\frac{i+2j+1}{2}\big)\Gamma\big(\frac{i+2}{2}\big)}
    {\Gamma\big(\frac{i+1}{2}\big)\Gamma\big(\frac{i+2j+2}{2}\big)}
    \\
    &=
    \frac
    {
    \big(i+1+2(j-1)\big)
    \big(i+1+2(j-2)\big)
    \cdots
    \big(i+1\big)
    }
    {
    \big(i+2+2(j-1)\big)
    \big(i+2+2(j-2)\big)
    \cdots
    \big(i+2\big)}\\
    &=
    \frac{(i+2j-1)!!\; i!!}{(i-1)!!\; (i+2j)!!}.
\end{align*}
The proofs for $B_{i,2j}$ and $C_{i,2j}$
are similar.
\end{proof}

Now we are ready to prove Lemma \ref{integrals}.

\begin{proof}[Proof of Lemma \ref{integrals}]
We write 
\begin{align*}
    \PHI
    &=(\phi_1,\phi_2,\ldots,\phi_{n-1}),
    \quad
    \phi_1,\ldots,\phi_{n-2}\in [0,\pi],\; \phi_{n-1}\in [0,2\pi]\\
    \PSI
    &=(\psi_1,\psi_2,\ldots,\psi_{n-2}),
    \quad
    \psi_1,\ldots,\psi_{n-3}\in [0,\pi],\;
    \psi_{n-2}\in [0,2\pi].
\end{align*}
Let
$$
R_n^j(\phi)
=
\begin{pmatrix}
    I_{j-1} & 0 & 0 & 0\\
    0 & \cos(\phi) & -\sin(\phi) & 0\\
    0 & \sin(\phi) & \cos(\phi) & 0 \\
    0 & 0 & 0 & I_{n-j-1}
\end{pmatrix},\quad 1\leq j\leq n-1,
$$
be a Givens rotation, where $I_k$ stands for the $k\times k$ identity matrix, and
\begin{align*}
H_{n}^{1}(\PHI)
    &=
    R_{n}^{n-1}(\phi_{n-1})
    R_{n}^{n-2}(\phi_{n-2})
    \cdots
    R_{n}^{1}(\phi_{1}),\\
H_{n}^{2}(\PSI)
    &=
    R_{n}^{n-1}(\psi_{n-2})
    R_{n}^{n-2}(\psi_{n-3})
    \cdots
    R_{n}^{2}(\psi_{1}).
\end{align*}
Then $H_{n}^{1}(\PHI)$ is 
    (see, e.g., the formula for $L_1(\theta)$ in \cite[p.\ 3--4]{Tum65}
    or use induction on $n$)
\begin{tiny}
\begin{equation*}
    \begin{pmatrix}
        \cos(\phi_1) 
            & -\sin(\phi_1) 
                & 0 
                    & \cdots
                        & 0
                            \\
        \sin(\phi_1)\cos(\phi_2) 
            & \cos(\phi_1)\cos(\phi_2)
                & -\sin(\phi_2)
                   & \cdots
                        & 0
                            \\
        \vdots 
            & \vdots 
                & \vdots
                    & \cdots    
                        & \vdots
                            \\
        \Big(\displaystyle\prod_{j=1}^{i-1}\sin(\phi_j)\Big)
        \cos(\phi_i)
            &
                \cos(\phi_1)
                \Big(\displaystyle\prod_{j=2}^{i-1}\sin(\phi_j)\Big)
                \cos(\phi_i)
                &
                    \cos(\phi_2)
                    \Big(\displaystyle\prod_{j=3}^{i-1}\sin(\phi_j)\Big)
                    \cos(\phi_i)
                    &   \cdots  
                        & 0
                            \\
        \vdots 
            & \vdots
                & \vdots
                    & \cdots
                        & \vdots
                            \\
        \Big(\displaystyle\prod_{j=1}^{n-2}\sin(\phi_j)\Big)
        \cos(\phi_{n-1})
            &
                \cos(\phi_1)
                \Big(\displaystyle\prod_{j=2}^{n-2}\sin(\phi_j)\Big)
                \cos(\phi_{n-1})
                &
                    \cos(\phi_2)
                    \Big(\displaystyle\prod_{j=3}^{n-2}\sin(\phi_j)\Big)
                    \cos(\phi_{n-1})
                    & \cdots
                        & -\sin(\phi_{n-1})
                            \\
        \Big(\displaystyle\prod_{j=1}^{n-2}\sin(\phi_j)\Big)
        \sin(\phi_{n-1})
            &
                \cos(\phi_1)
                \Big(\displaystyle\prod_{j=2}^{n-2}\sin(\phi_j)\Big)
                \sin(\phi_{n-1})
                &
                    \cos(\phi_2)
                    \Big(\displaystyle\prod_{j=3}^{n-2}\sin(\phi_j)\Big)
                    \sin(\phi_{n-1})
                    & \cdots    
                        & \cos(\phi_{n-1})
    \end{pmatrix}
\end{equation*}
\end{tiny}
and 
$$
    H_n^2(\PSI)
    =
    \begin{pmatrix}
        1 & 0\\
        0 & H_{n-1}^1(\PSI)
    \end{pmatrix}.
$$
The set $T=V_{2,n}(\RR)$ (see Remark \ref{Stiefel-manifold}) 
can be parametrized by 
(see, e.g., 
    \cite[p.\ 48--49]{Chi03} 
or 
    \cite[\S2]{Tum65}) 
\begin{align*}
(\PHI,\PSI)\mapsto 
(x(\PHI),y(\PHI,\PSI))
&=
\begin{pmatrix}
    x_1(\PHI) & y_1(\PHI,\PSI)\\
    x_2(\PHI) & y_2(\PHI,\PSI)\\
    \vdots & \vdots \\
    x_n(\PHI) & y_n(\PHI,\PSI)
\end{pmatrix}\\
&=\text{the first two columns of }
H_n^1(\PHI)H_{n}^2(\PSI),
\end{align*}
where
$(\PHI,\PSI)\in 
\big([0,\pi]^{n-2}\times [0,2\pi]\big)
\times
\big([0,\pi]^{n-3}\times [0,2\pi]\big).
$
We define 
\begin{align*}
    \underline{\int_n}
    &=
    \underbrace{\int_{0}^{\pi}\cdots\int_{0}^{\pi}}_{n-2}\int_{0}^{2\pi}
    \underbrace{\int_{0}^{\pi}\cdots\int_{0}^{\pi}}_{n-3}\int_{0}^{2\pi},\\
    \dd\PHI
    &=\dd\phi_1\dd\phi_2\ldots\dd\phi_{n-1},\quad
    \dd\PSI
    =\dd\psi_1\dd\psi_2\ldots\dd\psi_{n-2}.
\end{align*}
We have
    $$
        \int_T g(x,y)\; \dd\sigma= 
        \underline{\int_n}
        g(x(\PHI),y(\PHI,\PSI)) V_n(\PHI,\PSI)\;      
        \dd\PHI\dd\PSI,
    $$
where by \cite[Theorem 2.1]{Chi90} (taking $V=x(\PHI)$, $G(V)=H_n^1(\PHI) \text{ without the first column}$ and
$Z= \text{the first column of }H_{n-1}^1(\PSI)$) and \cite{Blu60},
    $$
    V_n(\PHI,\PSI)=
    \frac{1}{S_n}
    \prod_{i=1}^{n-2}
    \sin(\phi_i)^{n-1-i}
    \cdot 
    \frac{1}{S_{n-1}}
    \prod_{i=1}^{n-3}
    \sin(\psi_i)^{n-2-i},
    $$
with
\begin{align*}
        S_n
        &=
\underbrace{\int_{0}^{\pi}\cdots\int_{0}^{\pi}}_{n-2}
\int_{0}^{2\pi}
    \prod_{i=1}^{n-2}
    \sin(\phi_i)^{n-1-i}
        \;\dd\PHI,\\
            S_{n-1}
            &=
\underbrace{\int_{0}^{\pi}\cdots\int_{0}^{\pi}}_{n-3}
\int_{0}^{2\pi}
    \prod_{i=1}^{n-3}
    \sin(\psi_i)^{n-2-i}
        \;\dd\PSI.    
    \end{align*}
By the invariance of the integral with respect to the 
change of indices 
we can assume without loss of generality 
that $i,j,k,l\in \{1,2,3,4\}$ in all equalities of Lemma
\ref{integrals}.
Due to the difference in parameterizations of 
some of the coordinates 
    $x_i(\PHI)$, $y_i(\PHI,\PSI)$, $i=1,2,3,4$,
we separate cases
$n\geq 6$, $n=5$, $n=4$ and $n=3$
in the rest of the proof.
\\

\noindent \textbf{Case 1:} $n\geq 6$.
The coordinates 
    $x_i(\PHI)$, $y_i(\PHI,\PSI)$, $i=1,2,3,4$,
are the following:
\begin{align*}
    x_1(\PHI)
    &=\cos(\phi_1),\\
    x_2(\PHI)
    &=\sin(\phi_1)\cos(\phi_2),\\
    x_3(\PHI)
    &=\sin(\phi_1)\sin(\phi_2)\cos(\phi_3),\\
    x_4(\PHI)
    &=\sin(\phi_1)\sin(\phi_2)\sin(\phi_3)\cos(\phi_4),\\
y_1(\PHI,\PSI)
    &=
    -\sin(\phi_1)\cos(\psi_1),\\
y_2(\PHI,\PSI)
    &=
    \cos(\phi_1)\cos(\phi_2)\cos(\psi_1)
        -\sin(\phi_2)\sin(\psi_1)\cos(\psi_2),\\
y_3(\PHI,\PSI)
    &=
    \cos(\phi_1)\sin(\phi_2)\cos(\phi_3)\cos(\psi_1)
    +
    \cos(\phi_2)\cos(\phi_3)\sin(\psi_1)\cos(\psi_2)\\
    &\hspace{1cm}
    -
    \sin(\phi_3)\sin(\psi_1)\sin(\psi_2)\cos(\psi_3),\\
y_4(\PHI,\PSI)
    &=
    \cos(\phi_1)\sin(\phi_2)\sin(\phi_3)\cos(\phi_4)\cos(\psi_1)
    +
    \cos(\phi_2)\sin(\phi_3)\cos(\phi_4)\sin(\psi_1)\cos(\psi_2)\\
    &
    +
    \cos(\phi_3)\cos(\phi_4)\sin(\psi_1)\sin(\psi_2)\cos(\psi_3)
    -
    \sin(\phi_4)\sin(\psi_1)\sin(\psi_2)\sin(\psi_3)\cos(\psi_4).
\end{align*}
In the computations below we will need the following identities in the notation of Lemma \ref{integrals-help}:
\begin{align*}
A_{n-4,2}
&
=\frac{(n-3)!!\; (n-4)!!}{(n-5)!!\; (n-2)!!}
=\frac{n-3}{n-2},\\
A_{n-4,4}
&
=\frac{(n-1)!!\; (n-4)!!}{(n-5)!!\; n!!}
=\frac{(n-3)(n-1)}{(n-2)n},\\
A_{n-2,6}
&
=\frac{(n+3)!!\; (n-2)!!}{(n-3)!!\; (n+4)!!}
=\frac{(n-1)(n+1)(n+3)}{n(n+2)(n+4)},\\
A_{n-2,8}
&
=\frac{(n+5)!!\; (n-2)!!}{(n-3)!!\; (n+6)!!}
=\frac{(n-1)(n+1)(n+3)(n+5)}{n(n+2)(n+4)(n+6)},\\
B_{n-5,0}
&
=\frac{(n-6)!!\; (n-5)!!}{(n-6)!!\; (n-3)!!}
=\frac{1}{n-3},\\
B_{n-4,0}
&
=\frac{(n-5)!!\; (n-4)!!}{(n-5)!!\; (n-2)!!}
=\frac{1}{n-2},\\
B_{n-4,2}
&
=\frac{(n-3)!!\; (n-4)!!}{(n-5)!!\; n!!}
=\frac{n-3}{(n-2)n},\\
B_{n-3,0}
&
=\frac{(n-4)!!\; (n-3)!!}{(n-4)!!\; (n-1)!!}
=\frac{1}{n-1},\\
B_{n-3,2}
&
=\frac{(n-2)!!\; (n-3)!!}{(n-4)!!\; (n+1)!!}
=\frac{n-2}{(n-1)(n+1)},\\
B_{n-3,4}
&
=\frac{n!!\; (n-3)!!}{(n-4)!!\; (n+3)!!}
=\frac{(n-2)n}{(n-1)(n+1)(n+3)},\\
B_{n-2,2}
&
=\frac{(n-1)!!\; (n-2)!!}{(n-3)!!\; (n+2)!!}
=\frac{n-1}{n(n+2)},\\
B_{n-2,4}
&
=\frac{(n+1)!!\; (n-2)!!}{(n-3)!!\; (n+4)!!}
=\frac{(n-1)(n+1)}{n(n+2)(n+4)},\\
B_{n-2,6}
&
=\frac{(n+3)!!\; (n-2)!!}{(n-3)!!\; (n+6)!!}
=\frac{(n-1)(n+1)(n+3)}{n(n+2)(n+4)(n+6)},\\
C_{n-3,0}
&
=3\frac{(n-4)!!\; (n-3)!!}{(n-4)!!\; (n+1)!!}
=\frac{3}{(n-1)(n+1)},\\
C_{n-3,2}
&
=3\frac{(n-2)!!\; (n-3)!!}{(n-4)!!\; (n+3)!!}
=\frac{3(n-2)}{(n-1)(n+1)(n+3)},\\
C_{n-2,4}
&
=3\frac{(n+1)!!\; (n-2)!!}{(n-3)!!\; (n+6)!!}
=\frac{3(n-1)(n+1)}{n(n+2)(n+4)(n+6)}.
\end{align*}
Now we are ready to prove the identities of 
Lemma \ref{integrals}. In the computations below we include only terms with nonzero integrals, i.e., terms where in none of the factors
$\cos(\phi_i)^k$ or $\cos(\psi_i)^k$ 
the exponent $k$ is odd.

\begin{align*}
    I_1
    &=\int_T \z_1^2\; \dd\sigma
    =
    \underline{\int_n} 
    \cos(\phi_1)^2\sin(\phi_1)^{2}\cos(\psi_1)^2 
    V_n(\PHI,\PSI)\;      \dd\PHI\dd\PSI\\
    &
    =
    B_{n-2,2}B_{n-3,0}
    =
    \frac{n-1}{n(n+2)}\frac{1}{n-1}
    =
    \frac{1}{n(n+2)},\\
    I_2
    &=
    \int_T \z_1\z_2 \; \dd\sigma
    =-
    \underline{\int_n} \cos(\phi_1)^2\sin(\phi_1)^2\cos(\phi_2)^2\cos(\psi_1)^2 V_n(\PHI,\PSI)\;      \dd\PHI\dd\PSI    
    \\
    &
    =
    -B_{n-2,2}\big(B_{n-3,0}\big)^2
    =
    -
    \frac{n-1}{n(n+2)}\frac{1}{(n-1)^2}
    =
    -\frac{1}{(n-1)n(n+2)},\\
    J_1
    &=\int_T \z_1^4\; \dd\sigma
    =
    \underline{\int_n} 
    \cos(\phi_1)^4\sin(\phi_1)^4\cos(\psi_1)^4 
    V_n(\PHI,\PSI)\;      \dd\PHI\dd\PSI   
    \\
    &
    =
    C_{n-2,4}C_{n-3,0}
    =
    \frac{3(n-1)(n+1)}{n(n+2)(n+4)(n+6)}
    \frac{3}{(n-1)(n+1)}
    =
        \frac{9}{n(n+2)(n+4)(n+6)},\\
J_2&= \int_T \z_1^3\z_2 \; \dd\sigma
    =-
    \underline{\int_n} 
    \cos(\phi_1)^4\sin(\phi_1)^4
    \cos(\phi_2)^2
    \cos(\psi_1)^4
    V_n(\PHI,\PSI)      \; \dd\PHI\dd\PSI   \\
    &=
    -C_{n-2,4}B_{n-3,0}C_{n-3,0}
    =
    -
    \frac{3(n-1)(n+1)}{n(n+2)(n+4)(n+6)}
    \frac{1}{n-1}
    \frac{3}{(n-1)(n+1)}\\
    &=
    -\frac{9}{(n-1)n(n+2)(n+4)(n+6)},\\
J_3&=\int_T \z_1^2\z_2^2 \; \dd\sigma
    =
    \underline{\int_n} 
    \cos(\phi_1)^4\sin(\phi_1)^4
    \cos(\phi_2)^4
    \cos(\psi_1)^4
    V_n(\PHI,\PSI)     \; \dd\PHI\dd\PSI\\
&+ \underline{\int_n}
    \cos(\phi_1)^2\sin(\phi_1)^4
    \cos(\phi_2)^2\sin(\phi_2)^2
    \cos(\psi_1)^2\sin(\psi_1)^2
    \cos(\psi_2)^2
    V_n(\PHI,\PSI)     \; \dd\PHI\dd\PSI\\
    &
=
C_{n-2,4}\big(C_{n-3,0}\big)^2
+
B_{n-2,4}\big(B_{n-3,2}\big)^2B_{n-4,0}\\
&=
\frac{3(n-1)(n+1)}{n(n+2)(n+4)(n+6)}\frac{9}{(n-1)^2(n+1)^2}
+
\frac{(n-1)(n+1)}{n(n+2)(n+4)}
\frac{(n-2)^2}{(n-1)^2(n+1)^2}
\frac{1}{n-2}\\
        &
        =
        \frac{n^2+4n+15}{(n-1)n(n+1)(n+2)(n+4)(n+6)},\\
J_4&=\int_T \z_1^2\z_2\z_3\; \dd\sigma
=   
    \underline{\int_n}
    \cos(\phi_1)^4\sin(\phi_1)^4
    \cos(\phi_2)^2\sin(\phi_2)^2
    \cos(\phi_3)^2
    \cos(\psi_1)^4\cdot
    \\
    &\hspace{1cm}
    \cdot
    V_n(\PHI,\PSI)    \;  \dd\PHI\dd\PSI
    -
    \underline{\int_n}
    \cos(\phi_1)^2\sin(\phi_1)^4
    \cos(\phi_2)^2\sin(\phi_2)^2
    \cos(\phi_3)^2
    \cos(\psi_1)^2\sin(\psi_1)^2\cdot\\
    &\hspace{1cm}
    \cdot
    \cos(\psi_2)^2
    V_n(\PHI,\PSI)    \;  \dd\PHI\dd\PSI
    \\
    &=
    C_{n-2,4}B_{n-3,2}B_{n-4,0}C_{n-3,0}
    -
    B_{n-2,4}\big(B_{n-3,2}\big)^2\big(B_{n-4,0}\big)^2\\
    &=
    \frac{3(n-1)(n+1)}{n(n+2)(n+4)(n+6)}
    \frac{n-2}{(n-1)(n+1)}
    \frac{1}{n-2}
    \frac{3}{(n-1)(n+1)}\\
    &\hspace{2cm}
    -\frac{(n-1)(n+1)}{n(n+2)(n+4)}
    \frac{(n-2)^2}{(n-1)^2(n+1)^2}
    \frac{1}{(n-2)^2}    
    \\
        &=
        -\frac{n-3}{(n-1)n(n+1)(n+2)(n+4)(n+6)},\\
J_5&=\int_T \z_1\z_2\z_3\z_4\; \dd\sigma=
    -
    \underline{\int_n}
    \cos(\phi_1)^4\sin(\phi_1)^4
    \cos(\phi_2)^2\sin(\phi_2)^4
    \cos(\phi_3)^2\sin(\phi_3)^2\cdot\\
    &\hspace{1cm}\cdot
    \cos(\phi_4)^2
    \cos(\psi_1)^4
    V_n(\PHI,\PSI)    \;  \dd\PHI\dd\PSI
    -   
    \underline{\int_n}
    \cos(\phi_1)^2\sin(\phi_1)^4
    \cos(\phi_2)^4\sin(\phi_2)^2\cdot\\
    &\hspace{1cm}\cdot
    \cos(\phi_3)^2\sin(\phi_3)^2
    \cos(\phi_4)^2
    \cos(\psi_1)^2\sin(\psi_1)^2
    \cos(\psi_2)^2
    V_n(\PHI,\PSI)   \;   \dd\PHI\dd\PSI\\
&+
2\underline{\int_n}
    \cos(\phi_1)^2\sin(\phi_1)^4
    \cos(\phi_2)^2\sin(\phi_2)^4
    \cos(\phi_3)^2\sin(\phi_3)^2
    \cos(\phi_4)^2
    \cos(\psi_1)^2\sin(\psi_1)^2\cdot\\
    &\hspace{1cm}\cdot\cos(\psi_2)^2
    V_n(\PHI,\PSI)  \;    \dd\PHI\dd\PSI
+\underline{\int_n}
    \cos(\phi_1)^2\sin(\phi_1)^4
    \cos(\phi_2)^2\sin(\phi_2)^2\cdot\\
    &\hspace{1cm}\cdot
    \cos(\phi_3)^2\sin(\phi_3)^2
    \cos(\phi_4)^2
    \cos(\psi_1)^2\sin(\psi_1)^2    
    \sin(\psi_2)^2
    \cos(\psi_3)^2
    V_n(\PHI,\PSI)  \;    \dd\PHI\dd\PSI\\
    &=
    -C_{n-2,4}B_{n-3,4}B_{n-4,2}B_{n-5,0}C_{n-3,0}
    -B_{n-2,4}C_{n-3,2}B_{n-4,2}B_{n-5,0}B_{n-3,2}B_{n-4,0}\\
    &
    +2B_{n-2,4}B_{n-3,4}B_{n-4,2}B_{n-5,0}B_{n-3,2}B_{n-4,0}
    +B_{n-2,4}\big(B_{n-3,2}\big)^2B_{n-4,2}\big(B_{n-5,0}\big)^2A_{n-4,2}
    \\
    &
    =
    -
    \frac{3(n-1)(n+1)}{n(n+2)(n+4)(n+6)}
    \frac{(n-2)n}{(n-1)(n+1)(n+3)}
    \frac{n-3}{(n-2)n}
    \frac{1}{n-3}
    \frac{3}{(n-1)(n+1)}
    \\
    &\hspace{1cm}-
    \frac{(n-1)(n+1)}{n(n+2)(n+4)}
    \frac{3(n-2)}{(n-1)(n+1)(n+3)}
    \frac{n-3}{(n-2)n}
    \frac{1}{n-3}
    \frac{n-2}{(n-1)(n+1)}
    \frac{1}{n-2}
    \\
    &\hspace{1cm}+
    2
    \frac{(n-1)(n+1)}{n(n+2)(n+4)}
    \frac{(n-2)n}{(n-1)(n+1)(n+3)}
    \frac{n-3}{(n-2)n}
    \frac{1}{n-3}
    \frac{n-2}{(n-1)(n+1)}
    \frac{1}{n-2}
    \\
    &\hspace{1cm}+
    \frac{(n-1)(n+1)}{n(n+2)(n+4)}
    \frac{(n-2)^2}{(n-1)^2(n+1)^2}
    \frac{n-3}{(n-2)n}
    \frac{1}{(n-3)^2}
    \frac{n-3}{n-2}
    \\
        &=
        \frac{3}{(n-1)n(n+1)(n+2)(n+4)(n+6)}.
\end{align*}

The fact 
    $I_3=\frac{n+1}{n-1}I_1$ is a consequence of the
following computation:
\begin{align*}
    1
    &=
    \int_T 
        \Big(\sum_{i=1}^n \x_i^2\Big)
        \Big(\sum_{i=1}^n \y_i^2\Big)
       \; \dd\sigma
    =
    \int_T \Big(\sum_{i=1}^n \z_i^2\Big) 
        \;\dd\sigma
    +
    \int_T \Big(\sum_{i\neq j} \z_{ij}^2\Big)             \; \dd\sigma
    =
    nI_1+n(n-1)I_3.
\end{align*}
Hence, 
    $$1-nI_1=\frac{n+1}{n+2}=n(n-1)I_3,$$
which implies 
    $I_3=\frac{(n+1)}{(n-1)n(n+2)}=\frac{n+1}{n-1}I_1$.

Further,
\begin{align*}
    I_4
    &=
    \int_T (\z_{ij}+\z_{ji})^2 \; \dd\sigma
    =  
    \int_T (\z_{ij}^2+2\z_{ij}\z_{ji}+\z_{ji}^2) \; \dd\sigma
    =  
    \int_T (\z_{ij}^2+2\z_{i}\z_{j}+\z_{ji}^2) \; \dd\sigma\\
    &=
    2(I_3+I_2)
    =2\Big(\frac{n+1}{n-1}-\frac{1}{n-1}\Big)I_1
    =\frac{2n}{n-1}I_1
    =\frac{2}{(n-1)(n+2)},\\
    I_5
    &=\int_T (\z_{ij}-\z_{ji})^2 \; \dd\sigma
    =  
    \int_T (\z_{ij}^2-2\z_{ij}\z_{ji}+\z_{ji}^2) \; \dd\sigma
    =  
    \int_T (\z_{ij}^2-2\z_{i}\z_{j}+\z_{ji}^2) \; \dd\sigma\\
    &=
    2(I_3-I_2)
    =2\Big(\frac{n+1}{n-1}+\frac{1}{n-1}\Big)I_1
    =\frac{2(n+2)}{n-1}I_1
    =\frac{2}{(n-1)n}.
\end{align*}

Next we prove that 
$\int_T \z_{ij}\z_{kl} \; \dd\sigma=0$ 
if at least one of $i,j,k,l$
occurs an odd number of times.
Write $g(\x,\y):=\x_i\y_j\x_k\y_l$ and let 
$i_1$ be the index among $i,j,k,l$ which occurs an odd number of times.
Since
\begin{align*}
\begin{split}
&\quad g(
x_1,\ldots,x_{i_1-1},-x_{i_1},x_{i_1+1},\ldots,x_n,
y_1,\ldots,y_{i_1-1},-y_{i_1},y_{i_1+1},\ldots,y_n)\\
&=
-
g(
x_1,\ldots,x_{i_1-1},x_{i_1},x_{i_1+1},\ldots,x_n,
y_1,\ldots,y_{i_1-1},y_{i_1},y_{i_1+1},\ldots,y_n)
\end{split}
\end{align*}
for every $(x,y)\in T$, it follows that 
$I_6=\int_T \z_{ij}\z_{kl} \; \dd\sigma=0$.
Consequently, $I_7=I_8=0$, since $I_7$ and $I_8$ are both weighted sums of the integrals of the form $I_6$.

Now we prove $J_6=\frac{24}{(n-1)n(n+1)(n+2)}$.
We have
\begin{align*}
\int_T \w_{12}^4\;\dd\sigma
&=\int_T (\x_1\y_2-\y_1\x_2)^4\;\dd\sigma\\
&=\int_T (\x_1^4\y_2^4-4\x_1^3\x_2\y_1\y_2^3+6\x_1^2\x_2^2\y_1^2\y_2^2-4\x_1\x_2^3\y_1^3\y_2+\x_2^4\y_1^4)\;\dd\sigma\\
&=
2\int_T \x_2^4\y_1^4\;\dd\sigma
-
8\int_T \x_1\x_2^3\y_1^3\y_2\;\dd\sigma
+
6\int_T \z_1^2\z_2^2\;\dd\sigma,
\end{align*}
where we used
that 
$
\int_T \x_1^4\y_2^4\;\dd\sigma
=\int_T \x_2^4\y_1^4\;\dd\sigma
$
and
$
\int_T \x_1^3\x_2\y_1\y_2^3\;\dd\sigma
=
\int_T \x_1\x_2^3\y_1^3\y_2\;\dd\sigma
$
due to 
symmetry of the integral value in the indices of the variables $\x, \y$.
We compute:
\begin{align*}
J_{6}^{(1)}
&=\int_T \x_2^4\y_1^4\;\dd\sigma
=
\underline{\int_n}
\sin(\phi_1)^8\cos(\phi_2)^4\cos(\psi_1)^4 
V_n(\PHI,\PSI)
\;\dd\PHI\dd\PSI\\
&=
A_{n-2,8}\big(C_{n-3,0}\big)^2
=
\frac{(n-1)(n+1)(n+3)(n+5)}{n(n+2)(n+4)(n+6)}
\frac{9}{(n-1)^2(n+1)^2}
\\
    &=
        \frac{9(n+3)(n+5)}{(n-1)n(n+1)(n+2)(n+4)(n+6)},\\
J_6^{(2)}
&=\int_T \x_1\x_2^3\y_1^3\y_2\;\dd\sigma
=
-
\underline{\int_n}
\cos(\phi_1)^2\sin(\phi_1)^6
\cos(\phi_2)^4
\cos(\psi_1)^4
V_n(\PHI,\PSI)
\;\dd\PHI\dd\PSI\\
&=
-
B_{n-2,6}\big(C_{n-3,0}\big)^2
=
-
\frac{(n-1)(n+1)(n+3)}{n(n+2)(n+4)(n+6)}
\frac{9}{(n-1)^2(n+1)^2}
\\
    &=
        -\frac{9(n+3)}{(n-1)n(n+1)(n+2)(n+4)(n+6)},
\end{align*}
where in the second integral we included only the term with a nonzero integral.
Hence,
$$
J_6
=2J_6^{(1)}-8J_{6}^{(2)}+6J_3
=\frac{24}{(n-1)n(n+1)(n+2)}.
$$

Next we prove $J_{7}=\frac{4}{(n-1)n(n+1)(n+2)}$.
We have
\begin{align*}
&\int_T \w_{12}^2\w_{34}^2\;\dd\sigma
=\int_T 
(\x_1\y_2-\y_1\x_2)^2
(\x_3\y_4-\y_3\x_4)^2\;\dd\sigma\\
&=\int_T 
\big(
\x_1^2\x_3^2\y_2^2\y_4^2
-2\x_1\x_2\x_3^2\y_1\y_2\y_4^2
+\x_2^2\x_3^2\y_1^2\y_4^2
-2\x_1^2\x_3\x_4\y_2^2\y_3\y_4
+4\x_1\x_2\x_3\x_4\y_1\y_2\y_3\y_4\\
&\hspace{1cm}
-2\x_2^2\x_3\x_4\y_1^2\y_3\y_4
+\x_1^2\x_4^2\y_2^2\y_3^2
-2\x_1\x_2\x_4^2\y_1\y_2\y_3^2
+\x_2^2\x_4^2\y_1^2\y_3^2
\big)\;\dd\sigma\\
&=
4\int_T \x_2^2\x_4^2\y_1^2\y_3^2\;\dd\sigma
-
8\int_T \x_2^2\x_3\x_4\y_1^2\y_3\y_4\;\dd\sigma
+
4\int_T \z_1\z_2\z_3\z_4\;\dd\sigma
,
\end{align*}
where we used
that 
\begin{align*}
&
\int_T \x_1^2\x_3^2\y_2^2\y_4^2\;\dd\sigma
=\int_T \x_2^2\x_3^2\y_1^2\y_4^2\;\dd\sigma
=
\int_T \x_1^2\x_4^2\y_2^2\y_3^2\;\dd\sigma
=\int_T \x_2^2\x_4^2\y_1^2\y_3^2\;\dd\sigma,\\
&
\int_T \x_1\x_2\x_3^2\y_1\y_2\y_4^2\;\dd\sigma
=
\int_T \x_1^2\x_3\x_4\y_2^2\y_3\y_4\;\dd\sigma
=
\int_T \x_2^2\x_3\x_4\y_1^2\y_3\y_4\;\dd\sigma
=
\int_T \x_1\x_2\x_4^2\y_1\y_2\y_3^2\;\dd\sigma
\end{align*}
due to the 
symmetry of the integral value in the indices of the variables $\x,\y$.
We compute:
\begin{align*}
J_{7}^{(1)}
&=\int_T \x_2^2\x_4^2\y_1^2\y_3^2\;\dd\sigma
=
\underline{\int_n}
\cos(\phi_1)^2\sin(\phi_1)^6
\cos(\phi_2)^2\sin(\phi_2)^4
\cos(\phi_3)^2\sin(\phi_3)^2\cdot\\
&\cdot
\cos(\phi_4)^2
\cos(\psi_1)^4 
V_n(\PHI,\PSI)\;\dd\PHI\dd\PSI
+\underline{\int_n}
\sin(\phi_1)^6
\cos(\phi_2)^4\sin(\phi_2)^2
\cos(\phi_3)^2\sin(\phi_3)^2\cdot\\
&\cdot
\cos(\phi_4)^2
\cos(\psi_1)^2\sin(\psi_1)^2
\cos(\psi_2)^2
V_n(\PHI,\PSI)\;\dd\PHI\dd\PSI
+\underline{\int_n}
\sin(\phi_1)^6
\cos(\phi_2)^2\sin(\phi_2)^2\cdot\\
&\cdot
\sin(\phi_3)^4
\cos(\phi_4)^2
\cos(\psi_1)^2\sin(\psi_1)^2
\sin(\psi_2)^2
\cos(\psi_3)^2
V_n(\PHI,\PSI)\;\dd\PHI\dd\PSI\\
&=
B_{n-2,6}B_{n-3,4}B_{n-4,2}B_{n-5,0}C_{n-3,0}
+
A_{n-2,6}C_{n-3,2}B_{n-4,2}B_{n-5,0}B_{n-3,2}B_{n-4,0}\\
&\hspace{1cm}
+
A_{n-2,6}\big(B_{n-3,2}\big)^2A_{n-4,4}\big(B_{n-5,0}\big)^2A_{n-4,2}
\\
&=
\frac{(n-1)(n+1)(n+3)}{n(n+2)(n+4)(n+6)}
\frac{(n-2)n}{(n-1)(n+1)(n+3)}
\frac{n-3}{(n-2)n}
\frac{1}{n-3}
\frac{3}{(n-1)(n+1)}
\\
&+
\frac{(n-1)(n+1)(n+3)}{n(n+2)(n+4)}
\frac{3(n-2)}{(n-1)(n+1)(n+3)}
\frac{n-3}{(n-2)n}
\frac{1}{n-3}
\frac{n-2}{(n-1)(n+1)}
\frac{1}{n-2}
\\
&+
\frac{(n-1)(n+1)(n+3)}{n(n+2)(n+4)}
\frac{(n-2)^2}{(n-1)^2(n+1)^2}
\frac{(n-3)(n-1)}{(n-2)n}
\frac{1}{(n-3)^2}
\frac{n-3}{n-2}
\\
    &
=
\frac{(n+3)(n+5)}{(n-1)n(n+1)(n+2)(n+4)(n+6)},
\\
J_{7}^{(2)}
&=\int_T \x_2^2\x_3\x_4\y_1^2\y_3\y_4\;\dd\sigma
=
\underline{\int_n}
\cos(\phi_1)^2\sin(\phi_1)^6
\cos(\phi_2)^2\sin(\phi_2)^4\cdot\\
&\cdot
\cos(\phi_3)^2\sin(\phi_3)^2
\cos(\phi_4)^2
\cos(\psi_1)^4 
V_n(\PHI,\PSI)\;\dd\PHI\dd\PSI
+\underline{\int_n}
\sin(\phi_1)^6
\cos(\phi_2)^4\sin(\phi_2)^2\cdot\\
&\cdot
\cos(\phi_3)^2\sin(\phi_3)^2
\cos(\phi_4)^2
\cos(\psi_1)^2\sin(\psi_1)^2
\cos(\psi_2)^2
V_n(\PHI,\PSI)\;\dd\PHI\dd\PSI\\
&-\underline{\int_n}
\sin(\phi_1)^6
\cos(\phi_2)^2\sin(\phi_2)^2
\cos(\phi_3)^2\sin(\phi_3)^2
\cos(\phi_4)^2
\cos(\psi_1)^2\sin(\psi_1)^2\\
&\hspace{2cm}\cdot
\sin(\psi_2)^2
\cos(\psi_3)^2
V_n(\PHI,\PSI)\;\dd\PHI\dd\PSI\\
    &=
        \frac{3}{(n-1)n(n+1)(n+2)(n+4)(n+6)}
        +
        \frac{3}{(n-1)n^2(n+1)(n+2)(n+4)}
        \\
&\hspace{1cm}
-
A_{n-2,6}\big(B_{n-3,2}\big)^2B_{n-4,2}B_{n-5,0}A_{n-4,2}B_{n-5,0}\\
    &=
        \frac{3}{(n-1)n(n+1)(n+2)(n+4)(n+6)}
        +
        \frac{3}{(n-1)n^2(n+1)(n+2)(n+4)}
        \\
&
\hspace{1cm}-
\frac{(n-1)(n+1)(n+3)}{n(n+2)(n+4)}
\frac{(n-2)^2}{(n-1)^2(n+1)^2}
\frac{n-3}{(n-2)n}
\frac{1}{n-3}
\frac{n-3}{n-2}
\frac{1}{n-3}
\\
&=
-\frac{n+3}{(n-1)n(n+1)(n+2)(n+4)(n+6)},
\end{align*}
where we included only the terms with nonzero integrals in the computations.
Hence,
$$
J_{7}
=4J_{7}^{(1)}-8J_{7}^{(2)}+4J_5
=\frac{4}{(n-1)n(n+1)(n+2)}.
$$
The argument for $J_{8}=J_{9}=0$
is the same as for $I_6=I_7=I_8=0$ above.\\

\noindent \textbf{Case 2:} $n=5$.
Note that the parameterizations of 
    $x_i(\PHI)$, $i=1,2,3,4$, 
and 
    $y_i(\PHI,\PSI)$, $i=1,2,3$,
are the same as in the case $n\geq 6$,
while
\begin{align*}
y_4(\PHI,\PSI)
    &=
    \cos(\phi_1)\sin(\phi_2)\sin(\phi_3)\cos(\phi_4)\cos(\psi_1)
    +
    \cos(\phi_2)\sin(\phi_3)\cos(\phi_4)\sin(\psi_1)\cos(\psi_2)\\
    &\hspace{1cm}
    +
    \cos(\phi_3)\cos(\phi_4)\sin(\psi_1)\sin(\psi_2)\cos(\psi_3)
    -
    \sin(\phi_4)\sin(\psi_1)\sin(\psi_2)\sin(\psi_3).
\end{align*}
So the computations of the integrals of monomials from Lemma \ref{integrals} containing at most 3 different indices remain the same as in the case $n\geq 6$.
The remaining formulas containing monomials with possibly more than three different indices are 
$I_6,I_7,I_8,J_5,J_7,J_8,J_9$.
The arguments for 
$I_6=I_7=I_8=J_{8}=J_{9}=0$
are the same as in the case $n\geq 6$.
The argument for $J_5$ following the same formula as in the case $n\geq 6$ also when applied to $n=5$  is the following computation:
\begin{align*}
0
&=
\int_T (\z_1+\z_2+\z_3+\z_4+\z_5)^4\; \dd\sigma=
\sum_{i=1}^5 \int_T \z_i^4\;\dd\sigma 
+4\sum_{i\neq j} \int_T \z_i^3\z_j\;\dd\sigma
+6\sum_{i<j} \int_T \z_i^2\z_j^2\;\dd\sigma\\
&+
12\sum_{
    \substack{   
    i,j,k \text{ pairw.}\\ \text{diff.}, j<k 
    }
} \int_T \z_i^2\z_j\z_k\;\dd\sigma
+
24\sum_{
    \substack{   
    i<j<k<l
    }
} \int_T \z_i\z_j\z_k\z_l\;\dd\sigma\\
&=
5J_1
+4\cdot 2\binom{5}{2}J_2
+6\binom{5}{2}J_3
+12\cdot 5\binom{4}{2}J_4
+24\binom{5}{4}J_5.
\end{align*}
Using $J_1,J_2,J_3,J_4$ as stated in Lemma \ref{integrals} for $n=5$, we get 
$J_5=\frac{1}{27720}$, which is also in accordance with the formula in Lemma \ref{integrals} for $n=5$.
 
It remains to do direct computations for the value of $J_{7}$.
In the notation of case $n\geq 6$ we need to compute $J_{7}^{(1)}$ and $J_{7}^{(2)}$:
\begin{align*}
J_{7}^{(1)}
&=\int_T \x_2^2\x_4^2\y_1^2\y_3^2\;\dd\sigma
=
\underline{\int_5}
\cos(\phi_1)^2\sin(\phi_1)^6
\cos(\phi_2)^2\sin(\phi_2)^4
\cos(\phi_3)^2\sin(\phi_3)^2\cdot\\
&\cdot
\cos(\phi_4)^2
\cos(\psi_1)^4
V_5(\PHI,\PSI)
\;\dd\PHI\dd\PSI
+\underline{\int_5}
\sin(\phi_1)^6
\cos(\phi_2)^4\sin(\phi_2)^2
\cos(\phi_3)^2\sin(\phi_3)^2\cdot\\
&\cdot
\cos(\phi_4)^2
\cos(\psi_1)^2\sin(\psi_1)^2
\cos(\psi_2)^2
V_{5}(\PHI,\PSI)
\;\dd\PHI\dd\PSI
+\underline{\int_5}
\sin(\phi_1)^6
\cos(\phi_2)^2\sin(\phi_2)^2\cdot\\
&\cdot
\sin(\phi_3)^4
\cos(\phi_4)^2
\cos(\psi_1)^2\sin(\psi_1)^2
\sin(\psi_2)^2
\cos(\psi_3)^2
V_{5}(\PHI,\PSI)
\;\dd\PHI\dd\PSI
\\
J_{7}^{(2)}
&=\int_T \x_2^2\x_3\x_4\y_1^2\y_3\y_4\;\dd\sigma
=
\underline{\int_5}
\cos(\phi_1)^2\sin(\phi_1)^6
\cos(\phi_2)^2\sin(\phi_2)^4
\cos(\phi_3)^2\sin(\phi_3)^2\cdot\\
&\cdot
\cos(\phi_4)^2
\cos(\psi_1)^4
V_5(\PHI,\PSI)
\;\dd\PHI\dd\PSI
+\underline{\int_5}
\sin(\phi_1)^6
\cos(\phi_2)^4\sin(\phi_2)^2
\cos(\phi_3)^2\sin(\phi_3)^2\cdot\\
&\cdot
\cos(\phi_4)^2
\cos(\psi_1)^2\sin(\psi_1)^2
\cos(\psi_2)^2
V_5(\PHI,\PSI)
\;\dd\PHI\dd\PSI
-\underline{\int_5}
\sin(\phi_1)^6
\cos(\phi_2)^2\sin(\phi_2)^2\cdot\\
&\cdot
\cos(\phi_3)^2\sin(\phi_3)^2
\cos(\phi_4)^2
\cos(\psi_1)^2\sin(\psi_1)^2
\sin(\psi_2)^2
\cos(\psi_3)^2
V_5(\PHI,\PSI)
\;\dd\PHI\dd\PSI.
\end{align*}
In all of the formulas above the difference from the case $n\geq 6$
is that integration intervals for $\phi_4$ and $\psi_3$ are $[0,2\pi]$ instead of $[0,\pi]$.
However, since 
\begin{equation}
\label{replace-integration-interval}
    \frac
    {\int_0^{2\pi}\cos(\phi)^{2i}\sin(\phi)^{2j}\;\dd\phi}
    {\int_0^{2\pi}1\;\dd\phi}
    =
    \frac
    {\int_0^{\pi}\cos(\phi)^{2i}\sin(\phi)^{2j}\;\dd\phi}
    {\int_0^{\pi}1\;\dd\phi}
    \quad \text{for all }i,j\in \NN\cup\{0\},
\end{equation}
we can replace both intervals $[0,2\pi]$ with $[0,\pi]$.
Since the integrands are precisely as in the case $n\geq 6$ with $n=5$,
the formulas for $J_{7}$ from the case $n\geq 6$ hold also when applied to $n=5.$ 
\\

\noindent \textbf{Case 3:} $n=4$.
Note that the parameterizations of 
    $x_i(\PHI)$, $i=1,2,3$, 
and 
    $y_i(\PHI,\PSI)$, $i=1,2$,
are the same as in the case $n\geq 6$,
while
\begin{align*}
    x_4(\PHI)
    &=\sin(\phi_1)\sin(\phi_2)\sin(\phi_3),\\
y_3(\PHI,\psi_1,\psi_2)
&=\cos(\phi_1)\sin(\phi_2)\cos(\phi_3)\cos(\psi_1)
        +
        \cos(\phi_2)\cos(\phi_3)\sin(\psi_1)\cos(\psi_2)\\
        &\hspace{2cm}
        -
        \sin(\phi_3)\sin(\psi_1)\sin(\psi_2),\\
y_4(\PHI,\psi_1,\psi_2)
&=
\cos(\phi_1)\sin(\phi_2)\sin(\phi_3)\cos(\psi_1)
        +
        \cos(\phi_2)\sin(\phi_3)\sin(\psi_1)\cos(\psi_2) \\
        &\hspace{2cm}
        +
        \cos(\phi_3)\sin(\psi_1)\sin(\psi_2).
\end{align*}
So the computations of the integrals of monomials from Lemma \ref{integrals} containing at most 2 different indices remain the same as in the case $n\geq 6$.
The remaining formulas containing monomials with possibly more than two different indices
are $I_6, I_7,I_8, J_4, J_5, J_7,J_8,J_9$.
The arguments for 
$I_6=I_7=I_8=J_{8}=J_{9}=0$ 
are the same as in the case $n\geq 6$.
The argument for 
$J_4$ is direct computation.
We have:
\begin{align*}
&\int_T \z_1^2\z_2\z_3 \dd\sigma
    =
    \underline{\int_4}
    \cos(\phi_1)^4\sin(\phi_1)^{4}
    \cos(\phi_2)^2\sin(\phi_2)^2
    \cos(\phi_3)^2
    \cos(\psi_1)^4
    V_{4}(\PHI,\PSI)\;\dd\PHI\dd\PSI\\
    & 
    -\underline{\int_4}
    \cos(\phi_1)^2\sin(\phi_1)^{4}
    \cos(\phi_2)^2\sin(\phi_2)^2
    \cos(\phi_3)^2
    \cos(\psi_1)^2\sin(\psi_1)^2
    \cos(\psi_2)^2
    V_{4}(\PHI,\PSI)\;\dd\PHI\dd\PSI.
\end{align*}
By \eqref{replace-integration-interval},
the integration intervals for $\phi_3$ and $\psi_2$ 
can be replaced by $[0,\pi]$ instead of $[0,2\pi]$.
Since the integral is precisely as in the case $n\geq 6$ with $n=4$,
the formula for $J_4$ from the case $n\geq 6$
holds also when applied to $n=4.$ 
The argument for $J_5$ following the same formula as in the case $n\geq 6$ for $n=4$ is the following computation:
\begin{align*}
0
&=
\int_T (\z_1+\z_2+\z_3+\z_4)^4\; \dd\sigma=
\sum_{i=1}^4 \int_T \z_i^4\;\dd\sigma 
+4\sum_{i\neq j} \int_T \z_i^3\z_j\;\dd\sigma
+6\sum_{i<j} \int_T \z_i^2\z_j^2\;\dd\sigma\\
&+
12\sum_{
    \substack{   
    i,j,k \text{ pairw.}\\ \text{diff.}, j<k 
    }
} \int_T \z_i^2\z_j\z_k\;\dd\sigma
+
24
 \int_T \z_1\z_2\z_3\z_4\;\dd\sigma\\
&=
4J_1
+4\cdot 2\binom{4}{2}J_2
+6\binom{4}{2}J_3
+12\cdot 4\binom{3}{2}J_4
+24J_5.
\end{align*}
Using $J_1,J_2,J_3,J_4$ as stated in Lemma \ref{integrals} for $n=4$, we get 
$J_5=\frac{1}{9600}$, which is also as stated in Lemma \ref{integrals} for $n=4$. 

It remains to do direct computations for the value of $J_{7}$.
In the notation of case $n\geq 6$ we need to compute $J_{7}^{(1)}$ and $J_{7}^{(2)}$:
\begin{align*}
J_{7}^{(1)}
&=\int_T \x_2^2\x_4^2\y_1^2\y_3^2\;\dd\sigma
=
\underline{\int_4}
\cos(\phi_1)^2\sin(\phi_1)^6
\cos(\phi_2)^2\sin(\phi_2)^4
\cos(\phi_3)^2\sin(\phi_3)^2\cdot\\
&\cdot
\cos(\psi_1)^4
V_4(\PHI,\PSI)
\;\dd\PHI\dd\PSI
+\underline{\int_4}
\sin(\phi_1)^6
\cos(\phi_2)^4\sin(\phi_2)^2
\cos(\phi_3)^2\sin(\phi_3)^2\cdot\\
&\cdot
\cos(\psi_1)^2\sin(\psi_1)^2
\cos(\psi_2)^2
V_{4}(\PHI,\PSI)
\;\dd\PHI\dd\PSI
+\underline{\int_4}
\sin(\phi_1)^6
\cos(\phi_2)^2\sin(\phi_2)^2\cdot\\
&\cdot
\sin(\phi_3)^4
\cos(\psi_1)^2\sin(\psi_1)^2
\sin(\psi_2)^2
V_{4}(\PHI,\PSI)
\;\dd\PHI\dd\PSI
\\
J_{7}^{(2)}
&=\int_T \x_2^2\x_3\x_4\y_1^2\y_3\y_4\;\dd\sigma
=
\underline{\int_4}
\cos(\phi_1)^2\sin(\phi_1)^6
\cos(\phi_2)^2\sin(\phi_2)^4
\cos(\phi_3)^2\sin(\phi_3)^2\cdot\\
&\cdot
\cos(\psi_1)^4
V_4(\PHI,\PSI)
\;\dd\PHI\dd\PSI
+\underline{\int_4}
\sin(\phi_1)^6
\cos(\phi_2)^4\sin(\phi_2)^2
\cos(\phi_3)^2\sin(\phi_3)^2\cdot\\
&\cdot
\cos(\psi_1)^2\sin(\psi_1)^2
\cos(\psi_2)^2
V_4(\PHI,\PSI)
\;\dd\PHI\dd\PSI
-\underline{\int_4}
\sin(\phi_1)^6
\cos(\phi_2)^2\sin(\phi_2)^2\cdot\\
&\cdot
\cos(\phi_3)^2\sin(\phi_3)^2
\cos(\psi_1)^2\sin(\psi_1)^2
\sin(\psi_2)^2
V_4(\PHI,\PSI)
\;\dd\PHI\dd\PSI.
\end{align*}
By \eqref{replace-integration-interval},
the integration intervals for $\phi_3$ and $\psi_2$ 
can be replaced by $[0,\pi]$ instead of $[0,2\pi]$.
The difference in the integrands in comparison to the case $n\geq 6$
is that every summand lacks the $\cos(\phi_4)^2$ term, while some summands lack the $\cos(\psi_3)^2$ term. However, looking at
the computation for $n\geq 6$ both correspond to the term $B_{n-5,0}=\frac{1}{n-3}$.
For $n=4$ this term is equal to 1, so the final formula for $J_{7}$ is the same as in the case $n\geq 6$ when applied to $n=4.$ 
\\

\noindent \textbf{Case 4:} $n=3$.
Note that the parameterizations of 
    $x_1(\PHI)$, $x_2(\PHI)$ 
and
    $y_1(\PHI,\PSI)$
are the same as in the case $n\geq 6$,
while
\begin{align*}
    x_3(\phi_1,\phi_2)
    &=\sin(\phi_1)\sin(\phi_2),\\
    y_2(\phi_1,\phi_2,\psi_1)
    &=
        \cos(\phi_1)\cos(\phi_2)\cos(\psi_1)
        -\sin(\phi_2)\sin(\psi_1),\\
    y_3(\phi_1,\phi_2,\psi_1)
    &=
        \cos(\phi_1)\sin(\phi_2)\cos(\psi_1)
        +
        \cos(\phi_2)\sin(\psi_1).
\end{align*}
So the computations of the integrals of monomials from Lemma \ref{integrals}
containing one different index remain the same as in the case $n\geq 6$, i.e., 
$I_1$ and $J_1$ hold for $n=3$.
The arguments for $I_6=I_7=I_8=J_{8}=J_{9}=0$ are the same as in the case $n\geq 6$.
Assuming $I_2=-\frac{1}{2}I_1$, the arguments proving formulas $I_3,I_4,I_5$ are the same as in the case $n\geq 6$.
It remains to establish the formulas for $I_2,J_2,J_3,J_4,J_6$
by direct computations:
\begin{align*}
    I_2
    &=\int_T \z_1\z_2 \;\dd\sigma
    =
    -
    \underline{\int_3}
    \cos(\phi_1)^2\sin(\phi_1)^2\cos(\phi_2)^2\cos(\psi_1)^2 
    V_{3}(\phi_1)\;\dd\phi_1\dd\phi_2\dd\psi_1,\\
    J_2
    &=
    \int_T \z_1^3\z_2 \;d\sigma
    =
    -
    \underline{\int_3}
    \cos(\phi_1)^4\sin(\phi_1)^4
    \cos(\phi_2)^2
    \cos(\psi_1)^4
    V_{3}(\phi_1)\;\dd\phi_1\dd\phi_2\dd\psi_1
    \\
    J_3
    &=
    \int_T \z_1^2\z_2^2 \;\dd\sigma
    =
    \underline{\int_3}
    \cos(\phi_1)^4\sin(\phi_1)^4
    \cos(\phi_2)^4
    \cos(\psi_1)^4
    V_{3}(\phi_1)\;\dd\phi_1\dd\phi_2\dd\psi_1\\
    &+
    \underline{\int_3}
    \cos(\phi_1)^2\sin(\phi_1)^4
    \cos(\phi_2)^2\sin(\phi_2)^2
    \cos(\psi_1)^2\sin(\psi_1)^2
    V_{3}(\phi_1)\;\dd\phi_1\dd\phi_2\dd\psi_1
    \\
    J_4
    &=\int_T \z_1^2\z_2\z_3 d\sigma
    =
    \underline{\int_3}
    \cos(\phi_1)^4\sin(\phi_1)^4
    \cos(\phi_2)^2\sin(\phi_2)^2
    \cos(\psi_1)^4\cdot\\
    &\cdot
    V_{3}(\phi_1)\;\dd\phi_1\dd\phi_2\dd\psi_1-
    \underline{\int_3}
    \cos(\phi_1)^2\sin(\phi_1)^4
    \cos(\phi_2)^2\sin(\phi_2)^2
    \cos(\psi_1)^2\sin(\phi_1)^2\cdot\\
    &\cdot
    V_{3}(\phi_1)\;\dd\phi_1\dd\phi_2\dd\psi_1\\
    J_{6}^{(1)}
&=\int_T \x_2^4\y_1^4\;\dd\sigma
=
\underline{\int_3}
\sin(\phi_1)^8\cos(\phi_2)^4\cos(\psi_1)^4 
V_{3}(\phi_1)\;\dd\phi_1\dd\phi_2\dd\psi_1\\
J_6^{(2)}
&=\int_T \x_1\x_2^3\y_1^3\y_2\;\dd\sigma=
-
\underline{\int_3}
\cos(\phi_1)^2\sin(\phi_1)^6
\cos(\phi_2)^4
\cos(\psi_1)^4 V_{3}(\phi_1)\;\dd\phi_1\dd\phi_2\dd\psi_1,
\end{align*}
where we included only terms with nonzero integrals in the 
computations above.
By \eqref{replace-integration-interval},
the integration intervals for $\phi_2$ and $\psi_1$ 
can be replaced by $[0,\pi]$ instead of $[0,2\pi]$.
The integrands of $I_2$, $J_2$, $J_6^{(1)}$, $J_6^{(2)}$ are the same as in the case $n\geq 6$ and hence also the corresponding formulas when applied to $n=3$.
The difference in the integrands of $J_3$ and $J_4$
in comparison with the case $n\geq 6$
is that some summands lack at least one of the terms
$\cos(\phi_3)^2$ or $\cos(\psi_2)^2$.
Looking at
the computation for $n\geq 6$ these terms 
correspond to the factor $B_{n-4,0}=\frac{1}{n-2}$.
For $n=3$ this term is equal to 1.
So the final formulas for $J_3$ and $J_4$ 
are the same as in the case $n\geq 6$ when applied to $n=3$. Hence, also the formula for $J_6$ is the same when applied to $n=3$ by the argument as in the case $n\geq 6$.
\end{proof}

\subsubsection{Proof of Proposition \ref{bask-prod} for a symmetric bilinear form $g$}
\label{symmetric-case}
By the action
    $g(\x,\y)=g(U^{-1}\x,U^{-1}\y)$, $U\in SO(n)$, 
we can assume without loss of generality that $g$ is of the form
$$
g(\z)=
d_1 \z_1+d_2\z_2+\ldots+d_n\z_n,
\quad d_i\in \RR.
$$
Raising both sides of \eqref{reverse-holder} to the power of 4, we have to prove that
\begin{align}
\label{ineq-to-prove}
    \int_T\left(\sum_i d_i\z_i\right)^4\; \dd\sigma 
    \leq
    9\left(\int_T\left(\sum_i d_i\z_i\right)^2\; \dd\sigma\right)^2.
\end{align}
We can assume that
$\|g\|_2=1$:
\begin{align}
\label{2-norm-is-1}
\left(\sum_i d_i^2-\frac{2}{n-1}\sum_{i< j}d_id_j\right)I_1=1,
\end{align}
where we used that 
    $I_2=-\frac{1}{n-1}I_1$ (see Lemma \ref{integrals}).
Squaring \eqref{2-norm-is-1} we also have
\begin{align}
\label{2-norm-is-1-v2}
\begin{split}
&\sum_{i}d_i^4 
    -
    \frac{1}{n-1}
    \sum_{i\neq j}4d_i^3d_j 
    +
    2\left(1+\frac{2}{(n-1)^2}\right)
    \sum_{i<j} d_i^2d_j^2\\
    &\hspace{1cm}
    -
    \frac{4}{n-1}
    \left(
    1-\frac{2}{n-1}
    \right)
    \sum_{\substack{i,j,k\\ \text{pairwise}\\\text{different},\\j<k}}
        d_i^2d_jd_k
    +
    \frac{1}{(n-1)^2}
    \sum_{i<j<k<l}
        24d_id_jd_kd_l
        =\frac{1}{I_1^2}.
\end{split}
\end{align}
Using  $J_2=-\frac{1}{n-1}J_1$ (see Lemma \ref{integrals}), \eqref{2-norm-is-1} and \eqref{2-norm-is-1-v2} in
\eqref{ineq-to-prove}, the latter is equivalent to:
\begin{align*}
   0
   &\leq
   9
   -
   \left(\sum_{i}d_i^4 
    -
    \frac{\sum_{i\neq j}4d_i^3d_j}{n-1}
    \right)
    J_1
    -6
    \sum_{i<j} d_i^2d_j^2
    J_3
    -
    12\sum_{\substack{i,j,k\\ \text{pairwise}\\\text{different},\\j<k}}
        d_i^2d_jd_k J_4\\
    &\hspace{5cm}
    -
    \sum_{i<j<k<l}
        24d_id_jd_kd_l 
        \cdot
    \left\{
    \begin{array}{rr}
         J_5&   \text{if }n\geq 4\\[0.3em]
         0,&    \text{if }n=3
    \end{array}
    \right.\\
   &=
   9-\frac{J_1}{I_1^2}
    +
    \sum_{i<j} d_i^2d_j^2
    \Big(-6J_3+2J_1+\frac{4}{(n-1)^2}J_1\Big)\\
    &\hspace{2cm}
    +
    \sum_{\substack{i,j,k\\ \text{pairwise}\\\text{different},\\j<k}}
        d_i^2d_jd_k 
    \Big(-12
    J_4
    -\frac{4}{n-1}J_1
    +\frac{8}{(n-1)^2}J_1
    \Big)\\
    &\hspace{2cm}
    +
    \sum_{i<j<k<l}
        24d_id_jd_kd_l 
    \cdot
    \left\{
    \begin{array}{rr}
         -J_5+
    \frac{1}{(n-1)^2}J_1,
    &   \text{if }n\geq 4\\[0.3em]
         0,&    \text{if }n=3
    \end{array}
    \right.
    \\
    &=
   9
    -\frac{J_1}{I_1^2}+\frac{12}{(n-1)^2n(n+1)(n+4)(n+6)}\\
   &\hspace{2cm}
    \Big((n-3)\Big(\sum_{i<j} d_i^2d_j^2
   (n-2)
   -
    2\sum_{\substack{i,j,k\\ \text{pairwise}\\\text{different},\\j<k}}
        d_i^2d_jd_k 
    \Big)+
        12\sum_{i<j<k<l}d_id_jd_kd_l\Big).
\end{align*}
Since 
$$
\frac{J_1}{I_1^2}
=
\frac{9n^2(n+2)^2}{n(n+2)(n+4)(n+6)}
=
\frac{9n(n+2)}{(n+4)(n+6)}
\leq 9,
$$
it suffices to prove that 
\begin{align}
\label{ineq-to-prove-v3}
    (n-3)\Big(\sum_{i<j} d_i^2d_j^2
   (n-2)
   -
    2\sum_{\substack{i,j,k\\ \text{pairwise}\\\text{different},\\j<k}}
        d_i^2d_jd_k 
    \Big)+
        12\sum_{i<j<k<l}d_id_jd_kd_l
    \geq 0.
\end{align}
We will use induction on $n$ to verify \eqref{ineq-to-prove-v3}, starting with $n=3$. Clearly, for $n=3$ both terms are 0
and we have equality in \eqref{ineq-to-prove-v3}.
Let us assume \eqref{ineq-to-prove-v3} holds for some $n$ and prove it for $n+1$. 
Note that in all inequalities \eqref{ineq-to-prove}--\eqref{ineq-to-prove-v3}, the validity for one tuple $(d_1,\ldots,d_n)$ implies the validity for 
every tuple $(d_1+a,\ldots,d_n+a)$, where $a\in \RR$.
The reason for this is that $a(\z_1+\ldots+\z_n)\equiv 0$ on $T$ and hence 
$\int_T(\sum_i (d_i+a)z_i)^l\; \dd\sigma
=\int_T(\sum_i d_iz_i)^l\; \dd\sigma
$
for every $l\in \NN$.
Using this and the symmetry on indices of coefficients we can assume that
\begin{equation}
    \label{nonneg-assumpt-v2}
    d_1\geq d_2\geq \ldots \geq d_{n}\geq d_{n+1}=0.
\end{equation}
Using \eqref{nonneg-assumpt-v2} in \eqref{ineq-to-prove-v3}
it remains to prove
\begin{align}
\label{ineq-to-prove-v4}
    (n-2)\Big(\sum_{i<j\leq n} d_i^2d_j^2
   (n-1)
   -
    2\sum_{\substack{i,j,k\leq n\\ \text{pairwise}\\\text{different},\\j<k}}
        d_i^2d_jd_k 
    \Big)+
        12\sum_{i<j<k<l\leq n}d_id_jd_kd_l
    \geq 0.
\end{align}
We can rewrite \eqref{ineq-to-prove-v4} into
\begin{align}
\label{ineq-to-prove-v5}
\begin{split}
    &(n-3)\Big(\sum_{i<j\leq n} d_i^2d_j^2
   (n-2)
   -
    2\sum_{\substack{i,j,k\leq n\\ \text{pairwise}\\\text{different},\\j<k}}
        d_i^2d_jd_k 
    \Big)+
        12\sum_{i<j<k<l\leq n}d_id_jd_kd_l\\
    &\hspace{2cm}
    +
    2\Big(\sum_{i<j\leq n} d_i^2d_j^2
   (n-2)
   -
    \sum_{\substack{i,j,k\leq n\\ \text{pairwise}\\\text{different},\\j<k}}
        d_i^2d_jd_k 
    \Big)
    \geq 0.
\end{split}
\end{align}
In \eqref{ineq-to-prove-v5}, the first summand 
is nonnegative by the induction hypothesis, the second summand 
by \eqref{nonneg-assumpt-v2} and the third summand
by Muirhead's inequality \cite{Mui03}. Namely,
let
$(s_1,s_{2},s_3,\ldots,s_n)=(2,2,0,\ldots,0)$
and
$(t_1,t_{2},t_3,t_4,\ldots,t_n)=(2,1,1,0,\ldots,0)$.
Note 
$\sum_{i=1}^{n}s_i=\sum_{i=1}^n t_i$
and
$\sum_{i=1}^{k}s_i\geq \sum_{i=1}^k t_i$ for $k=1,\ldots,n-1$.
Since $d_i\geq 0$ for $i=1,\ldots,n$, \cite{Mui03} implies 
\begin{equation}
    \label{ineq:Muirhead}
    \sum_{\sigma\in S_n} d_1^{s_{\sigma(1)}}d_2^{s_{\sigma(2)}}\cdots d_n^{s_{\sigma(n)}}
    \geq 
    \sum_{\sigma\in S_n} d_1^{t_{\sigma(1)}}d_2^{t_{\sigma(2)}}\cdots d_n^{t_{\sigma(n)}},
\end{equation}
where $S_n$ stands for the symmetric group on $\{1,2,\ldots,n\}$.
Note \eqref{ineq:Muirhead} is equivalent to
\begin{equation*}
    \sum_{i<j\leq n}  2(n-2)!\cdot d_i^2d_j^2 
    \geq 
    \sum_{\substack{i,j,k\leq n\\ \text{pairwise}\\\text{different},\\j<k}}
        2(n-3)!\cdot d_i^2d_jd_k,
\end{equation*}
which implies the nonnegativity of the third summand in \eqref{ineq-to-prove-v5}.

It remains to prove the moreover part of Proposition \ref{bask-prod}
for symmetric bilinear forms,
i.e., the constant $\sqrt{3}$ in the inequality 
$\|g\|_4\leq \sqrt{3} \|g\|_2$ is asymptotically sharp as $n\to\infty$.
For $g(\z)=\frac{1}{\sqrt{I_1}}\z_1$ note
$$\|g\|_4=
\sqrt[4]{\frac{J_1}{I_1^2}}=
\sqrt[4]{
\frac{9n(n+2)}{(n+4)(n+6)}}\|g\|_2.
$$
As $n\to \infty$, we deduce that 
$\|g\|_4\to \sqrt{3}\|g\|_2$.
\qed


\subsubsection{Proof of Proposition \ref{bask-prod} for a skew-symmetric bilinear form $g$}
\label{skew-symmetric-case}
By the action
    $g(\x,\y)=g(U^{-1}\x,U^{-1}\y)$, $U\in SO(n)$, 
we can assume without loss of generality (see \cite[Corollary 2.5.11]{HJ13} that $g$ is of the form
$$
g(\w)=
a_{12}\w_{12}+a_{34}\w_{34}+\ldots+
a_{2\lfloor \frac{n}{2} \rfloor-1,2\lfloor \frac{n}{2} \rfloor}
\w_{2\lfloor \frac{n}{2} \rfloor-1,2\lfloor \frac{n}{2} \rfloor},
\quad 
a_{i,i+1}\in \RR.
$$
Raising both sides of \eqref{reverse-holder} to the power of 4, we have to prove that
\begin{align}
\label{ineq-to-prove-skew}
    \int_T
        \Big(\sum_{1\leq i\leq \lfloor \frac{n}{2} \rfloor} 
            a_{2i-1,2i} \w_{2i-1,2i}\Big)^4 \;\dd\sigma 
    \leq
    6
    \left(
    \int_T
        \Big(
        \sum_{1\leq i\leq \lfloor \frac{n}{2} \rfloor} 
            a_{2i-1,2i} \w_{2i-1,2i}
        \Big)^2 \;\dd\sigma
    \right)^2.
\end{align}
We can assume that
$\|g\|_2=1$:
\begin{align}
\label{2-norm-is-1-skew}
\begin{split}
&\int_T 
\Big( 
    \sum_{1\leq i,j\leq \lfloor \frac{n}{2} \rfloor}
        a_{2i-1,2i}a_{2j-1,2j} \w_{2i-1,2i}\w_{2j-1,2j}
\Big)
\;\dd\sigma\\
&=
\Big(\sum_{1\leq i\leq \lfloor \frac{n}{2} \rfloor} a_{2i-1,2i}^2\Big)I_5
+
2\Big(
\sum_{
    1\leq i<j\leq \lfloor \frac{n}{2} \rfloor,
    }
    a_{2i-1,2i}a_{2j-1,2j}
\Big)I_8\\
&=
\Big(\sum_{1\leq i\leq \lfloor \frac{n}{2} \rfloor} a_{2i-1,2i}^2\Big)I_5
=
1,
\end{split}
\end{align}
where we used Lemma \ref{integrals}.
Squaring \eqref{2-norm-is-1-skew} we also have
\begin{align}
\label{2-norm-is-1-v2-skew}
&
\sum_{1\leq i\leq \lfloor \frac{n}{2} \rfloor} a_{2i-1,2i}^4
+
2\sum_{1\leq i<j\leq \lfloor \frac{n}{2} \rfloor} a_{2i-1,2i}^2a_{2j-1,2j}^2
        =\frac{1}{I_5^2}.
\end{align}
Computing the left-hand side of \eqref{ineq-to-prove-skew} using Lemma \ref{integrals} we get
\begin{align}
    \label{LHS-ineq-skew}
    \begin{split}
    &\int_T 
    \Big(
    \sum_{1\leq i,j,k,l\leq \lfloor \frac{n}{2} \rfloor}
        a_{2i-1,2i}a_{2j-1,2j}a_{2k-1,2k}a_{2l-1,2l}
        \w_{2i-1,2i}\w_{2j-1,2j}\w_{2k-1,2k}\w_{2l-1,2l}
    \Big)\; \dd\sigma\\
    &=
    \sum_{1\leq i\leq \lfloor \frac{n}{2} \rfloor}
    a_{2i-1,2i}^4 J_6
    +
    6
    \sum_{
        1\leq i<j\leq \lfloor \frac{n}{2} \rfloor
        }
        a_{2i-1,2i}^2 a_{2j-1,2j}^2 J_{7}\\
    &=
    \Big(\sum_{1\leq i\leq \lfloor \frac{n}{2} \rfloor}
    a_{2i-1,2i}^4 
    +
    \sum_{
        1\leq i<j\leq \lfloor \frac{n}{2} \rfloor
        }
        a_{2i-1,2i}^2 a_{2j-1,2j}^2\Big) J_{6}.
    \end{split},
\end{align}
In the computation above we used the fact that all integrals 
$$\int_T \w_{2i-1,2i}\w_{2j-1,2j}\w_{2k-1,2k}\w_{2l-1,2l}\;\dd\sigma,$$
where at least one index appears an odd number of times, are equal to 0.
Using 
\eqref{2-norm-is-1-skew}, \eqref{2-norm-is-1-v2-skew} and \eqref{LHS-ineq-skew}
in
\eqref{ineq-to-prove-skew}, the latter is equivalent to:
\begin{align}
\label{ineq-to-prove-v2}
\begin{split}
   0
   &\leq
   6
   -
   \frac{J_6}{I_5^2}
   +
       \sum_{
        1\leq i<j\leq \lfloor \frac{n}{2} \rfloor
        }
        a_{2i-1,2i}^2 a_{2j-1,2j}^2
    J_6.
\end{split}
\end{align}
Since 
$$
\frac{J_6}{I_5^2}
=
\frac{24(n-1)^2n^2}{4(n-1)n(n+1)(n+2)}
=
\frac{6(n-1)n}{(n+1)(n+2)}
\leq 6,
$$
the inequality \eqref{ineq-to-prove-v2} clearly holds.

It remains to prove the moreover part of Proposition \ref{bask-prod}
for skew-symmetric bilinear forms,
i.e., the constant $\sqrt[4]{6}$ in the inequality 
$\|g\|_4\leq \sqrt[4]{6} \|g\|_2$ is asymptotically sharp as $n\to\infty$.
For $g(\w)=\frac{1}{\sqrt{I_5}}\w_{12}$, note
$$\|g\|_4=
\sqrt[4]{\frac{J_6}{I_5^2}}=
\sqrt[4]{
\frac{6(n-1)n}{(n+1)(n+2)}}\|g\|_2.
$$
As $n\to \infty$, we deduce that 
$\|g\|_4\to \sqrt[4]{6}\|g\|_2$.
\qed


\subsubsection{Proof of Proposition \ref{bask-prod} for a general bilinear form $g$}
\label{general-case}
    Let $g(\x,\y)=\x^T A\y+I_{1,1}\in \RR[\x,\y]_{1,1}/I_{1,1}$ be a bilinear form,
    where $A\in \RR^{n\times n}$. 
    We can write
    $$
        g(\x,\y)
        =
        \underbrace{\Big(\frac{\x^T(A+A^T)\y}{2}+I_{1,1}\Big)}_{g_s(\x,\y)}
        +
        \underbrace{\Big(\frac{\x^T(A-A^T)\y}{2}+I_{1,1}\Big)}_{g_a(\x,\y)},
    $$
    where 
        $g_s(\x,\y)$ and $g_a(\x,\y)$ 
    are symmetric and skew-symmetric bilinear forms, respectively.
    We can write $g_s$ and $g_a$ as follows:
    \begin{align*}
    g_s(\z)
    &=
    \underbrace{\sum_{i=1}^n a_i \z_{i}}_{g_{s,1}}
    +
    \underbrace{\sum_{1\leq i<j\leq n} b_{ij} (\z_{ij}+\z_{ji})}_{g_{s,2}},\quad
    a_i\in \RR, b_{ij} \in \RR,\\
    g_a(\z)
    &=
    \sum_{1\leq i<j\leq n} c_{ij} (\z_{ij}-\z_{ji}),\quad
    c_{ij} \in \RR.
    \end{align*}
    
    \noindent \textbf{Claim.} 
    $\langle g_s,g_a\rangle:=\int_T g_s g_a \;\dd\sigma
    =0.$ \\

    \noindent\textit{Proof of Claim.}
    Since $g_s=g_{s,1}+g_{s,2}$, it suffices to
    prove that 
    $$
    \langle g_{s,1},g_a\rangle
    =
    \langle g_{s,2},g_{a}\rangle
    =
    0.
    $$
    The fact that 
    $\langle g_{s,1},g_{a}\rangle=0$
    follows by observing that
    $\langle g_{s,1},g_{a}\rangle$
    is a weighted sum of inner products of the form
    $$
    \langle \z_i,(\z_{jk}- \z_{kj})\rangle
    =
    \int_T \z_{ii}\z_{jk} \dd\sigma - \int_T \z_{ii}\z_{kj} \dd\sigma.
    $$
    But the values of both terms are equal to $0$ by Lemma \ref{integrals}, since at least one of the indices $i,j,k$
    occurs an odd number of times.
    
    The fact that 
    $\langle g_{s,2},g_{a}\rangle=0$
    follows by noticing that 
    $\langle g_{s,2},g_{a}\rangle$ is a weighted sum  of inner products of the form
    \begin{align*}
    \langle (\z_{ij}+\z_{ji}),(\z_{kl}-\z_{lk})\rangle
    &=
    \langle \z_{ij},\z_{kl}\rangle
    -\langle \z_{ij},\z_{lk}\rangle
    +\langle \z_{ji},\z_{kl}\rangle
    -\langle \z_{ji},\z_{lk}\rangle\\
    &=
    \int_T \z_{ij}\z_{kl}\;\dd\sigma
    -
    \int_T \z_{ij}\z_{lk}\;\dd\sigma
    +
    \int_T \z_{ji}\z_{kl}\;\dd\sigma
    -
    \int_T \z_{ji}\z_{lk}\;\dd\sigma.
    \end{align*}
    If $(i,j)\neq (k,l)$, then in 
    $\z_{ij}\z_{kl}$, $\z_{ij}\z_{lk}$, $\z_{ji}\z_{kl}$, $\z_{ji}\z_{lk}$
    at least one of the indices $i,j,k,l$
    occurs an odd number of times and hence the 
    corresponding integral is equal to 0 by 
    Lemma \ref{integrals}.
    If $(i,j)=(k,l)$, then
    $\z_{ij}\z_{kl}=\z_{ij}^2$,
    $\z_{ij}\z_{lk}=\z_{ij}\z_{ji}$,
    $\z_{ji}\z_{kl}=\z_{ji}\z_{ij}$,
    $\z_{ji}\z_{lk}=\z_{ji}^2$,
    and hence
    \begin{align*}
    \langle (\z_{ij}+\z_{ji}),(\z_{ij}-\z_{ji})\rangle
    &=
    \int_T \z_{ij}^2\;\dd\sigma
    -
    \int_T \z_{ji}^2\;\dd\sigma
    =
    0.
    \end{align*}
    This proves the Claim.\hfill \ensuremath{\Box}\\ 

    We have
    \begin{align*}
    \|g\|_4
    &=
    \|g_{s}+g_a\|_4
    \leq
    \|g_{s}\|_4+\|g_a\|_4
    \leq \sqrt{3}\|g_s\|_2+\sqrt[4]{6}\|g_a\|_2\\
    &\leq \sqrt{3}\big(\|g_s\|_2+\|g_a\|_2\big)
    \leq \sqrt{6}\|g\|_2,
    \end{align*}
    where 
    in the first inequality we used the triangle inequality for $\|\cdot\|_4$,
    in the second the statement of Proposition \ref{bask-prod} for symmetric (resp.\ skew-symmetric) bilinear forms, in the third $\sqrt[4]{6}<\sqrt{3}$ and in the last 
    $\|g_s\|_2+\|g_a\|_2\leq \sqrt{2}\|g\|_2.$ 
    Indeed, by the Claim 
    \begin{equation}
        \label{eq-1709}
        \|g\|_2=\sqrt{\|g_a\|_2^2+\|g_{s}\|_2^2}.
    \end{equation}
    Further,
    \begin{align}
        \label{ineq-1709}
        \big(\|g_a\|_2+\|g_{s}\|_2\big)^2   
        &=
        \|g_a\|_2^2+2\|g_a\|\|g_s\|+\|g_{s}\|_2^2
        \leq 
        2\big(\|g_a\|_2^2+\|g_s\|_2^2\big),
    \end{align}
    where we used inequality between the arithmetic mean and the geometric mean.
    Using \eqref{eq-1709} in \eqref{ineq-1709} gives
    $\|g_s\|_2+\|g_a\|_2\leq \sqrt{2}\|g\|_2.$ 
\qed


\section{Algorithms and Examples}\label{sec:algo}

Each biquadratic form $f\in \RR[\x,\y]_{2, 2}$ that is
modulo $I$ a nonnegative polynomial but         not a sum of squares yields
  an example of a ``proper'' \crp map $A:M_n(\RR)\to M_n(\RR)$,
  cf.~Proposition 
  \ref{prop:obviousPQccp}.
In this section we specialize
the Blekher\-man-Smith-Velasco { procedure} (\cite[Procedure 3.3]{BSV}; see
also
\cite{KMSZ19} for a specialization in the context of positive linear maps)
to produce many such examples from random input data. Throughout this
section we fix $n\geq2$.

Observe that  biquadratic forms are in bijective correspondence
with quadratic forms on the Segre variety (cf.~\cite[Example 5.6]{BSV} or \cite[Lemma 3.13]{Vel15}). Let $\mathbb{P}^{n-1}$ denote the complex $(n-1)$-dimensional projective space and let
$                \sigma_{n}:\PP^{n-1}\times \PP^{n-1} \to \PP^{n^2-1},$ $
([x_1:\ldots:x_n], [y_1:\ldots:y_n]) \mapsto [x_1y_1:x_1y_2:\ldots:
x_1y_n:\ldots:x_ny_n]$
be the Segre embedding. Its image $\sigma _{n}(\PP^{n-1}\times
\PP^{n-1})=V_{\mathbb{C}}(I_{n})$ is the complex zero locus of the ideal
$I_{n}\subseteq \RR[\z]:=\RR[\z_{11},\z_{12},\ldots ,\z_{1n},\ldots
,\z_{nn}]$ generated by all $2\times 2$ minors of the matrix
$\left(\z_{ij}\right)_{i,j}$.
The complexification $I_n^{\mathbb{C}}\subseteq \CC[\z]$ of the ideal $I_{n}$ is radical \cite[p.\ 98]{Har92} and thus
consists of all polynomials vanishing on $\sigma_{n}(\PP^{n-1}\times
\PP^{n-1})$. It is also well known that $\sigma_{n}(\PP^{n-1}\times
\PP^{n-1})$ is smooth
  \cite[p.\ 184-185]{Har92} and of degree $\binom{2n-2}{n-1}$ 
\cite[p.\ 233]{Har92}.

Let $J\subseteq \RR[\z]$ be the ideal generated by $\sum_{i=1}^n \z_{ii}$, let $J_n=I_n+J$, and let $J^{\CC}$ and $J_n^{\CC}$ be complexifications of $J$ and $J_n$ in $\CC[\z]$.

\begin{lem}
The ideal $J_n^\CC$ is radical.
\end{lem}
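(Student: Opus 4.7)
My approach is to prove the stronger statement that $J_n^{\CC}$ is in fact a prime ideal. The key idea is to identify $\CC[\z]/J_n^{\CC}$ with a subring of a polynomial ring modulo a principal ideal generated by an irreducible element.

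First I will exploit the ring homomorphism
$$\Phi\colon \CC[\z]\to R:=\CC[x_1,\ldots,x_n,y_1,\ldots,y_n],\qquad \z_{ij}\mapsto x_iy_j,$$
that algebraically encodes the Segre embedding $\sigma_n$. Its kernel is the vanishing ideal of $\sigma_n(\PP^{n-1}\times\PP^{n-1})$, i.e.\ $I_n^{\CC}$, and its image is the subring $S:=\bigoplus_{d\geq 0} R_{d,d}$ of bihomogeneous polynomials of balanced bidegree $(d,d)$. Thus $\Phi$ descends to an isomorphism $\CC[\z]/I_n^{\CC}\xrightarrow{\sim}S$ that sends $\sum_i \z_{ii}$ to $x^Ty:=\sum_i x_iy_i$, and hence
$$\CC[\z]/J_n^{\CC}\;\cong\;S\big/\,(x^Ty)\cdot S.$$

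Next I will show that the natural map $S/(x^Ty)\cdot S\to R/(x^Ty)$ induced by the inclusion $S\hookrightarrow R$ is injective. This amounts to the equality of ideals $(x^Ty)\cap S = (x^Ty)\cdot S$ in $R$. The nontrivial containment is a multi-grading argument: if $x^Ty\cdot r\in S$ for some $r\in R$, decomposing $r=\sum r_{d_1,d_2}$ into bihomogeneous components yields $x^Ty\cdot r_{d_1,d_2}\in R_{d_1+1,d_2+1}$, and the requirement that the total sum lies in $S$ forces $r_{d_1,d_2}=0$ whenever $d_1\neq d_2$, so $r\in S$. This is the one spot that needs a (routine) verification and is the only mildly subtle step of the plan.

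To conclude, I will invoke that for $n\geq 2$ the polynomial $x^Ty$ is a non-degenerate quadratic form in at least four variables, hence irreducible in $R$; consequently $(x^Ty)$ is a prime ideal and $R/(x^Ty)$ is an integral domain. Being a subring of an integral domain, $\CC[\z]/J_n^{\CC}$ is itself a domain, so $J_n^{\CC}$ is prime and in particular radical, as claimed.
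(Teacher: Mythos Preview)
Your proof is correct and uses essentially the same ingredients as the paper's: the Segre homomorphism $\z_{ij}\mapsto x_iy_j$ identifying $\CC[\z]/I_n^\CC$ with the balanced-bidegree subring $S\subset R$, the primeness of $(x^Ty)$ in $R$, and the bidegree argument that $(x^Ty)r\in S\Rightarrow r\in S$. The paper runs these element-wise to show $J_n^\CC/I_n^\CC$ is radical in $\CC[\z]/I_n^\CC$ and then passes to the quotient, whereas you package the same facts as an embedding $\CC[\z]/J_n^\CC\hookrightarrow R/(x^Ty)$, which is a bit cleaner and yields the slightly stronger conclusion that $J_n^\CC$ is prime.
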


\begin{proof}
We first show that $J_n^{\CC}/I_n^{\CC}$ is a radical ideal in $\CC[\z]/I_n^{\CC}$. Let
$f\in \CC[\z]/I_n^{\CC}$ satisfy $f^2\in J_n^{\CC}/I_n^{\CC}$. Since $I_n^{\CC}$ is radical
ideal and $V_{\CC}(I_n)$ is the image of the Segre embedding, the Segre
embedding induces an injective homomorphism between coordinate rings
$\tilde\sigma_n^\#:\CC[\z]/I_n^{\CC}\to\CC[\x,\y]$ sending $\z_{ij}+I_n^{\CC}\mapsto
\x_i\y_j$. Clearly, $\tilde\sigma_n^\#(J_n^{\CC}/I_n^{\CC})\subseteq I^{\CC}$, so
$\left(\tilde\sigma_n^\#(f)\right)^2=\tilde\sigma_n^\#(f^2)\in I^{\CC}$.
Since $I^{\CC}$ is a prime ideal in $\CC[\x,\y]$, it follows that
$\tilde\sigma_n^\#(f)\in I^{\CC}$. Let
\begin{equation}\label{f=g.tr(z)}
\tilde\sigma_n^\#(f)=g\cdot \sum_{i=1}^n\x_i\y_i
\end{equation}
for some $g\in \CC[\x,\y]$. Since $\tilde\sigma_n^\#(f)$ lies in the
image of $\tilde\sigma_n^\#$, each of its monomials is of bidegree
$(d,d)$ for some $d$ (which depends on the monomial). Comparing the
monomials in (\ref{f=g.tr(z)}) we see that the same holds for $g$, i.e.,
$g\in \mathrm{im}\, \tilde\sigma_n^\#$. Let $h\in \CC[\z]/I_n^{\CC}$ satisfy
$\tilde\sigma_n^\#(h)=g$. Then (\ref{f=g.tr(z)}) implies
$\tilde\sigma_n^\#\left(f-h\cdot (\sum_{i=1}^n\z_{ii}+I_n^{\CC})\right)=0$,
and injectivity of $\tilde\sigma_n^\#$ implies that $f=h\cdot
(\sum_i\z_{ii}+I_n^{\CC})\in J_n^{\CC}/I_n^{\CC}$. So, $J_n^{\CC}/I_n^{\CC}$ is radical.

Finally, $\CC[\z]/I_n^{\CC}$ is a domain, and $J_n^{\CC}/I_n^{\CC}$ is a radical ideal.
It follows that $$\left(\CC[\z]/I_n^{\CC}\right)/\left(J_n^{\CC}/I_n^{\CC}\right)\cong
\CC[\z]/J_n^{\CC}$$ is reduced (without nilpotents), hence $J_n^{\CC}$ is a radical
ideal in $\CC[\z]$.
\end{proof}

Since $J_n^{\CC}$ is the homogeneous ideal of all polynomials that vanish on
$V_{\CC}(J_{n})$, the quotient ring $\CC[\z]/{J_n^{\CC}}$ is the coordinate ring
$\CC[V_{\CC}(J_{n})]$ of the variety $V_{\CC}(J_{n})$. The proof of the above lemma
shows that $\left(\tilde\sigma_n^\#\right)^{-1}(I^{\CC})\subseteq J_n^{\CC}/I_n^{\CC}$,
and the converse inclusion is obvious, therefore there is an induced
injective homomorphism $\sigma_{n}^\#:\CC[\z]/J_n^{\CC}\to \CC[\x,\y]/I^{\CC}$
satisfying $\sigma_{n}^\#(\z_{ij}+J_n^{\CC}) =\x_i\y_j+I^{\CC}$ for $1\le i,j\le
n$. The restriction of this homomorphism to the real quadratic forms is
then a (linear) bijective correspondence between quadratic forms from
$\RR[\z]/{J_{n}}$ and biforms from $\RR[\x,\y]_{2, 2}$ modulo $I$.

Recall from
Lemma \ref{lem:sos-bilin-modI} that a biform
$f\in \RR[\x,\y]_{2, 2}$
is a sum of squares modulo $I$ if and only if it is a sum of squares of biforms
from $\RR[\x,\y]_{1, 1}$ modulo $I$.

\begin{lemma} \label{lem:modI}
                A biform $f\in \RR[\x,\y]_{2, 2}$ of bidegree (2,2)
                is a sum of squares modulo $I$
                if and only if the quadratic form ${\sigma_{n}^\#}^{-1}(f)\in
\RR[\z]/J_{n}$ is a sum of squares.
\end{lemma}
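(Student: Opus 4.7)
The plan is to exploit the fact that $\sigma_{n}^\#$ is a (real) injective ring homomorphism whose restriction to linear forms in $\RR[\z]/J_n$ lands bijectively inside the bilinear biforms in $\RR[\x,\y]_{1,1}$ modulo $I$, while its restriction to quadratic forms in $\RR[\z]/J_n$ is the bijection with $\RR[\x,\y]_{2,2}$ modulo $I$ already established in the paragraph preceding the lemma. Both directions then reduce to bookkeeping with the homomorphism, combined with Lemma \ref{lem:sos-bilin-modI} to control the shape of the squares appearing in the sos decomposition modulo $I$.

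For the ($\Leftarrow$) direction, suppose ${\sigma_{n}^\#}^{-1}(f)=\sum_i \ell_i^{\,2}+J_n$ for linear forms $\ell_i\in\RR[\z]$. Setting $p_i\in\RR[\x,\y]_{1,1}$ to be a representative of $\sigma_{n}^\#(\ell_i+J_n)$, the homomorphism property gives
\[
f+I=\sigma_{n}^\#\Big(\textstyle\sum_i \ell_i^{\,2}+J_n\Big)=\sum_i (p_i+I)^2=\sum_i p_i^{\,2}+I,
\]
so $f-\sum_i p_i^{\,2}\in I$ and $f$ is a sum of squares modulo $I$.

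For the ($\Rightarrow$) direction, I would first apply Lemma \ref{lem:sos-bilin-modI} to rewrite $f=\sum_i p_i^{\,2}+q$ with $p_i\in\RR[\x,\y]_{1,1}$ and $q\in I$. Each bilinear form $p_i$ has the shape $\x^TA_i\y$ for some $A_i\in\RR^{n\times n}$, so I can lift it to the real linear form $\ell_i:=\sum_{j,k}(A_i)_{jk}\z_{jk}\in\RR[\z]$, which by the defining rule $\sigma_{n}^\#(\z_{jk}+J_n)=\x_j\y_k+I$ satisfies $\sigma_{n}^\#(\ell_i+J_n)=p_i+I$. Using the homomorphism property again,
\[
\sigma_{n}^\#\Big(\textstyle\sum_i \ell_i^{\,2}+J_n\Big)=\sum_i p_i^{\,2}+I=f+I=\sigma_{n}^\#\bigl({\sigma_{n}^\#}^{-1}(f)\bigr),
\]
and injectivity of $\sigma_{n}^\#$ forces ${\sigma_{n}^\#}^{-1}(f)=\sum_i \ell_i^{\,2}+J_n$, which is a sum of squares in $\RR[\z]/J_n$.

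The main obstacle is a subtle one hidden in the ($\Rightarrow$) direction: an a priori sos decomposition of $f$ modulo $I$ could involve squares of arbitrary polynomials rather than bilinear ones, and only the latter are in the image of $\sigma_{n}^\#$ at the right degree. This is precisely what Lemma \ref{lem:sos-bilin-modI} fixes, by showing that degree considerations together with the biquadratic nature of $f$ force any sos-modulo-$I$ representation to come from bilinear squares. Once that bridge is in place, the lifting of each bilinear $p_i$ to a real linear $\ell_i$ is transparent, and injectivity of $\sigma_{n}^\#$ finishes the argument.
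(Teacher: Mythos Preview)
Your proof is correct and follows essentially the same route as the paper: both directions use that $\sigma_{n}^\#$ is an injective ring homomorphism, and the $(\Rightarrow)$ direction relies on Lemma~\ref{lem:sos-bilin-modI} to reduce to squares of bilinear forms, which are exactly the elements in the image of $\sigma_{n}^\#$ at the right degree. The only cosmetic difference is that the paper lifts the ideal element $g\cdot\sum_i\x_i\y_i$ explicitly to ${\sigma_n^\#}^{-1}(g)\cdot\sum_i\z_{ii}\in J_n$, whereas you work modulo the ideals throughout and invoke injectivity at the end; these are equivalent formulations of the same argument.
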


\begin{proof}
        To prove the implication $(\Rightarrow)$ let
                \begin{equation}\label{exp-pt-1}
                        f=\sum_{j=1}^{j_0}  f_j^2+g\cdot \sum_{i=1}^n \x_i\y_i
                \end{equation}
        where $j_0\in \NN$, each $f_j\in\RR[\x,\y]_{1,1}$ and $g\in\RR[\x,\y]_{1,1}$.
	Note that all $f_j$ and $g$ 
        are in the image of $\sigma_{n}^\#$. Hence,
$${\sigma_{n}^\#}^{-1}(f)=\sum_{j=1}^{j_0}{\sigma_{n}^\#}^{-1}(f_j)^2+{\sigma_n^\#}^{-1}(g)\cdot
	\sum_{i=1}^n\z_{ii}$$
        is a sum of squares in $\RR[\z]/J_n$.

        It remains to prove the implication
        $(\Leftarrow)$. Since $f$ is in the image of $\sigma_{n}^\#$, it
follows from
                $${\sigma_{n}^\#}^{-1}(f)=\sum_{j=1}^{j_1} [h_j]^2,$$
        where $j_1\in \NN$ and $[h_i]$ is the equivalence class of $h_j\in
\RR[\z]$ in $\RR[\z]/J_{n}$, that
                $$f=\sum_{j=1}^{j_1} \sigma_{n}^\#([h_j])^2$$
        is a sum of squares in $\RR[\x,\y]/I$
        which proves $(\Leftarrow)$.
\end{proof}

        \begin{prop}\label{prop:degCodim}
        The variety $V_{\CC}(J_n)$ is smooth and is a nondegenerate subvariety of
$V_{\CC}\left(\sum_{i=1}^n\z_{ii}\right)$,
                $\dim V_{\CC}(J_n)=2n-3$, its codimension in the hyperplane
$V_{\CC}\left(\sum_{i=1}^n\z_{ii}\right)$ is $(n-1)^2$,
        and the degree of $V_{\CC}(J_n)$ is $\binom{2n-2}{n-1}$.
        \end{prop}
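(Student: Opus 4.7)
The plan is to realize $V_\CC(J_n)$ as the hyperplane section $V_\CC(I_n)\cap H$, where $H:=V_\CC(\sum_{i=1}^n\z_{ii})$, and to transfer each claim through the Segre embedding to the bihomogeneous hypersurface
\[
W:=\{([x],[y])\in\PP^{n-1}\times\PP^{n-1}\mid x^Ty=0\}.
\]
The first observation is that the point $E_{11}=\sigma_n([e_1],[e_1])\in V_\CC(I_n)$ satisfies $\sum_i\z_{ii}(E_{11})=1\neq 0$, so $V_\CC(I_n)\not\subseteq H$. Combining this with the proof of the preceding lemma (which gave the injection $\sigma_n^\#:\CC[\z]/J_n^\CC\hookrightarrow \CC[\x,\y]/I^\CC$), we see that via $\sigma_n$ the variety $V_\CC(J_n)$ is isomorphic to $W$.

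For the dimension claim, $W$ is a hypersurface in the irreducible $(2n-2)$-dimensional variety $\PP^{n-1}\times\PP^{n-1}$, so Krull's principal ideal theorem yields $\dim V_\CC(J_n)=\dim W=2n-3$. Since $\dim H=n^2-2$, the codimension of $V_\CC(J_n)$ in $H$ is $(n^2-2)-(2n-3)=(n-1)^2$. For smoothness I would compute the partial derivatives of $x^Ty$ on $\PP^{n-1}\times\PP^{n-1}$: they are $y_1,\dots,y_n$ in the $x$-variables and $x_1,\dots,x_n$ in the $y$-variables, and they cannot vanish simultaneously at any point $([x],[y])\in\PP^{n-1}\times\PP^{n-1}$. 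Hence $W$ is smooth, and so is $V_\CC(J_n)$.

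For nondegeneracy of $V_\CC(J_n)$ in $H$, I would verify that $(J_n^\CC)_1=\CC\cdot\sum_i\z_{ii}$. Since $I_n^\CC$ is generated by the $2\times 2$ minors of $(\z_{ij})$, which are quadratic, we have $(I_n^\CC)_1=0$ (equivalently, the Segre variety is nondegenerate in $\PP^{n^2-1}$). Therefore $(J_n^\CC)_1=(I_n^\CC)_1+\CC\cdot\sum_i\z_{ii}=\CC\cdot\sum_i\z_{ii}$, which is precisely the one-dimensional space of linear forms vanishing on $H$; equivalently, $V_\CC(J_n)$ is not contained in any proper linear subspace of $H$. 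Finally, the degree claim follows from the standard fact that a hyperplane section of an irreducible projective variety by a hyperplane not containing it preserves degree: since $V_\CC(I_n)$ is irreducible of degree $\binom{2n-2}{n-1}$ and $V_\CC(I_n)\not\subseteq H$, we get $\deg V_\CC(J_n)=\binom{2n-2}{n-1}$.

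The proof is essentially a bookkeeping exercise once one identifies $V_\CC(J_n)$ with $W$; the only mild technical points are justifying that the Segre isomorphism transports smoothness and irreducibility faithfully, and invoking the correct degree-preservation statement for (not necessarily generic) hyperplane sections of irreducible varieties.
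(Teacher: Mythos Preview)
Your proof is correct but follows a different route from the paper. You transfer everything through the Segre embedding to the bihomogeneous hypersurface $W=\{x^Ty=0\}\subset\PP^{n-1}\times\PP^{n-1}$: dimension comes from Krull's principal ideal theorem, smoothness from the nonvanishing of the partials $(y_1,\dots,y_n,x_1,\dots,x_n)$, nondegeneracy from the observation that $(J_n^\CC)_1=\CC\cdot\sum_i\z_{ii}$ (using that $J_n^\CC$ is radical), and degree from preservation under a hyperplane section of the irreducible Segre variety. The paper instead works directly with $V_\CC(J_n)$ as the variety of rank-one traceless matrices: it computes the Hilbert polynomial of $\CC[\z]/J_n^\CC$ explicitly to read off dimension and degree simultaneously, proves smoothness via the transitive $\GL_n$-action by conjugation together with the Jacobian criterion at the single point $e_1e_2^T$, and checks nondegeneracy by a direct linear-algebra argument (if $\Tr(xy^TM)=0$ for all orthogonal $x,y$ then $M$ is forced to be a scalar traceless matrix, hence zero). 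Your approach is more conceptual and reuses standard facts about Segre varieties and hyperplane sections; the paper's approach is more self-contained, avoiding any appeal to degree preservation under hyperplane sections, and the Hilbert polynomial computation gives both numerical invariants in one stroke.
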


\begin{proof}

Note that $V_{\CC}(J_n)$ is (the projectivization of) the variety of all $n\times n$ matrices of rank 1
and trace 0. Suppose it is contained in a hyperplane of
$V_{\CC}(\sum_{i=1}^n\z_{ii})$. Then there exists a nonzero traceless matrix $M$
such that $\Tr(xy^TM)=y^TMx=0$ for all $x,y\in \CC^n$ satisfying
$y^Tx=0$. Taking $x=\e_i,y=\e_j$ for arbitrary distinct $i$ and $j$ we
get that $M$ is diagonal. Furthermore, taking $x=\e_i+\e_j,y=\e_i-\e_j$
for distinct $i$ and $j$, we get that $M$ is a scalar matrix. Since $\Tr
M=0$, it follows that $M=0$, which is a contradiction. Therefore
$V_{\CC}(J_n)$ is nondegenerate.\looseness=-1

Next, we compute the Hilbert polynomial for $V_{\CC}(J_n)$. We follow the
proof of the analogous result for the Segre variety in
\cite[p.~234]{Har92}. The space of polynomials of degree $d$ in
$\CC[\z]/J_n^{\CC}$ is isomorphic, via the restriction of the homomorphism
$\sigma_n^\#$, to the space of polynomials of bidegree $(d,d)$ in
$\CC[\x,\y]/I^{\CC}$. Its dimension is therefore
$${n+d-1\choose n-1}^2-{n+d-2\choose n-1}^2=\left(\frac{(d+1)\cdots
(n+d-2)}{(n-1)!}\right)^2(n-1)(n+2d-1).$$
This is a polynomial in $d$ with the leading term
$\frac{2n-2}{((n-1)!)^2}d^{2n-3}$, therefore $\dim V_{\CC}(J_n)=2n-3$ and
$\deg V_{\CC}(J_n)=\frac{(2n-2)(2n-3)!}{((n-1)!)^2}={2n-2\choose n-1}$. As
$V_{\CC}(\sum_{i=1}^n\z_{ii})$ is a hyperplane in $\mathbb{P}^{n^2-1}$, the result on codimension also follows.

It remains to prove smoothness. Note that the group $\GL_n$ acts on the
variety $V_{\CC}(J_n)$ of rank 1 traceless matrices by conjugation. Using the
Jordan normal form we see that the action is transitive, so it suffices
to prove that $\e_1\e_2^T$ is a smooth point of $V_{\CC}(J_n)$. To show this
we use the Jacobian criterion. Let $\Jac(\e_1\e_2^T)$ be the Jacobian
matrix for $V_{\CC}(J_n)$ at $\e_1\e_2^T$. The generators of the ideal $J_n^{\CC}$
are $\z_{ij}\z_{kl}-\z_{il}\z_{kj}$ where $i\ne k$ and $j\ne l$, and
$\sum_{i=1}^n\z_{ii}$. The gradient of $\z_{ij}\z_{kl}-\z_{il}\z_{kj}$ in
$\e_1\e_2^T$ is zero if $i\ne 1$ and $k\ne 1$ or if $j\ne 2$ and $l\ne
2$. On the other hand, the gradient of $\z_{12}\z_{kl}-\z_{1l}\z_{k2}$
in $\e_1\e_2^T$ is $\e_k\e_l^T$ for $k\ne 1$ and $l\ne 2$, and the gradient
of $\sum_{i=1}^n\z_{ii}$ is $\sum_{i=1}^ne_ie_i^T$. Clearly,
$\sum_{i=1}^ne_ie_i^T\in\CC^{n^2}$ is not a linear combination of $e_ke_l^T$ with $k\ne 1$ and $l\ne 2$, so $\Rank
\Jac(\e_1\e_2^T)=(n-1)^2+1$ and $\dim \ker\Jac(\e_1\e_2^T)=2n-2$. It
follows that the (projective) tangent space to $V_{\CC}(J_n)$ at $\e_1\e_2^T$
is $(2n-3)$-dimensional, which shows that $\e_1\e_2^T$ is smooth.
\end{proof}

        \begin{cor}\label{cor:minDeg}
        For $n\geq3$ the variety $V_{\CC}(J_n)$ is not of minimal degree,
        i.e., $\deg V_{\CC}(J_n)>1+\codim V_{\CC}(J_n).$
        \end{cor}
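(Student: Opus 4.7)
The plan is to reduce the corollary to a single elementary arithmetic inequality using the numerical data from Proposition~\ref{prop:degCodim}. That proposition gives $\deg V_{\CC}(J_n) = \binom{2n-2}{n-1}$, and since $V_{\CC}(J_n)$ is nondegenerate in the hyperplane $V_{\CC}(\sum_{i=1}^n \z_{ii})$, its linear span is precisely this hyperplane, making its codimension (in the linear span) equal to $(n-1)^2$. Therefore the statement to verify becomes
\[
\binom{2n-2}{n-1} \;>\; 1 + (n-1)^2 \qquad (n \geq 3).
\]

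To bound the central binomial coefficient from below, I will use the well-known fact that $k \mapsto \binom{2n-2}{k}$ is maximized at $k = n-1$, which for $n\geq 3$ gives the crude estimate $\binom{2n-2}{n-1} \geq \binom{2n-2}{2} = (n-1)(2n-3)$. A direct subtraction then yields
\[
(n-1)(2n-3) - \bigl(1 + (n-1)^2\bigr) \;=\; n^2 - 3n + 1 \;=\; n(n-3) + 1 \;\geq\; 1,
\]
completing the argument. (For the base case $n=3$ one may also just verify $\binom{4}{2} = 6 > 5 = 1 + 2^2$ by inspection.)

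Since the corollary reduces to this short arithmetic check once the degree and codimension formulas of Proposition~\ref{prop:degCodim} are granted, there is no genuine obstacle; the entire content lies in the preceding structural result. The only mild subtlety is remembering that the relevant ambient space for the notion of minimal degree is the linear span of $V_{\CC}(J_n)$ rather than $\PP^{n^2-1}$, which is handled by the nondegeneracy clause of Proposition~\ref{prop:degCodim}.
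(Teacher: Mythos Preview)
Your proof is correct and follows exactly the approach the paper intends: the corollary is stated without proof there, as an immediate consequence of the formulas $\deg V_{\CC}(J_n)=\binom{2n-2}{n-1}$ and $\codim V_{\CC}(J_n)=(n-1)^2$ from Proposition~\ref{prop:degCodim}. Your explicit arithmetic verification via $\binom{2n-2}{n-1}\geq\binom{2n-2}{2}=(n-1)(2n-3)$ and the subtraction $(n-1)(2n-3)-(1+(n-1)^2)=n^2-3n+1\geq 1$ for $n\geq 3$ simply fills in the routine check the paper omits.
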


  We write
        \begin{eqnarray*}
                \Pos(V(J_{n})) &=& \left\{f\in \RR[\z]/J_{n}\colon f(z)\geq 0\quad
\text{for all } z \in V(J_{n})
                        \right\},\\
                \Sq(V(J_{n})) &=& \left\{f\in \RR[\z]/J_{n}\colon f=\sum_{i=1}^k f_i^2\quad
\text{for some }k\in\NN\text{ and }
                        f_i \in \RR[\z]/J_{n} \right\},
        \end{eqnarray*}
for the cone of nonnegative polynomials and the cone of sums of squares
from $\RR[\z]/J_{n}$, respectively.

  For $n\ge 3$, \cite[Procedure 3.3]{BSV} yields an explicit
construction of nonnegative quadratic forms from $\RR[\z]/{J_{n}}$ that
are not sums of squares forms
  starting from random input data. We now 
{turn this procedure,
specialized to our context, into a probabilistic 
(Las Vegas)
algorithm.}

\begin{algorithm}\label{algo}\rm
        Let $n\ge 3$, $d=2n-3=\dim V(J_n)$, and $e=(n-1)^2=\codim V(J_n).$
         To obtain a quadratic form in $ \Pos(V(J_{n}))\setminus
\Sq(V(J_{n}))$ proceed as follows:

        \begin{enumerate}[label={\rm Step \arabic*}]
        \item\label{it:1} Construction of linear forms $h_0,\ldots, h_d$.
        \begin{enumerate}[label={\rm Step 1.\arabic*}]
                \item\label{it:1.1} 
                        Choose $e+1$ random pairwise orthogonal $x^{(i)}\in \RR^n$ and
$y^{(i)} \in \RR^n$ and calculate their Kronecker tensor products
                                $z^{(i)}=x^{(i)}\otimes y^{(i)}\in \RR^{n^2}$.
                \item\label{it:1.2} 
                        Choose $d$ random vectors $v_1,\ldots v_d \in \RR^{n^2}$ from the
kernel of the matrix
                        $$\begin{pmatrix} z^{(1)} & \ldots & z^{(e+1)} \end{pmatrix}^\ast,$$
                        and form  the linear forms $$h_j(\z)=v_j^\ast\cdot \z  \in
\RR[\z]\quad \text{for } j=1,\ldots,d.$$
                        If the number of points in the intersection
                                $
                                        \ker(
                                        \begin{pmatrix}
                                        v_1 & \ldots & v_d
                                        \end{pmatrix}^\ast)
                                        \bigcap V(J_{n})
                                $
                        is not equal to $\deg (V_{\CC}(J_{n}))=\binom{2n-2}{n-1}$ or if the points
in the intersection are not in linearly general position, then repeat
\ref{it:1.1}.
                \item\label{it:1.3}
                        Choose a random vector $v_0$ from the kernel of the matrix
                        $$\begin{pmatrix} z^{(1)} & \ldots & z^{(e)} \end{pmatrix}^\ast.$$
                        (Note $z^{(e+1)}$ is omitted.)
                        The corresponding linear form $h_0$ is
                                $$h_0(\z)=v_0^\ast\cdot \z \in \RR[\z].$$
                        If $h_0$ intersects  $h_1$, $\ldots$, $h_d$ in more than
                        $e$ points on $V(J_{n})$, then repeat \ref{it:1.3}.\looseness=-1
\end{enumerate}

\smallskip
                \noindent Let $\mathfrak a$ be the ideal in $\RR[\z]/{J_{n}}$
generated by $h_0,h_1,\ldots,h_{d}$.

                \smallskip
        \item\label{it:2} Construction of a quadratic form $f\in
\left(\RR[\z]/{J_{n}}\right)\setminus \mathfrak a^2$.
        \begin{enumerate}[label={\rm Step 2.\arabic*}]

                \item\label{it:2.1}
                Let $g_1(\z),\ldots ,g_{\binom{n}{2}^2}(\z)$ be the generators of the
ideal $I_{n}$, i.e., the $2\times 2$ minors $\z_{ij}\z_{kl}-\z_{il}\z_{kj}$
for $1\le i<k\le n,1\le j<l\le n$.
Set $g_0=\sum_{i=1}^n \z_{ii}$.
                For each $i=1,\ldots ,e$ compute a basis $\{w_1^{(i)},\ldots
,w_{d+1}^{(i)}\}\subseteq \RR^{n^2}$ of the kernel of the matrix
                $$\begin{pmatrix}
                \nabla g_0(z^{(i)})^* \\ \vdots \\ \nabla g_{\binom{n}{2}^2}(z^{(i)})^*
                \end{pmatrix}.$$
                (Note that this kernel is always $(d+1)$-dimensional, since the
variety $V_{\CC}(J_{n})$ is $d$-dimensional (in $\PP^{n^2-1}$) and smooth.)

                \item\label{it:2.2}
Choose a random vector $v\in \RR^{n^4}$ from
the intersection of the
                kernels of the matrices
                $$\begin{pmatrix}
                z^{(i)} \otimes w_1^{(i)} & \cdots & z^{(i)}\otimes w_{d+1}^{(i)}
                        \end{pmatrix}^*\quad \text{for }i=1,\ldots,e
                $$
                with the kernels of the matrices
                $$\begin{pmatrix}
                \e_i\otimes \e_j-\e_j\otimes \e_i
                \end{pmatrix}^*\quad \text{for } 1\le i<j\le n^2.$$
                (The latter condition ensures $v$ is a symmetric tensor in
$\RR^{n^2}\otimes \RR^{n^2}$. Note also the point $z^{(e+1)}$ is omitted.)

                For $1\leq i,k\leq n$ and $1\leq j,l\leq n$      denote
                $$E_{ijkl}=(\e_i\otimes \e_j)\otimes (\e_k\otimes \e_l)+(\e_k\otimes
\e_l)\otimes (\e_i\otimes \e_j)\in \mathbb{R}^{n^4}.$$
                        If $v$ is in
                                \begin{align*}\Span&\Big(\left\{v_i\otimes v_j+v_j\otimes v_i\colon 0\leq i\leq
j\leq d\right\}\\&\bigcup
                                \left\{E_{ijkl}-E_{ilkj};1\leq i<k\leq n,1\leq j<l\leq n\right\}\\
			&\bigcup\left\{\sum_{i=1}^n((\e_i\otimes \e_i)\otimes (\e_j\otimes \e_k)+(\e_j\otimes \e_k)\otimes (\e_i\otimes \e_i)); 1\le j,k\le n\right\}\Big),\end{align*}
                        then repeat {\rm\ref{it:2.2}}.
                        Otherwise the  quadratic form
                                        $$f(\z)=v^\ast\cdot (\z\otimes\z)\in \RR[\z]/J_{n},$$
                        does not belong to $\mathfrak a^2.$
        \end{enumerate}

        \item\label{it:3} Construction of a quadratic form in
$\mathbb{R}[\z]/J_{n}$
         that is nonnegative  but not sos.\\[1mm]
         Calculate the greatest $\delta_0>0$ such that
                                $\delta_0 f+\sum_{i=0}^dh_i^2$
                        is nonnegative on $V(J_{n})$. Then for every $0<\delta\leq\delta_0$
                        the quadratic form
                                \beq\label{eq:Fdelta}F_\delta=\delta f+\sum_{i=0}^dh_i^2\eeq
                        is nonnegative on $V(J_{n})$ but is not a sum of squares.
        \end{enumerate}
\end{algorithm}

\subsection{Correctness of Algorithm \ref{algo}}
The main ingredient in the proof is the theory of minimal degree
varieties as developed in \cite{BSV}.
Since
$V_{\CC}(J_n)$ is not of minimal degree for $n\geq 3$
by Proposition \ref{prop:degCodim}, $\Sq(V(J_{n}))\subsetneq
\Pos(V(J_{n}))$. Hence results of \cite[Section 3]{BSV} apply; their
Procedure 3.3 adapted to our set-up is Algorithm \ref{algo}. While
\ref{it:1} and \ref{it:3} follow immediately from the corresponding
steps in \cite[Procedure 3.3]{BSV}, we note for \ref{it:2} that
``vanishing to the second order at $z^{(i)}$'' means $f(z^{(i)})=0$ and
$\nabla f(z^{(i)})\in \Span\left\{\nabla g_j(z^{(i)})\colon 0\leq j\leq
\binom{n}{2}^2\right\}$.
Since $f\not \in \mathfrak a^2$,
the quadratic form $\delta f+\sum _{i=0}^dh_i^2$ is never a sum of
squares,  while it is nonnegative on $V(J_{n})$ for sufficiently small
$\delta >0$ by the positive definiteness of the Hessian of $\sum
_{i=0}^dh_i^2$ at its real zeros $z^{(1)},\ldots ,z^{(e)}$, see the
proof of the correctness of Procedure 3.3 in \cite{BSV}. 

\subsection{{Towards an implementation}}\label{ssec:implement}
{Note that
the verification in \ref{it:1.2} is computationally difficult
(but can be performed
using Gr\"obner basis if $n$ is small). However, since
all steps in the algorithm are performed with random data, all the
generic conditions from \cite[Procedure 3.3]{BSV} are satisfied with
probability 1. Hence, 
by omitting the verification in \ref{it:1.2}, 
Algorithm \ref{algo} 
becomes a Monte Carlo algorithm and
yields a correct output with probability 1}. \looseness=-1
\ref{it:1} and  \ref{it:2} are easily implemented as
they only require linear algebra.
On the other hand, \ref{it:3} is computationally difficult; testing
nonnegativity even of low degree polynomials is NP-hard, cf.~\cite{LNQY09}.

Several algorithms are available to check nonnegativity of polynomials.
Those using symbolic methods such as
quantifier elimination or cylindrical algebraic decomposition only work
for small problem sizes. We employ numerical methods based on the
Positivstellensatz \cite{BCR98}. To reduce the number of equality
constraints, we rewrite the quadratic form $F_\delta(\z)$ from
\eqref{eq:Fdelta}
into $\x,\y$ variables, obtaining a biquadratic form
we denote by a slight abuse of notation by $F_\delta(\x,\y)$. 

\begin{prop}\label{prop:psatz}
For $f\in\RR[\x,\y]$ the following are equivalent:
\begin{enumerate}[label={\rm(\roman*)}]
\item\label{it:p1} $f\geq0$ on $V(I)$;
\item\label{it:p2} there exist sum of squares
$\sigma_1,\sigma_2\in\RR[\x,\y]$ such that
\beq\label{eq:psatz}\sigma_1f-\sigma_2\in I \quad\text{ and }\quad
\sigma_1\not\equiv0 \text{ on }V(I).\eeq
\end{enumerate}
\end{prop}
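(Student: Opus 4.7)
The plan is to handle the two directions separately, with the easy implication (ii)$\Rightarrow$(i) following from continuity and irreducibility of $V(I)$, and the nontrivial direction (i)$\Rightarrow$(ii) reducing to the classical Krivine--Stengle Positivstellensatz.

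For (ii)$\Rightarrow$(i): from $\sigma_1 f-\sigma_2\in I$ I read off the pointwise identity $\sigma_1(x,y)f(x,y)=\sigma_2(x,y)$ on $V(I)$. Both $\sigma_i$ are sums of squares, hence nonnegative, so on the set $U:=\{(x,y)\in V(I):\sigma_1(x,y)>0\}$ we have $f=\sigma_2/\sigma_1\geq 0$. Because $I=(\y^T\x)$ is real radical and $V(I)$ is an irreducible real variety of dimension $2n-1$ (discussion after Lemma \ref{lem:CX+XC^T}), the set $V(I)\cap V(\sigma_1)$ is a proper Zariski-closed subset and hence $U$ is Euclidean-dense in $V(I)$. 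Continuity of $f$ then upgrades $f\geq 0$ from $U$ to all of $V(I)$.

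For (i)$\Rightarrow$(ii): I would invoke the Positivstellensatz \cite[Theorem 4.4.2]{BCR98} applied to the empty preordering together with the ideal $I$. This produces sums of squares $p,q\in\RR[\x,\y]$, an integer $m\geq 0$, and $h\in I$ with
\[
pf=f^{2m}+q+h.
\]
The key observation is that $f^{2m}+q=(f^m)^2+q$ is itself a sum of squares. Setting $\sigma_1:=p$ and $\sigma_2:=(f^m)^2+q$ therefore gives $\sigma_1 f-\sigma_2\in I$, which is the desired congruence.

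The only remaining point is to arrange $\sigma_1\not\equiv 0$ on $V(I)$. If $f$ vanishes identically on $V(I)$, then $f\in I$ by real radicality and the trivial pair $(\sigma_1,\sigma_2)=(1,0)$ does the job. Otherwise, assume for contradiction that $\sigma_1\equiv 0$ on $V(I)$; then $\sigma_2=\sigma_1 f-(\sigma_1 f-\sigma_2)$ also vanishes on $V(I)$. Since $\sigma_2$ is a sum of squares of real polynomials, each summand must vanish on $V(I)$: when $m=0$ this yields $1\equiv 0$ on $V(I)$, which is absurd, and when $m\geq 1$ it forces $f\equiv 0$ on $V(I)$, contradicting the standing assumption. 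I do not anticipate any genuine obstacle here; the main content is choosing the correct formulation of the Positivstellensatz so that the "denominator" $\sigma_1$ and the exponent term $f^{2m}$ can be repackaged into sums of squares, together with the small bookkeeping argument that $\sigma_1$ cannot vanish identically on $V(I)$.
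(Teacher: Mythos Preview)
Your proof is correct and follows essentially the same route as the paper: the easy direction uses that $V(I)$ is irreducible so the set where $\sigma_1$ does not vanish is Euclidean-dense, and the converse invokes the Positivstellensatz to obtain $pf=f^{2m}+q+h$ with $h\in I$, then absorbs $f^{2m}$ into the sum-of-squares numerator and handles the degenerate case $\sigma_1\equiv 0$ on $V(I)$ via the real radical property of $I$. Your case split (treating $f\equiv 0$ on $V(I)$ first, then deriving a contradiction) is only a cosmetic reorganization of the paper's argument, which instead shows directly that $\sigma_1\equiv 0$ on $V(I)$ forces $f\in I$.
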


\begin{proof}
Assume \ref{it:p2} holds.
 From \eqref{eq:psatz} it follows that $f\geq0$
on $S=V(I)\setminus V(\sigma_1)$. 
Since $V(I)$ is irreducible,
$S$ is Zariski dense in $V(I)$. Since $S$ is also open in $V(I)$, it is dense in $V(I)$ also in the Euclidean topology and hence \ref{it:p1}
holds. Conversely,
suppose \ref{it:p1} holds. By the Positivstellensatz
(e.g.~\cite[Corollary 4.4.3]{BCR98}), there is $m\in\NN$ and
sums of squares $\sigma_1,\sigma_2$ such that
$f\sigma_1-\sigma_2-f^{2m}\in I$.
Assume $\sigma_1=0$ on $V(I)$, then $\sigma_1\in I$
since $I$ is the vanishing ideal of $V(I)$ (see \S \ref{sec:prelim}), 
whence $\sigma_2+f^{2m}\in I$. Thus, again by
the real radical property of $I$, $f\in I$. In this case we may simply pick
$\sigma_1=1$ and $\sigma_2=0$.
\end{proof}

We apply Proposition \ref{prop:psatz} to $F_\delta$ from
\eqref{eq:Fdelta} to search for a $\delta>0$ making $F_\delta(\x,\y)$
nonnegative on $V(I)$.
Let $\delta>0$ be fixed and suppose
the degree of $\sigma_1$ is $\leq2d$. Then the ideal
constraint in \eqref{eq:psatz} immediately converts
into a linear matrix inequality and thus feeds
into a semidefinite program (SDP) that can be solved with standard
solvers \cite{WSV00}.
(Here homogeneity of $F_\delta$ and $I$ enter.
Both $\sigma_j$ can be assumed to be homogeneous, and
$\deg\sigma_2\leq2d+4$.)
To implement the non-equality
constraint in \eqref{eq:psatz}, we pick a random point $(x_0,y_0)\in
V(I)$ and set $\sigma_1(x_0,y_0)=1$.
Our implementation
uses bisection,
sets $d=1$ and
starts with, say, $\delta=1$. Then solve the described feasibility SDP.
If it has a solution, stop.
If not, replace $\delta$ by $\delta/2$ and try again.
If no solution has been found with $\delta$ greater
than some prescribed tolerance, increase $d$, and reset
$\delta=1$. Then repeat the process.
By Proposition \ref{prop:psatz} and the construction
of $F_\delta$ the algorithm will eventually produce
a certificate of nonnegativity for some $\delta>0$.
We refer to \cite{Abi} for a numerical comparison of
polynomial optimization choices for a similar problem.

As in \cite{KMSZ19} (see also \cite{Abi})
it might be possible to apply standard techniques
\cite{PP08,CKP15} to turn obtained {numerical}
sum of squares
certificates into symbolic ones.

\subsection{Example}
In this section we give an explicit numerical example
of a ``proper'' \crp map $\widetilde \Phi:M_3(\RR)\to M_3(\RR)$
built using our {ad-hoc implementation of} Algorithm \ref{algo} {in Wolfram Mathematica}.
Working with rational random data as per Algorithm \ref{algo} quickly leads to very large denominators with bad conditioning, necessitating working with floating point numbers.
Let
\begin{multline*}
p_\Phi(\x,\y)=
75.356\ \x_1^2 \y_2^2+35.3881\ \x_1^2 \y_3^2-65.2694\ \x_1^2
    \y_2 \y_3+89.2972\ \x_2 \x_1 \y_2^2\\
    -19.9103\ \x_3 \x_1
    \y_2^2 +96.593\ \x_2 \x_1 \y_3^2-47.7404\ \x_3 \x_1
    \y_3^2-80.1036\ \x_2 \x_1 \y_2 \y_3\\ +56.4942\ \x_3 \x_1 \y_2
    \y_3+37.6343\ \x_2^2 \y_1+6.96833\ \x_3^2
    \y_1^2+17.7278\ \x_2 \x_3 \y_1^2\\+38.8145\ \x_2^2
    \y_2^2+23.0293\ \x_3^2 \y_2^2+37.1699\ \x_2 \x_3
    \y_2^2+66.6118\ \x_2^2 \y_3^2\\+22.9845\ \x_3^2
    \y_3^2-66.1642\ \x_2 \x_3 \y_3^2-2.03483\ \x_2^2 \y_1
    \y_2+25.0232\ \x_3^2 \y_1 \y_2\\+35.2335\ \x_2 \x_3 \y_1
    \y_2+1.70127\ \x_2^2 \y_1 \y_3-32.1772\ \x_3^2 \y_1
    \y_3-33.3246\ \x_2 \x_3 \y_1 \y_3\\+9.37496\ \x_2^2 \y_2
    \y_3-41.4656\ \x_3^2 \y_2 \y_3+11.4857\ \x_2 \x_3 \y_2 \y_3.
    \end{multline*}
The corresponding linear map $\Phi:\sa3\to\sa3$
is as follows:
{\small
\[
\Phi(E_{11})=
\begin{bmatrix}
  0. & 0. & 0. \\
  0. & 75.356 & -32.6347 \\
  0. & -32.6347 & 35.3881 \\
  \end{bmatrix},
  \quad
  \Phi(E_{22})=
\begin{bmatrix}
37.6343 & -1.01742 & 0.850636 \\
  -1.01742 & 38.8145 & 4.68748 \\
  0.850636 & 4.68748 & 66.6118
  \end{bmatrix},
  \]
  \[
  \Phi(E_{33})=
\begin{bmatrix}
6.96833 & 12.5116 & -16.0886 \\
  12.5116 & 23.0293 & -20.7328 \\
  -16.0886 & -20.7328 & 22.9845
  \end{bmatrix},
\quad
\Phi(E_{12}+E_{21})=
\begin{bmatrix}
0. & 0. & 0. \\
  0. & 89.2972 & -40.0518 \\
  0. & -40.0518 & 96.593
  \end{bmatrix},
  \]
  \[
\Phi(E_{13}+E_{31})=
  \begin{bmatrix}
  0. & 0. & 0. \\
  0. & -19.9103 & 28.2471 \\
  0. & 28.2471 & -47.7404
  \end{bmatrix},
\quad
  \Phi(E_{23}+E_{32})=
\begin{bmatrix}
     17.7278 & 17.6168 & -16.6623 \\
  17.6168 & 37.1699 & 5.74284 \\
  -16.6623 & 5.74284 & -66.1642
   \end{bmatrix}.
\]
}
The polynomial $p_\Phi$ is nonnegative
on $V(I)$
  but not a sum of squares modulo $I$. Equivalently, 
  an arbitrary $*$-linear extension
  $\widetilde \Phi:M_3(\RR)\to M_3(\RR)$ of $\Phi$ is
a proper \crp map, e.g., $\widetilde \Phi$ is trivial on antisymmetric matrices.
This example was produced using Algorithm \ref{algo}
starting with the points
\[
\begin{bmatrix} x^{(1)} & y^{(1)} \\[1mm]
x^{(2)} & y^{(2)}\\[1mm]
x^{(3)} & y^{(3)}\\[1mm]
x^{(4)} & y^{(4)}\\[1mm]
x^{(5)} & y^{(5)}
\end{bmatrix}=
\left[
\begin{array}{rrr|rrr}
-\frac{3}{2} & 1 & \frac{3}{2} & -\frac{21}{2} &
    -\frac{3}{2} & -\frac{19}{2} \\[1mm]
  \frac{1}{3} & 0 & -3 & -24 & 9 & -\frac{8}{3} \\[1mm]
  1 & -1 & -\frac{2}{3} & \frac{14}{3} & -\frac{2}{3}
    & 8 \\[1mm]
  2 & -1 & \frac{1}{2} & -4 & \frac{9}{2} & 25 \\[1mm]
  -\frac{3}{2} & \frac{3}{2} & -\frac{3}{2} &
    \frac{3}{2} & \frac{3}{2} & 0 \\
    \end{array}
\right],
\]
where each $x^{(i)},y^{(i)}\in\RR^3$.

\section*{Acknowledgments}
The authors thank the anonymous referees and the editors 
for their careful reading and insightful comments that have greatly improved this manuscript.

\end{document}